\newcommand {\tb}{\textbf}
\newcommand {\mb}{\mathbb}
\newcommand {\Z}{\mb Z}
\newcommand {\R}{\mb R}
\newcommand {\F}{\mb F}
\newcommand {\C}{\mb C}
\newcommand {\Q}{\mb Q}
\newcommand {\Hp}{\mb H}
\newcommand {\colim}{\textrm{colim}\ }
\newcommand {\lra}{\longrightarrow}
\newcommand {\ad}{\mathrm{ad}}
\newcounter{parentnumber}
\newtheorem{thm}{Theorem}[section]
\newtheorem{cor}[thm]{Corollary}
\newtheorem{defn}[thm]{Definition}
\newtheorem{prop}[thm]{Proposition}
\newtheorem{lmm}[thm]{Lemma}
\newtheorem{exm}[thm]{Example}
\newtheorem{rem}[thm]{Remark}
\begin{document}

\title{Splitting Madsen-Tillmann spectra I. Twisted transfer maps}

\author{Takuji Kashiwabara
Institut Fourier, CNRS UMR \textup{5582}, Universit\'e de Grenoble I,\\
38402 St Martin d'H\`eres cedex France\\
\textit{takuji.kashiwabara@ujf-grenoble.fr}
\and
Hadi Zare
\thanks{The second author has been supported in part by IPM Grant No. 92550117. He also acknowledges partial support from the University of Tehran.}
\\
School of Mathematics, Statistics, and Computer Science, College of Science,\\
University of Tehran, Tehran, Iran \textup{14174}\\
School of Mathematics, Institute for Research in Fundamental Sciences (IPM), P.O. Box: 19395-5746, Niyavaran, Tehran, Iran\\
\textit{hadi.zare@ut.ac.ir}
}

\maketitle

\date{}

\abstract{
\textcolor{black}{We record various properties of twisted Becker-Gottlieb transfer maps and study their multiplicative properties analogous to Becker-Gottlieb transfer. We show these twisted transfer maps factorise through Becker-Schultz-Mann-Miller-Miller transfer; some of these might be well known. We apply this to show that $BSO(2n+1)_+$ splits off $MTO(2n)$, which after localisation away from $2$, refines to a homotopy equivalence $MTO(2n)\simeq BO(2n)_+$ as well as $MTO(2n+1)\simeq *$ for all $n\geqslant0$. This reduces the  study of $MTO(n)$ to the $2$-localised case.   At the prime $2$ our splitting allows to identify some algebraically independent classes in mod $2$ cohomology of $\Omega^\infty MTO(2n)$. We also show that $BG_+$ splits off $MTK$ for some pairs $(G,K)$ at appropriate set of primes $p$, and investigate the consequences for characteristic classes, including algebraic independence and non-divisibility of some universally defined characteristic classes, generalizing results of Ebert and Randal-Williams.}

\tableofcontents

\section{Introduction and statement of results}
For ${\mathbf K}=O, U, SO, SU, Sp, Pin, \textrm{or }Spin,$ the Madsen-Tillmann spectrum $MT{\mathbf K}(n)$ (\cite{GMTW}) is
defined to be $B{\mathbf K}(n)^{-\gamma_n}$, the Thom spectrum of $-\gamma_n$ where {
$\gamma_{\textcolor{black}{n}} $ is the \hypertarget{gamma}{canonical 
bundle  the classifying space for $n$-dimensional $G$-vector bundles $B{\mathbf K}(n)$}} \textcolor{black}{(see Appendix \ref{sec:rec} for notes on classifying spaces)}; the dimension is understood to be over $\C$ in the cases of $U,SU$, and over $\Hp$ in the case of ${\mathbf K}=Sp$. Since the associated infinite loop space $\Omega^\infty MT{\mathbf K}(n)$ classifies fibre bundles
whose fibres are
homeomorphic to an $n$-dimensional manifold with $G$-structure (see e.\,g.\, \cite{Eb} for a nice account), the cohomology ring
$H^*(\Omega^\infty MT{\mathbf K}(n);R)$ then contains the $R$-characteristic classes for such bundles, where
$R$ is some relevant ring. It is known that $H^*(\Omega^\infty _0MT{\mathbf K}(n);\mathbb{Q})$ is just a free commutative
algebra generated by  $H^{*>0}(MT{\mathbf K}(n);\mathbb{Q})$ \opt{sc}{(see Proposition \ref{hinfrational})}. As the torsion-free quotient of  $H^{*>0}(MT{\mathbf K}(n);\mathbb{Z})$ injects to $H^{*>0}(MT{\mathbf K}(n);\mathbb{Q})$, this gives us a good knowledge of the torsion-free quotient of  $H^{*>0}(MT{\mathbf K}(n);\mathbb{Z})$.  To understand
the remaining torsion part, we need to
{know} $\Z/p$-coefficient case,
{which} seems rather difficult. In fact, 
{for $p=2$}, the only existing computations in the literature are due to Galatius and Randal-Williams; they have shown that there exist short exact sequences of Hopf algebras
$$ H_*(\Omega^\infty_0 MT{\mathbf K}(n);\Z /2)\lra H_*(Q_0B{\mathbf K}(n)_+;\Z /2)\lra H_*(\Omega^\infty_0 MT{\mathbf K}(n-1);\Z /2)$$
where ${\mathbf K}=SO$ with $n=2$ (equivalently with  ${\mathbf K}=U$ and $n=1$) \cite[Theorem 1.3]{G}, and ${\mathbf K}=O$
with $n=1,2$ \cite[Theorem A, Theorem B]{Ra}. Here, $Q$ {denotes }$\Omega^\infty\Sigma^\infty$ and the {subscript} $0$ corresponds to the base point component of the
{associated infinite} loop space. The maps { are induced by maps
in the cofibration of spectra} 
below: \cite[Proposition 3.1]{GMTW} \textcolor{black}{(see also Lemma \ref{cofibreoftransfer})}
$$MT{\mathbf K}(n)\stackrel{\omega}{\lra} B{\mathbf K}(n)_+\stackrel{{t}}{\lra} MT{\mathbf K}(n-1).$$
Here  {$\omega$ is the Thomification of the  inclusion \hyperlink{gamma}{${-\gamma _n}$}$\to(-\gamma _n)\oplus\gamma _n$, and $t$ denotes the Becker-Schultz-Mann-Miller-Miller transfer discussed in Section \ref{BSMMM}}.}
The case for ${\mathbf K}=Spin$ with $n=2$ has been treated in \cite[Therorems 1.2, 1.3, 1.7]{Gspin}, the results don't allow
{such a simple
 description.}

At odd primes, as far as we are aware, aside from some degenerate cases, the only computation is due to Galatius for the case of ${\mathbf K}=U$, $n=1$
\cite[Theorem 1.4]{G}. It is therefore of interest for people working in the field to proceed with further computations, or at least identify nontrivial torsion classes in (co-)homology of $\Omega^\infty MT{\mathbf K}(n)$.
We are interested in splitting these spectra, so that some more familiar pieces could be
identified which consequently tell us about pieces of cohomology rings $H^*(\Omega^\infty MT{\mathbf K}(n);\Z/p)$. We shall use standard
methods of stable homotopy theory, which in this paper is mainly based on using various transfer maps, and Steinberg idempotent
as well as the Whitehead conjecture in a sequel \cite{KZ}.

{Now we summarize our main results.  In many cases, we only sketch them, the detailed statement
can be found in the relevant sections.
}

\textcolor{black}{We begin by recording an observation on the twisted Becker-Gottlieb transfer map which
{are probably}
 known to experts, but we don't know of any published account.}

\begin{thm}\label{twistedtransfer}
\begin{enumerate}
 \item (Theorem \ref{twistedtransfer}) For a fibre bundle $\pi:E\to B$ over a ``nice space'' $B$ with fibre a compact manifold, and a vector bundle
$\zeta$ over $B$, one can construct the ``twisted Becker-Gottlieb transfer $t_\pi^\zeta:B^\zeta\lra E^{\pi^*\zeta}$ enjoying similar
properties as the usual Becker-Gottlieb transfer.  Notably the composition
$$B^\zeta\stackrel{t_\pi^\zeta}{\to}E^{\pi^*\zeta}\stackrel{\pi^{{\zeta}}}{\to} B^\zeta$$
induces multiplication by $\chi(F)$ in ordinary homology.
\item (Theorem \ref{transcomp}) For a compact Lie group $G$ and its closed subgroup  $K$, the twisted Becker-Gottlieb transfer factors through
the Becker-Schultz-Mann-Miller-Miller transfer (see Subsection \ref{BSMMM}).
\end{enumerate}
\end{thm}

\textcolor{black}{Before proceeding further, we fix one \textbf{important terminology}. For spaces, we have two distinct notions of {\it splitting}.  We say, when $X\cong Y\times Z$,  {that} $Y$ splits off $X$ (as a direct factor).  As we identify a space with its suspension spectra, we also say, when $\Sigma ^{\infty}
X\cong \Sigma ^{\infty}Y\vee \Sigma ^{\infty}Z'$, that $Y$ splits off $X$ (as a stable wedge summand).  It is easy to see that the first implies the second. Sometimes, we use the same word splitting for two notions,
{the meaning being} clear from the context.}

\hypertarget{conv1}{}
{\opt{short}{Note that the infinite loop space functor $\Omega ^{\infty}$ commutes with the localisation. Thus as}
\opt{sc}{As} our main applications concern mod $p$ (co)homology for given prime $p$, there is no loss of information
by localising at $p$.  Thus from now on we identify a spectrum $X$ with its localisation at the prime $p$
\opt{sc}{(c.f. subsection \ref{sec:local})} unless
otherwise specified. As we
work mainly in the category of spectra, we also identify a (pointed) space $X$ with its suspension spectrum.}

Thus \opt{sc}{by Lemma \ref{lm:split} we get:} \opt{short}{the above implies:}
\begin{cor}\label{cor:splitgeneral}
 \begin{enumerate}
 \item  Let $F\to E\to B$ be as above. If $\chi(F)$ is prime to $p$,
then\opt{local}{localised at $p$}, $B^\zeta$ splits off $E^{\pi^*\zeta}$.
\item Let $(G,K)$ be as above.  If $\chi(G/K)$ is prime to $p$, then $BG^{\alpha}$ splits off $BK^{\alpha|_K \oplus \ad_K-\ad_G|_K}$.
\end{enumerate}

\end{cor}

\textcolor{black}{Corollary \ref{cor:splitgeneral} (ii) is an important tool in proving some of our main splitting results, upon various choices of $\pi:K\to G$ and $\alpha$. Our results below, provide a list of such examples, where the main task is to identify $BK^{\alpha|_K\oplus \ad_K-\ad_G|_K}$ as a Madsen-Tillmann spectrum.}

\begin{thm}
\begin{enumerate}
 \item (Theorems \ref{thsplitmainallprestated}, \ref{thsplitmainusu}) Let $G$, $K$, $p$ be as in Theorems \ref{thsplitmainallprestated} (i), (ii),  or \ref{thsplitmainusu}.
Then the Madsen-Tillmann spectra $MTK$ splits off $BG_+$.
\item (Lemma \ref{MTO(n)-p-odd}) If the prime $p$ is odd, then we have $$MTO(2n)\simeq BO(2n)_+,\ MTO(2n-1)\simeq *.$$
\end{enumerate}
\end{thm}


Thus we have, at odd primes,
$$MTO(2n)\simeq BSO(2n+1)_+\simeq BO(2n)_+\simeq BO(2n+1)_+\simeq BSp(n)_+,$$
where the
\opt{local}{odd primary} {equivalences $BSO(2n+1)_+\simeq BO(2n)_+\simeq BO(2n+1)_+\simeq BSp(n)_+$ are classic. }



\textcolor{black}{Splitting of a spectrum $E$ into a wedge, say $\bigvee_i E_i$, implies that the infinite loop space $\Omega^\infty E$ decomposes as a product of infinite loop spaces $\Omega^\infty E_i$\opt{sc}{ (see Lemma \ref{splittingloopspaeofwedge})}. Thus,} we have the following:

\begin{cor}\label{infsplit}
Let $(K,G)$ and $p$ be as in one of the above theorems.  Then, {as infinite loop spaces, $\Omega ^{\infty}MTK$ decomposes as a product of $QBG_+$ and
{another factor.}}
\end{cor}


Next, notice that for any \textcolor{black}{pointed} space $X$, we have $\Sigma ^{\infty }(X_+)\cong \Sigma ^{\infty }(X)\vee S^0$. {
Thus if $BG_+$ splits $MTK$,
Then so does $S^0$.}
At the level of infinite loop spaces, this implies that {$QS^0$ splits off $\Omega^\infty MTK$}.
%
This splitting however, can also be obtained by {another method. That is,
the Madsen-Tillmann-Weiss map allows us to split $S^0$ from slightly wider class of Madsen-Tillmann spectra, including $MTSp(n)$'s.
Thus:}


\begin{thm}[(Theorem \ref{splitbyMTW})]\label{splitbyMTWintro}
Suppose there exists a manifold $M$ with $K$-structure.  Then $S^0$ splits off $MTK$ at a prime $p$ if $p$ doesn't divide
$\chi (M)$.

Concrete examples are given in the statement of Theorem \ref{splitbyMTW}.
\end{thm}
We note that 
{by either  method}
the map from $MTK$ to $S^0$ is obtained by the composition
$$MTK\stackrel{\omega}{\to}BK_+\stackrel{c}{\to}S^0,$$
where $\omega$ is the Thomification of the  inclusion $-\gamma\to(-\gamma)\oplus\gamma$,
{$\gamma$ denoting the appropriate universal bundle over $BK$,} $c$ is the ``collapse'' map, that is the map that sends the base point to the base point, all the rest to the other point in $S^0$.

At the relevant primes, {Theorem \ref{splitbyMTW} implies
that ${\pi_*}MTK_{\opt{local}{(p)}}$} contains the $\pi _*(S^0)_{\opt{local}{(p)}}$, the stable homotopy groups of the sphere
as a summand.  
It {also implies} that \textcolor{black}{$H
{^*}(\Omega^\infty_0MTK;\Z/p)$  contains a copy of $H{^*}(Q_0S^0;\Z/p)$ as a tensor factor}.
{Thus all non-trivial characteristic classes in $H{^*}(Q_0S^0;\Z/p)$ are non-trivial in
 $H^*(\Omega^\infty MT{\mathbf K}(n);\Z/p)$.  Thus we can generalize
\cite[Theorem 6.1]{Ra}, or rather \cite[Lemma 6.3]{Ra}, as we are not pulling back the characteristic classes to moduli spaces,
and show:}
%
\begin{cor}[(Corollary \ref{charclass1})]\label{Intro-charclass1}
Let $K$ be as in Corollary \ref{charclass1}. Then the composition
$$ MTK\stackrel{\omega}{\lra} BK_+\stackrel{c}{\lra} S^0\stackrel{i}{\lra} KO,$$
where
$i$ is the unit map, induces an injection in mod $2$ cohomology of infinite loop spaces
$$H^*(\Z \times BO;\Z/2)\hookrightarrow  H^*(\Omega^{\infty }MTK;\Z/2).$$
Thus if we define the  class $\xi _i\in H^*(\Omega^{\infty }_0MTK;\Z/2)$ by
$$\xi _i=(\omega \circ c \circ i)^*(w_i), $$ then
they are algebraically independent.

\end{cor}
Let  $F\lra E\stackrel{\pi}{\lra} B$  be a manifold (with suitable structure) bundle, with the Madsen-Tillmann-Weiss map
$f_{\textcolor{black}{\pi}}:B\rightarrow \Omega ^{\infty} _0MTK$.  One can define the characteristic class $\xi _i(E)$ of this bundle simply as the pull-back
$\xi _i(E)=f^*_{\textcolor{black}{\pi}}(\xi _i)$.
Note that as in \cite[Theorem 6.2]{Ra}, one can give a more geometrical interpretation of these characteristic classes, with
the equality $\xi _i(E)=w_i({KO^*(t_f)(1))}$
and
{$KO^*(t_f)(1)$ is the virtual bundle given} by $\Sigma (-1)^i[H^i(F_b,\R)]$ (\cite[Theorem 6.1]{BS}).

Note that in the case of $MTO(2)$, we have, $\tau (\xi _i) = \chi _i$ where the $\tau $ is the conjugation of the Hopf algebra
$H^*(\Omega ^{\infty} _0MTO(2))$, where $\chi _i$'s are defined in \cite[Theorem C]{Ra}.  This is because $w_i(V)$ and
$w_i(-V)$ are related by the conjugation of the Hopf algebra $H^*(BO)$, and the maps of Hopf algebra respect the conjugation.

{The complex analogue of the above, using the Chern classes, also hold, that is, if we use $KU$ instead of
$KO$ and $c_{i(p-1)}$ instead of $w_i$ in the above to define $\xi _i^{\C }$, then we have:}
\begin{cor}[(Corollary \ref{charclass2})]\label{Intro-charclass2}
Let $K$ and $p$ be as in {Theorem \ref{splitbyMTW}}.
The classes $\xi _i^{\C }\in H^*(\Omega ^{\infty}(MTW;\Z/p)$'s are algebraically independent.
\end{cor}
Again we can interpret the characteristic class $\xi _i$ geometrically as before, using appropriate Chern classes and
$KU$-cohomology instead of Stiefel-Whitney classes and $KO$-cohomology.

Another family of characteristic classes, arising from the cohomology of the classifying space $BG$, are discussed in \cite[Subection 2.4]{Ra}.

\begin{defn}\label{def-univchar}
A universally defined characteristic class is an element in the image of the map
$$H^*(BK;R)\stackrel{{\sigma^\infty}^{{*}}}{\lra} H^*(Q_0(BK_+);R)\stackrel{\omega ^*}{\lra} H^*(\Omega ^{\infty}_0MTK;R).$$
We write $\overline{\nu}_c$ for the image of $c\in H^*(BK;R)$ in $H^*(\Omega ^{\infty}_0MTK;R).$
For a manifold bundle $\textcolor{black}{F\lra E\stackrel{\pi}{\lra}B}$ with $K$ structure \textcolor{black}{ on $F$} classified by the Madsen-Tillmann-Weiss map $\textcolor{black}{f_\pi}:B\to \Omega ^{\infty}_0MTK$, $\overline{\nu}_c(E)$ is defined by $\overline{\nu}_c(E)=\textcolor{black}{f_\pi}^*(\overline{\nu}_c)\in H^*(E;R)$.
\end{defn}

This includes Wahl's $\zeta$ classes, Randal-Williams' $\mu$-classes, and the Miller-Morita-Mumford
$\kappa$ classes, we will come back to this later.
{The arguments as in the proof of  \cite[Theorem 2.4]{Ra} show that
this definition agrees with the usual one.
}The method 
of \cite[Example 2.6]{Ra} gives some relations among them.
Our splitting theorem can be used to show that, for classes arising from the summand
$H^*(BSO(2n+1);\Z /2)\subset H^*(BO(2n);\Z /2)$, there can be no other relations. In
a subsequent work {\cite{KZ}}, we will discuss relations among other classes, and in particular establish a complete set of relations when $n=1$.   In many cases, $H^*(BK;R)$ is a polynomial algebra, and if not, it contains a polynomial algebra generated by a family of
characteristic classes (Theorem \ref{cohoclassic}).  So we will use following conventions for the ease of notation.
 If $I=(i_1,i_2, \cdots i_n)$, and $a_1, \cdots a_n$'s are some cohomology classes indexed by integers, then
$a^I$ will denote the monomial $a_1^{i_1}\cdots a_n^{i_n}$.  In the case of the Stiefel-Whitney classes in the cohomology of
$BSO(n)$ or $BSU(n)$, we simply skip the index $i_1$.  Now we can state the following:

\begin{thm}[(Theorem \ref{univ})] Let  \label{Intro-univ}
$\nu _{I}=\overline{\nu}_{Bj^*(w^I)}$ where $j:O(2n)\rightarrow
SO(2n+1)$ will be defined in Section \ref{transfersplitsection}.
Then the only relations among these classes are the ones generated by
$$\nu _{I}^2=\nu _{2I}.$$
Thus the classes $\nu _{i_2,\cdots ,i_{m+1}}$ with at least one $i_k$ odd are algebraically independent.
\end{thm}

Our method can also be applied to the cohomology with integer coefficient.  That is, if $p^I$ denotes the monomial in
Pontryagin classes, then
\begin{thm}[(Theorem \ref{wahlc})]\label{Intro-wahlc}
 The classes {$\zeta _I=\overline{\nu}_{p^I}$} are not divisible in $H^*(\Omega ^{\infty}_0MTO(2m);\Z)$.
\end{thm}
The case $m=1$, combined with the homological stability theorem of \cite{Wa} is Theorem A of \cite{ER}.

%
{We conclude the paper with}
a `non-theorem'; which tells that computations such as Galatius' and Randal-Williams' were somehow exceptional cases and for an infinite family of Madsen-Tillmann spectra, such a description in terms of short exact sequences is not available. We have the following.
\begin{prop}[(Proposition  \ref{nonexact})]\label{Intro-nonexact}
In many cases (a precise hypothesis is given in Proposition  \ref{nonexact}), the sequence of Hopf algebras
$$ H_*(\Omega^\infty_0MT{\mathbf K}(m+1);\Z/p)\lra H_*(Q_0B{\mathbf K}(m+1)_+;\Z/p)\lra H_*(\Omega^\infty_0 MT{\mathbf K}(m);\Z/p)$$
induced by the cofibration for Madsen-Tillmann spectra (Lemma \ref{cofibreoftransfer}) is not short exact.
\end{prop}

However, in  our subsequent work \cite{KZ}, we will exhibit summands of $MTO(n)$'s for which such exact sequences exist.
{ We simply mention
 that in the case of $MTO(2)$, we will have}
\begin{thm}[(\cite{KZ})]\label{MTO(2)}
Let $D(n)$ be the cofibre of $Sp^{2^{n-1}}S^0\to Sp^{2^n}S^0$ induced by {the $2$-fold diagonal} $X\to X^{\times 2}$ where $Sp^{2^n}S^0$ is the
 $2^n$-th symmetric power of $S^0$. Then, completed at $p=2$, we have
$$MTO(2)\simeq BSO(3)_+\vee \Sigma^{-2}D(2).$$
\end{thm}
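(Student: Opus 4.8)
The plan is to combine the splitting $MTO(2)\simeq_{(2)} BSO(3)_+\vee C$ — where $C$ denotes the complementary summand — coming from Theorem \ref{thsplitmainallp}(i), with an explicit identification of $C$ with a shift of the Randal-Williams cofibre $D(2)$. First I would recall from \cite[Theorem A, Theorem B]{Ra} and \cite{KZ} that, at the prime $2$, the relevant cofibre sequence of Madsen-Tillmann spectra $MTO(2)\stackrel{\omega}{\to}BO(2)_+\to MTO(1)$, together with the known identification $MTO(1)\simeq \Sigma^\infty \mathbb{RP}^\infty_{-1}$ and $MTO(0)\simeq S^0$, pins down the stable homotopy type of $MTO(2)$ up to an extension problem; the role of \cite{KZ} is precisely to resolve that extension in terms of the symmetric-power filtration of $S^0$, i.e.\ in terms of the spectra $Sp^{2^n}S^0$ and the diagonal-induced maps between them. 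So the first step is: invoke the splitting theorem to peel off $BSO(3)_+$, and invoke \cite{KZ} to describe what remains.

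Second, I would analyse the complementary summand $C$. The point is that $BSO(3)$ accounts for exactly the ``diagonal'' part of $H^*(BO(2);\Z/2)$ under the transfer-idempotent $Bj^*$, and what is orthogonal to it should be a single two-stage piece of the symmetric-power filtration. Concretely, using that $Bj:O(2)\to SO(3)$ is (a model for) the double cover and that $H^*(BSO(3);\Z/2)=\Z/2[w_2,w_3]$ sits inside $H^*(BO(2);\Z/2)=\Z/2[w_1,w_2]$ as the subalgebra $\Z/2[w_1^2+\dots,\, w_1w_2+\dots]$ — more precisely the image of the transfer idempotent — one sees that the cofibre of the split inclusion $\Sigma^\infty_+ BSO(3)\to MTO(2)$ has mod $2$ cohomology a free module over the Steenrod algebra on classes matching those of $\Sigma^{-2}D(2)$, where $D(2)=\mathrm{cofib}(Sp^2S^0\to Sp^4S^0)$. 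I would compute $H^*(D(2);\Z/2)$ directly from the Nakaoka description of $H^*(Sp^{2^k}S^0;\Z/2)$ (it is the span of admissible monomials $Sq^I$ of ``length'' between $2^{k-1}$ and $2^k$, in the appropriate grading), so that $H^*(\Sigma^{-2}D(2);\Z/2)$ is the free $\mathcal{A}$-module–type object with the right bottom class in degree matching $w_1^2\in H^2(BO(2))$; matching this with the cokernel of $\omega^*$ restricted to the $BSO(3)$-complement identifies $C$ and $\Sigma^{-2}D(2)$ on cohomology.

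Third, to upgrade the cohomology isomorphism to a homotopy equivalence of spectra (at $p=2$), I would produce an actual map. The natural candidate comes from the composite $MTO(2)\stackrel{\omega}{\to}BO(2)_+\to BO(2)/BO(1)=\Sigma^\infty\mathbb{RP}^\infty/\mathbb{RP}^0$-type quotient, combined with the fact — again from \cite{KZ} and ultimately from Kuhn–Priddy/Whitehead-conjecture technology — that the symmetric-power filtration of $S^0$ receives canonical maps from Thom spectra of this kind. Having such a map $MTO(2)\to \Sigma^{-2}D(2)$ which restricts to the cohomology iso on the complement of $\Sigma^\infty_+BSO(3)$, together with the already-established splitting, I would conclude by a Whitehead-type argument: a map between $2$-complete spectra of finite type inducing an isomorphism on mod $2$ cohomology is an equivalence.

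The main obstacle I anticipate is the third step — constructing the map $MTO(2)\to\Sigma^{-2}D(2)$ realising the cohomology identification, or equivalently resolving the extension problem in the cofibre sequence $BSO(3)_+\to MTO(2)\to C$ to show $C\simeq\Sigma^{-2}D(2)$ rather than merely $H^*(C)\cong H^*(\Sigma^{-2}D(2))$ as $\mathcal{A}$-modules. This is exactly where the input of the sequel \cite{KZ} is essential: the identification of summands of $MTO(n)$ with pieces of the symmetric-power filtration of $S^0$, via the Steinberg idempotent and the Whitehead conjecture, is what converts the cohomological bookkeeping here into a genuine splitting of spectra. Accordingly, I would present Theorem \ref{MTO(2)} as a corollary of Theorem \ref{thsplitmainallp}(i) of this paper and the structural results of \cite{KZ}, with the bulk of the work being the cohomology calculation that matches $C$ with $\Sigma^{-2}D(2)$.
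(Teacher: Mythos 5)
The paper itself gives no proof of Theorem~\ref{MTO(2)}: it is announced in the introduction as combining Theorem~\ref{thsplitmainallp}(i) with results to appear in the sequel~\cite{KZ} (Steinberg idempotents and the Whitehead conjecture), and no argument for it appears anywhere in the body of the present paper. Your proposal's structure---peel off $BSO(3)_+$ via the transfer splitting of Theorem~\ref{thsplitmainallp}(i), and defer the identification of the complementary summand with $\Sigma^{-2}D(2)$ to the Steinberg-idempotent/symmetric-power-filtration technology of~\cite{KZ}---therefore mirrors the paper's intent exactly; there is nothing in this paper against which one could check the cohomological bookkeeping in your second and third steps.

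Two inaccuracies in your intermediate heuristics are worth flagging, though neither is load-bearing since you explicitly delegate the hard step to~\cite{KZ}. First, the map $j\colon O(2)\to SO(3)$, $X\mapsto(\det X)(X\oplus 1)$, is a closed subgroup inclusion with $SO(3)/j(O(2))\cong\R P^2$; the splitting comes from $\chi(\R P^2)=1$, not from $j$ being ``a model for the double cover.'' Second, $H^*(D(2);\Z/2)$ is not a free module over the Steenrod algebra: $D(2)$ is (up to a double suspension) the Steinberg summand of a Thom spectrum over $B(\Z/2)^2$ in the Mitchell--Priddy/Kuhn picture, whose cohomology carries a rich and nontrivial $\mathcal{A}$-action. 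The matching of the complementary summand with $\Sigma^{-2}D(2)$ would need to be phrased as an isomorphism of $\mathcal{A}$-modules in that explicit description, rather than via freeness, and then promoted to a homotopy equivalence exactly as you anticipate---which is precisely the content~\cite{KZ} is meant to supply.
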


\begin{description}
\item[Contribution of this paper ]From technical point of view, our main observation from which most of splitting
 results {that we formulated towards the end of this project} will follow and is Theorem \ref{twistedtransfer}; parts of this theorem in special cases
 might be well known, but we don't know any reference{ in their full generality}. In particular, our observations on multiplicative properties of these transfer
 maps recorded as Lemma \ref{multiplicative1} and Theorem \ref{twistedtransfer}(i) (see also Theorem \ref{twistedtranscomp}), as
 well as a push forward formula recorded as Theorem \ref{pushforward}, we believe to be new. Our splitting results are first to appear in
 the literature. Perhaps, from conceptual point of view, one of our important results is the reduction of studying $MTO(n)$ to the
 $2$-primary case, as recorded in Theorem \ref{thsplitmainallprestated}(ii)(see also Lemma \ref{MTO(n)-p-odd}). Similarly, our discussions on
 characteristic classes, their independence, and their integral versions  {generalise previously} known results.
 \item[Organisation of the paper ] We
{start}  by a brief recollection on \opt{sc}{localisation, splitting of spectra, and triviality of the rational stable homotopy. We {then} fix
 some notations on} Thom spectra. Recalling Becker and Gottlieb's variant of Hopf's vector field theorem, we use
 Madsen-Tillmann-Weiss map to prove some splitting results. We then introduce twisted transfer maps, and prove some of their
 multiplicative properties. {Next, we} follow by some factorisation results for twisted transfer maps, as well as
{identification of the} cofibres of some transfer maps using a result of Morisugi. We then use these factorisations to prove more
 splitting results. In the rest of the paper, we study the effect of our results on universally defined characteristic classes.
\item[Notations and conventions ]{
Besides the points made \hyperlink{conv1}{above}, we use the following
conventions.  
We denote by }$X_+$ the space $X$ with the disjoint basepoint added.
\opt{unreduced}{
Thus the notation $H^*(X)$ can mean the (unreduced) cohomology of $X$ as a space, or cohomology of $X$ as a spectrum, which is
the reduced cohomology of  $X$ as a space.  We mainly use it for the latter, thus we use the notation
$H^*(X_+)$ to denote the unreduced cohomology of $X$.  However, in some cases, when there is little risk of confusion,
we write $H^*(X)$ instead.
The only difference is in $H^0$ which is irrelevant to our results in most cases. We use the notation $\widetilde{H}^*(X)$ for the reduced cohomology only when there is
an important risk of confusion.}
 \textcolor{black}{We use the bold letter $\mathbf{K}$  to denote a generic family of Lie groups, that can be specialised to
$\mathbf{K}(n)$.
For instance for $\mathbf{K}=O$.  we have $\mathbf{K}(n)=O(n)$. On the other hand, the normal letters $K$, $G$ etc. will denote
a particular Lie group.} For a (virtual) vector bundle $\alpha\to B$ over some $CW$-complex $B$, we write $B^\alpha$ for the Thom (spectrum) space of $\alpha$. For a space $B$, we use $\R^k$ and $B\times\R^k$ \textcolor{black}{interchangeably} for the $k$-dimensional trivial vector bundle \textcolor{black}{over $B$} which will be clear from the context; the notation $\R^k$ also denotes the $k$-dimensional Euclidean space as usual. The notation $\simeq$ denotes (local) homotopy equivalence of spectra. By abuse of notation, $\cong$ is used to denote homeomorphism of spaces or isomorphism of algebraic objects which will be clear from the context. We shall write $\Z/p$ for the cyclic group of order $p$, and $\Z_{(p)}$ for $p$-localisation of the ring of integers. $p$ will always denote a (positive) prime integer, and all spaces and spectra are localised at $p$ unless stated otherwise.

\end{description}
\opt{sc}{
\section{\textcolor{black}{Recollections on localisation and splitting of spectra}}
\textcolor{black}{This section is intended to explain the framework in which localisation and splitting interact. As an illustration, we record some well known facts about rational stable homotopy and rational homology of infinite loop spaces. This explains our reason to look at localisation of spectra at a given prime $p$.}
\subsection{Localisation of spectra and spaces}
\label{sec:local}
Most of the time we work at one prime at a time.  Thus we can replace safely a spectrum $E$ with its $p$-localisation $E _{(p)}$, that is, a spectrum such that there is a natural map $l_E:E\rightarrow E _{(p)}$ such that $H_*(l_E;k)$ is an isomorphism for any $p$-local coefficients $k$, and $\pi_*( E _{(p)})$ as well as $H_*(E _{(p)};\Z )$ are $p$-local \cite[Proposition 2.4 and Theorem 3.1]{Bspec}.  We have, notably,
\begin{lmm}{\cite[Proposition 2.4]{Bspec}} \label{Lem:locspec}
 $\pi _*( E _{(p)})\cong \pi _*( E )\otimes \Z _{(p)}$.
\end{lmm}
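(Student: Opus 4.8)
The plan is to exhibit an explicit model for $E_{(p)}$ as a mapping telescope obtained by smashing $E$ with a $p$-localised sphere, and then to read off its homotopy groups directly from the telescope.

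First I would fix positive integers $m_1,m_2,\ldots$, each prime to $p$, so that the partial products $N_k=m_1\cdots m_k$ are cofinal in the directed set of all positive integers prime to $p$ (for instance, let the $m_k$ run through the primes $\ne p$, with repetitions). For any abelian group $A$ one then has $\mathrm{colim}\bigl(A\xrightarrow{\ \cdot m_1\ }A\xrightarrow{\ \cdot m_2\ }A\to\cdots\bigr)\cong A\otimes\Z_{(p)}$, since $-\otimes\Z_{(p)}$ is exact and commutes with filtered colimits; in particular $\mathrm{colim}(\Z\xrightarrow{m_1}\Z\to\cdots)\cong\Z_{(p)}$, so the telescope $S\Z_{(p)}:=\mathrm{hocolim}(S^0\xrightarrow{m_1}S^0\xrightarrow{m_2}\cdots)$ is the Moore spectrum of $\Z_{(p)}$, i.e.\ the $p$-local sphere. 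For a spectrum $E$ I set
$$\widetilde E\ :=\ E\wedge S\Z_{(p)}\ \simeq\ \mathrm{hocolim}\bigl(E\xrightarrow{\ m_1\ }E\xrightarrow{\ m_2\ }E\to\cdots\bigr),$$
using that smashing with the fixed spectrum $E$ commutes with homotopy colimits.

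Next I would check that $\widetilde E$, together with the canonical map $l\colon E\to\widetilde E$ onto the first term of the telescope, enjoys the properties recalled above as characterising $E_{(p)}$. Since homotopy groups commute with sequential homotopy colimits, $\pi_*(\widetilde E)=\mathrm{colim}(\pi_*E\xrightarrow{m_1}\pi_*E\to\cdots)=\pi_*(E)\otimes\Z_{(p)}$, which is a $\Z_{(p)}$-module, hence $p$-local; the same computation with integral homology (which likewise commutes with such colimits) gives $H_*(\widetilde E;\Z)=H_*(E;\Z)\otimes\Z_{(p)}$, again $p$-local. Finally, for $p$-local coefficients $k$ each transition map $E\xrightarrow{m_i}E$ induces multiplication by the unit $m_i$ on the $k$-module $H_*(E;k)$, hence an isomorphism; passing to the colimit shows $H_*(l;k)\colon H_*(E;k)\to H_*(\widetilde E;k)$ is an isomorphism. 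By the (essential) uniqueness of the $p$-localisation contained in the cited existence statement, $\widetilde E$ is equivalent, canonically under $E$, to $E_{(p)}$, whence $\pi_*(E_{(p)})\cong\pi_*(\widetilde E)=\pi_*(E)\otimes\Z_{(p)}$.

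The only point that genuinely requires care — and the only place where the argument leans on the cited results rather than on elementary telescope manipulations — is the identification of the concrete model $\widetilde E$ with the spectrum $E_{(p)}$ singled out by those characterising properties; this is exactly the uniqueness of $p$-localisation of spectra, which I take as given. Everything else is formal: interchanging $\pi_*$ and $H_*$ with sequential homotopy colimits, and interchanging the homotopy colimit with the fixed smash factor $E$. One could try instead to argue directly with the cofibre $C$ of the structural map $E\to E_{(p)}$, but deducing $\widetilde{C}\simeq C\wedge S\Z_{(p)}\simeq *$ from the homological hypotheses requires more control over $C$ than the characterisation gives for a general (non-connective) $E$, so I would \emph{not} take that route; the telescope construction is the clean one.
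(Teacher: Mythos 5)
The paper does not prove this lemma at all; it simply cites \cite[Proposition 2.4]{Bspec}. Your mapping-telescope construction of $E_{(p)}$ as $E\wedge S\Z_{(p)}$, and the resulting identification of $\pi_*(E_{(p)})$ with the colimit $\pi_*(E)\otimes\Z_{(p)}$, is precisely the standard argument underlying Bousfield's cited proposition, so your proof is correct and takes essentially the same route as the source the paper invokes.
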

One can also define localisation with respect to any multiplicatively closed set with similar property. { Notably, one can talk about a localisation ``away from $p$'', or even localisation  ``at $0$'',  in other words rationalisation $E_{\Q}$ with similar properties. H}owever, as is well-known, and as we shall review later, the rational stable homotopy theory is rather trivial, so we are very little concerned with rationalisation.  Thus we {\it don't} adopt a wide-spread convention according to which $0$ is considered as a prime number so that ``localisation at a prime $p$'' includes the rationalisation.

The localisation exists as well in the homotopy category of spaces. A space $X$ is called nilpotent if $\pi _n(X)$ has a finite filtration
such that $\pi _1(X)$ acts trivially on each successive filtration quotient.  Then we have
\begin{lmm}{\cite[Proposition 3.1]{BK}} \label{Prop:loc}
If $X$ is nilpotent, then its localisation $X_{(p)}$ satisfies
$H_*(X_{(p)};\Z )\cong H_*(X;\Z )\otimes \Z _{(p)}\mbox{, }\pi _*(X_{(p)})\cong \pi _*(X)\otimes \Z _{(p)}$.
\end{lmm}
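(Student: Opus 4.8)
The plan is to construct $X_{(p)}$ and verify both isomorphisms simultaneously, by induction up a principal Postnikov tower, with Eilenberg--MacLane spaces as the base case. So first I would settle the case $X=K(A,q)$ with $A$ abelian and $q\geqslant1$: set $K(A,q)_{(p)}:=K(A\otimes\Z_{(p)},q)$, with localisation map induced by $A\to A\otimes\Z_{(p)}$. The homotopy statement is then tautological, while $\widetilde H_*(K(A\otimes\Z_{(p)},q);\Z)\cong\widetilde H_*(K(A,q);\Z)\otimes\Z_{(p)}$ follows from the classical computation of $H_*(K(A,q);\Z)$ (bar construction and K\"unneth), every step of which commutes with the exact functor $-\otimes\Z_{(p)}$. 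In particular $\widetilde H_*(K(A,q);\Z)$ is already a $\Z_{(p)}$-module whenever $A$ is, so for such spaces the coefficient map $H_*(-;\Z)\to H_*(-;\Z_{(p)})$ is an isomorphism --- a point I reuse below.

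Next, because $X$ is nilpotent it admits a principal Postnikov tower: $X\simeq\lim_n X_n$ with $X_0\simeq *$ and each $X_n\to X_{n-1}$ obtained as the pullback of the path--loop fibration $PK(A_n,q_n+1)\to K(A_n,q_n+1)$ along a $k$-invariant $\kappa_n\colon X_{n-1}\to K(A_n,q_n+1)$, so $X_n\to X_{n-1}$ has fibre $K(A_n,q_n)$ with $A_n$ abelian. Existence of this \emph{untwisted} refinement is exactly where nilpotence is used; for a general $\pi_1$-action the $k$-invariants live in twisted cohomology and the argument below breaks.

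I would then localise the tower inductively. Given $(X_{n-1})_{(p)}$ and a map $X_{n-1}\to(X_{n-1})_{(p)}$ with the asserted properties, the class $\kappa_n\in H^{q_n+1}(X_{n-1};A_n)$ determines, after tensoring coefficients with $\Z_{(p)}$, a class in $H^{q_n+1}(X_{n-1};A_n\otimes\Z_{(p)})\cong H^{q_n+1}((X_{n-1})_{(p)};A_n\otimes\Z_{(p)})$, the isomorphism coming from the homology statement for $X_{n-1}$ already in hand together with universal coefficients; this is a map $(\kappa_n)_{(p)}\colon(X_{n-1})_{(p)}\to K(A_n\otimes\Z_{(p)},q_n+1)$, and I define $(X_n)_{(p)}$ as its homotopy fibre --- a principal fibration over $(X_{n-1})_{(p)}$ with fibre $K(A_n\otimes\Z_{(p)},q_n)$, receiving a compatible map from $X_n\to X_{n-1}$.

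To finish stage $n$: the long exact homotopy sequences of the two fibrations, exactness of $-\otimes\Z_{(p)}$, and the five lemma give $\pi_*((X_n)_{(p)})\cong\pi_*(X_n)\otimes\Z_{(p)}$; and the Zeeman comparison theorem applied to the induced map of Serre spectral sequences --- an $E^2$-isomorphism with $\Z_{(p)}$-coefficients by the inductive hypothesis on the base and the Eilenberg--MacLane case on the fibre --- shows $X_n\to(X_n)_{(p)}$ is a $\Z_{(p)}$-homology isomorphism, whence $H_*((X_n)_{(p)};\Z)\cong H_*((X_n)_{(p)};\Z_{(p)})\cong H_*(X_n;\Z_{(p)})\cong H_*(X_n;\Z)\otimes\Z_{(p)}$, using that $(X_n)_{(p)}$, being built from Eilenberg--MacLane spaces of $\Z_{(p)}$-modules, has $\Z_{(p)}$-local integral homology. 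Finally I set $X_{(p)}:=\lim_n(X_n)_{(p)}$; since in each degree both towers are eventually constant (the fibre of $X_n\to X$ becomes arbitrarily highly connected), the homology and homotopy of the limits are the limits of the homology and homotopy, which gives the statement for $X$ (and, by uniqueness of localisation, identifies this $X_{(p)}$ with the one produced by Bousfield--Kan). The only genuine obstacle is the input, not the bookkeeping: one needs the refined principal Postnikov tower for nilpotent spaces, and one needs the Eilenberg--MacLane homology computation in enough detail to see its compatibility with $-\otimes\Z_{(p)}$ (in particular that $\widetilde H_*(K(A,q);\Z)$ is $\Z_{(p)}$-local when $A$ is). Everything else is an exactness-and-five-lemma induction followed by a passage to the limit.
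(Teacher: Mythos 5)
Your proof is correct. A couple of points worth flagging before comparing with the cited source. The construction of a lift $X_n\to(X_n)_{(p)}$ from the nullity of the composite $X_n\to(X_{n-1})_{(p)}\to K(A_n\otimes\Z_{(p)},q_n+1)$ requires choosing the nullhomotopy so that the induced map on fibres $K(A_n,q_n)\to K(A_n\otimes\Z_{(p)},q_n)$ is the coefficient localisation; this is standard but not automatic (lifts form a torsor over $[X_n,K(A_n\otimes\Z_{(p)},q_n)]$), and you should say a word about it. Also, the spectral sequence comparison needs the local coefficient system on the base to be trivial; that holds here because the stages are principal, which is exactly where nilpotence enters, and you do note this. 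Finally, the fact that $\widetilde H_*(K(A,q);\Z)$ is a $\Z_{(p)}$-module when $A$ is can be reduced (by passing to filtered colimits and then to $\Z_{(p)}$ and $\Z/p^k$ summands) rather than relegated to ``every step of the bar construction commutes with $-\otimes\Z_{(p)}$,'' which is a little hand-wavy, but the conclusion is right.

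As for the comparison: the paper does not give its own proof; it simply cites Bousfield--Kan. Your argument is a genuinely different route from theirs. You build $X_{(p)}$ inductively up a refined principal Postnikov tower, localising one Eilenberg--MacLane stage at a time and propagating the two isomorphisms by the five lemma and a Serre spectral sequence comparison. Bousfield--Kan instead construct localisation globally via the $R$-completion: the cosimplicial resolution attached to the triple $R\otimes(-)$ on simplicial sets, whose total space (the $R$-completion) they then prove, for $X$ nilpotent, agrees with $R$-localisation and has the stated effect on homology and homotopy. Your ``Sullivan-style'' construction is more elementary and makes the role of nilpotence (existence of the untwisted principal refinement) completely transparent, and it is self-contained given standard facts about Eilenberg--MacLane spaces; the Bousfield--Kan approach is heavier machinery but is functorial from the start, works without choosing a Postnikov tower, and extends smoothly to $R$-completion for general $X$ (where it need not be a localisation). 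For the statement as used in the paper, either proof is adequate.
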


\textcolor{black}{Note that for a space $X$, $\pi _1(\Omega X)$ acts trivially on $\pi _n(\Omega X)$, so any loop space is nilpotent. In particular, an infinite loop space is nilpotent.  Thus by using Lemmas \ref{Lem:locspec} and \ref{Prop:loc} we see that the functor $\Omega ^{\infty}$ commutes with the localisation.  Thus for all purpose of this paper, we can replace safely a spectrum $E$ with its $p$-localisation $E _{(p)}$, which we will do throughout the rest of the paper.}}

\subsection{Splitting of spectra}
For a spectrum $E$, we say $E$ splits if there is a homotopy equivalence $E_1\vee E_2\to E$. In practice, often we start with known spectra $E$, $E_1$ and a map among them going one way or the other, and ask whether if there is such a spectrum $E_2$.  We say that $E_1$ splits off $E$ when this is the case. We note that the existence of such a spectrum $E_2$ is equivalent to the existence of a section.  That is, we have
\begin{lmm}\label{lm:split}
 \begin{enumerate}
 \item Let $i:E_1\rightarrow E $ be a map of spectra.  Then $E_1$ splits off $E$ if and only if there is a map of spectra $r:E\rightarrow E_1$ such that $r\circ i$ is a self homotopy equivalence.
\item Let $r:E\rightarrow E_1$  be a map of spectra.  Then $E_1$ splits off $E$ if and only if there is a map of spectra $i:E_1\rightarrow E $ such that $r\circ i$ is a self homotopy equivalence.
\end{enumerate}
\end{lmm}

\begin{proof}
It suffices to take $E_2$ to be the cofibre of $i$.
\end{proof}
We note that in the category of spectra, the wedge sum is a product, that is, for any spectra $X$, we have
a natural isomorphism
$$[X,E_1\vee E_2]\cong [X,E_1]\times [X,E_2].$$  As the infinite loop space functor $\Omega ^{\infty}$ is
the right adjoint of the infinite suspension $\Sigma ^{\infty}$, $\Omega ^{\infty}$ commutes with the product,
thus we see that
\begin{lmm}\label{splittingloopspaeofwedge}
If the spectrum $E$ splits as $E_1\vee E_2\to E$ then so does the infinite loop space
$\Omega ^{\infty}E$ and we have $\Omega ^{\infty}E\cong \Omega ^{\infty}E_1 \times \Omega ^{\infty}E_2.$
\end{lmm}

\subsection{The rational stable homotopy theory}
As we wrote earlier, the rational stable homotopy theory is well-known to be rather trivial.  More precisely
the homotopy group functor $\pi _*$, or the homology group functor $H_*(-;\Q )$ provides
an equivalence between the category of $\Q$-local spectra and
that of graded vector spaces over $\Q$. One way to see this is that any spectrum is
a module over the sphere spectrum $S^0$, so after rationalisation it becomes a module over $S^0_{\Q}$.  However,
the latter is just $H{\Q}$.  Thus a rationalised spectrum is a module over {$H{\Q}$}.  However, for any (ungraded)
{field} $R$, the category of spectra over $HR$ is equivalent to that of graded modules over $R$.

Therefore, splitting $\Q$-local spectra is equivalent to splitting its homology as a graded vector space. {But we have,} for any spectrum $X$,
$$H_*(\Omega ^{\infty }_0(X);\Q)\cong H_*(\Omega ^{\infty }\overline{X};\Q)\cong H_*(\Omega ^{\infty }(\overline{X}_{\Q});\Q),$$
where $\overline{X}$ denotes the $0$-connective cover of $X$, that is a spectrum $\overline{X}$ characterized by the property
that there exists a map $q:\overline{X}\rightarrow X$ such that
$\pi _i(\overline{X})\stackrel{\pi _i(q)}{\cong}(X)$ for $i>0$ and $\pi _i(\overline{X})\cong 0$ for $i\leq 0$. {Furthermore} $\overline{X}_{\Q}$ splits as a wedge of the Eilenberg-Maclane spectra $\Sigma ^nH\Q$'s, {so} it suffices to determine
the homology of $K(\Q ,n)$'s with $n>0$ to determine $H_*(\Omega ^{\infty }_0(X);\Q)$ as a functor of $H_*(\overline{X};\Q)\cong H_{*>0}(X;\Q)$.  $H^*(K(\Q ,n))$ is known to be the free commutative (in the graded sense) algebra generated by $\Q$ concentrated in degree $n$.  Thus we see
\begin{prop}\label{hinfrational}
Let $X$ be any spectrum.  Then $H_*(\Omega ^{\infty }_0(X);\Q)$ is naturally isomorphic to the free commutative (in the graded sense) algebra generated by $H_{*>0}(X;\Q)$.
\end{prop}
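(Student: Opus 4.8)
The plan is to reduce, in three steps, the computation of $H_*(\Omega^\infty_0 X;\Q)$ for an arbitrary spectrum $X$ to the known homology of rational Eilenberg--MacLane spaces. First I would replace $X$ by its $0$-connective cover $\overline X$; as noted in the preceding subsection, the map $q:\overline X\to X$ induces an isomorphism on $\pi_i$ for $i>0$, hence an equivalence $\Omega^\infty_0\overline X\simeq\Omega^\infty_0 X$ (both infinite loop spaces only see nonnegative homotopy, and in degree $0$ we have taken the base-point component), so $H_*(\Omega^\infty_0 X;\Q)\cong H_*(\Omega^\infty_0\overline X;\Q)\cong H_*(\Omega^\infty\overline X;\Q)$ because $\overline X$ is connective and its infinite loop space is already connected. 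Moreover $H_{*>0}(X;\Q)\cong H_*(\overline X;\Q)$, so it suffices to prove the statement with $X$ replaced by a connective spectrum.

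Second, I would rationalise: since $\Omega^\infty$ commutes with $p$-localisation and with rationalisation (infinite loop spaces are nilpotent, by the last remark of the infinite loop spaces subsection, so Lemmas \ref{Lem:locspec} and \ref{Prop:loc} apply), we get $H_*(\Omega^\infty\overline X;\Q)\cong H_*(\Omega^\infty(\overline X_\Q);\Q)$. By the triviality of rational stable homotopy theory reviewed above, $\overline X_\Q$ is a module over $H\Q$ and hence splits as a wedge $\bigvee_n \Sigma^n H\Q$ of suspended rational Eilenberg--MacLane spectra, one copy of $\Sigma^n H\Q$ for each basis element of $H_n(\overline X;\Q)$ (with $n>0$, since $\overline X$ is $0$-connective). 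Applying $\Omega^\infty$, which sends wedges of spectra to products of spaces, yields $\Omega^\infty(\overline X_\Q)\simeq\prod_{n>0} K(\Q,n)^{\times \dim H_n}$, and by the Künneth theorem over $\Q$ the homology of this product is the graded tensor product of the homologies of the factors.

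Third, I would invoke the classical computation that $H^*(K(\Q,n);\Q)$ is the free graded-commutative algebra on one generator in degree $n$ — equivalently $H_*(K(\Q,n);\Q)$ is the free graded-commutative coalgebra, but since we want the statement phrased as an algebra it is cleanest to dualise degreewise (each homology group is finite-dimensional), so that $H_*(K(\Q,n);\Q)$ is likewise free graded-commutative on one generator in degree $n$. Taking the tensor product over all the factors, and matching one polynomial/exterior generator to each basis element of $H_{*>0}(X;\Q)$, identifies $H_*(\Omega^\infty_0 X;\Q)$ with the free graded-commutative algebra on $H_{*>0}(X;\Q)$; naturality in $X$ follows because every step — connective cover, rationalisation, the $H\Q$-module splitting, and $\Omega^\infty$ — is functorial, and the Künneth isomorphism is natural.

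The only genuine subtlety, and thus the step I would be most careful about, is the naturality of the splitting $\overline X_\Q\simeq\bigvee_n\Sigma^n H\Q$: the wedge decomposition of an $H\Q$-module spectrum is not canonical, so to get a \emph{natural} isomorphism of algebras one should argue at the level of the homology functor rather than fixing a splitting — that is, observe that $H_*(-;\Q)$ is an equivalence from $\Q$-local spectra to graded $\Q$-vector spaces and that $\Omega^\infty_0$ corresponds under this equivalence to the free-graded-commutative-algebra functor, with all identifications natural. Everything else (the connective-cover reduction, commuting $\Omega^\infty$ past localisation, Künneth, and the known cohomology of $K(\Q,n)$) is standard and can be quoted.
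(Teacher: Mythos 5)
Your proposal follows essentially the same route as the paper: pass to the $0$-connective cover, rationalise, use that $\overline X_\Q$ is an $H\Q$-module hence a wedge of $\Sigma^nH\Q$'s, apply $\Omega^\infty$, and quote the free graded-commutative structure of $H_*(K(\Q,n);\Q)$. Your added caveat about deriving naturality from the equivalence $H_*(-;\Q)\colon (\Q\text{-local spectra})\to(\text{graded }\Q\text{-vector spaces})$ rather than from a chosen wedge splitting is a sound refinement of a point the paper also glosses over.
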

We also note that this follows from \cite[Theorem 4.10]{Ku}.

\section{Thom spectra and the transfers}
\subsection{\textcolor{black}{Recollections on Thom spaces and Thom spectra}}\label{rec:Thom}
\textcolor{black}{Thom spaces and Thom spectra, are the main objects that we study in this paper. The aim of section, is to briefly recall some facts about these spaces/spectra. The material here are standard and we refer the reader to \cite{Rudyak} for further references.}


\begin{description}
 \item[Definition ]
Let $X$ be a space, $\zeta $ a $n$-dimensional (real) vector bundle over \textcolor{black}{$X$ equipped with a Riemannian metric}.  In our applications, $X$ will be a classifying space of some Lie group,
$\zeta $ will be a bundle obtained by its representation, but what follows here will be valid for any vector bundle
over any (good) space. \textcolor{black}{We define the Thom space of $\zeta$ by $X^{\zeta}=D(\zeta)/S(\zeta)$ where $D(\zeta)$ and $S(\zeta)$ are the total spaces of disc and sphere bundles associated to $\zeta$, respectively}.
\item[Thom isomorphism ]\hypertarget{DefTiso}{We say that $\zeta$ is orientable for $H^*(-;k)$ if the Thom isomorphism $H^*(X
\opt{unreduced}{_+})\cong \opt{original}{\opt{original}{\tilde}}{H}^{*+n}(X^{\zeta};k)$ holds.
This is the case  for an orientable (in the usual sense)  for any $k$, and for any vector bundle if $k=\Z /2$.
Similar notion of orientability exists for generalized multiplicative cohomology theory.}
\item[Stability of Thom isomorphism ]The Thom isomorphism is stable in the sense that, if $\zeta '=\zeta \oplus \R ^m$, then the Thom isomorphism for $X^{\zeta '}\cong \Sigma ^mX^{\zeta }$ is given by the composition of the Thom isomorphism for $X^{\zeta }$ and the cohomology suspension isomorphism.
Thus it makes sense to talk about the Thom isomorphism for $X^{\zeta }$ when $\zeta$ is a virtual bundle of the form $\zeta '-\R ^m$ with $\zeta '$ a genuine vector bundle over $X$, and in this case $X^{\zeta }$ is defined to be $\Sigma ^{-m}X^{\zeta '}$.
On the other hand, if $X$ is a finite complex, then $KO^0(X)$ is finite, which implies that any virtual bundle $\zeta $ over $X$ can be written as $\zeta '-\R ^m$ with $\zeta'$ a genuine vector bundle.  Thus
we can define the Thom spectra for any virtual bundle over a finite complex in such a way as Thom isomorphism (if oriented) holds.  For general $X$, by filtering it by its finite subcomplexes, and using the naturality arguments, we see that we can define the Thom spectra so that the Thom isomorphism holds when it should.
\item[Thom diagonal ] \hypertarget{Tddef}{}The Thom isomorphism, when it holds, allows us to consider $\opt{original}{\tilde}{H}^*(X^{\zeta };k)$ as a module over $H^*(X \opt{unreduced}{_+};k)$, free of rank $1$. However, we would like to {do so}
without the orientability hypothesis.  For this purpose, we have the ``generalized cup product'' (\cite[IV.5.36]{Rudyak},\cite[2.0.1]{Dobson}) at hand.  That is, if $\zeta $ is a (genuine) vector bundle
over $X$, then the diagonal $X\rightarrow X\times X$ pulls $\zeta \times {0}$ back to $\zeta$, thus induces a map of Thom spaces
$X^{\zeta}\to (X\times X)^{\zeta \times {0}}\cong X^{\zeta}\wedge X_{+}${,}
{called} {\it Thom diagonal}. 
{The induced map in the cohomology}
$H^*(X \opt{unreduced}{_+};k)\otimes \opt{original}{\tilde}{H}^*(X^{\zeta};k)\to  \opt{original}{\tilde}{H}^*(X^{\zeta};k)$
 is
{}called \textit{generalized cup
product} \textcolor{black}{and turns $H^*(X^\zeta;k)$ into a $H^*(X\opt{unreduced}{_+};k)$-module.}  This construction is ``stable'' in the above sense,  thus can be generalized to virtual bundles.
\item[Functoriality ] 
Suppose $\xi\to E$ and $\zeta\to B$ are (virtual) vector bundles, and there is a map of bundles $\xi\to\zeta$ covering a map $f:E\to B$
which is injective on the fiber.  Thus one can use the metric of $f^*(\zeta)$ to induce one on $\xi$, thus it  induces a map of Thom (spectra) spaces.  if $\xi=f^*\zeta$,  we write $f^\zeta$ for this induced map.
\end{description}

\subsection{\textcolor{black}{Hopf's vector field Theorem: Proof of Theorem \ref{splitbyMTW}}}\label{Hopf}
{We recall}  a variant of Hopf's theorem, or rather its useful corollary  for us,
{ which} implies
the main property of transfer \cite[Theorem 5.5]{BG}. Suppose {that} $G$ is a compact Lie group. By a $G$-module $V$, we
 mean a real finite dimensional representation {$V$ of $G$} equipped with $G$-invariant metric. We shall write $S^V$ for the
 $G$-equivariant sphere obtained by one point compactification of $V$. If $F$ is a compact manifold {with a smooth
 $G$ action}, then there exists a $G$-module $V$ together with a $G$-equivariant embedding $i:F\to V$ \cite[Section 2]{BG}. By
 Pontrjagin-Thom construction, this yields a map $i_!:S^V\to F^{\nu_i}$ \textcolor{black}{which we denote by $c$ for simplicity; $i_!$
 is known as umkehr map associated to $i$ (see Subsection \ref{umkehr}).}  {
The embedding $i$ to equip $F$ with a Riemannian metric obtained from the metric of $V$ allows us to consider a trivialisation $\psi:\nu_i\oplus TF\to i^*TV=F\times V$ and
its Thomification  $F^{\nu_i\oplus TF}\to F_+\wedge S^V$ which we still denote by $\psi$}
Finally, let $\pi:F_+\wedge S^V\to S^V$ be {pinch map. }
By Becker-Gottlieb, the following can be obtained from Hopf's vector field theorem.
\begin{thm}\label{Hopfthm}
\cite[Theorem 2.4]{BG} The composition
$$S^V\stackrel{c}{\to} F^{\nu_i}\stackrel{s}{\to} F^{\nu_i\oplus TF}\stackrel{\psi}{\to} F_+\wedge S^V\stackrel{\pi}{\to} S^V$$
where $s$ is the obvious {map obtained by Thomification of the embedding $\nu_i\to\nu_i\oplus TF$}
, is a map of degree $\chi(F)$ - the Euler characteristic of $F$.
\end{thm}
{A few comments are in order.}
\begin{enumerate}
 \item Since $V$ is finite dimensional, the above composite then provides an unstable map.
\item
{For another $G$-module $W$, the composition $F\to
 V\to V\oplus W$ is also a $G$-equivariant embedding.}
The Hopf Theorem then implies that the composition
$$S^{V\oplus W}\to F^{\nu_i\oplus W}\to F^{(\nu_i\oplus TF)\oplus W}
{\to} F_+\wedge S^{V\oplus W}\stackrel{\pi}{\to} S^{V\oplus W}$$
is a map of degree $\chi(F)$.
\item A real finite dimensional inner product space $W$ can be equipped with the trivial $G$-action, thus can be considered as a
`trivial' $G$-module. \label{emb-stable}
\end{enumerate}
The last observation will turn out to be useful while working with twisted Becker-Gottlieb transfer maps.

{We prove  the following to conclude the subsection.}
\begin{thm}\label{splitbyMTW}
Suppose there exists a manifold $M$ with $K$-structure.  Then $S^0$ splits off $MTK$ at a prime $p$ if $p$ doesn't divide
$\chi (M)$.  In particular,
\begin{enumerate}
\item $S^0$ splits off $MTK$ when ${ K}=O(2n)$, $Pin^{+}(4n)$ or $Pin^{-}(4n+2)$ without localisation involved.
\item $S^0$ splits off $MTK$ when ${ K}=SO(2n)$ \opt{local}{after localized at an odd prime.} {if $p$ is odd}.
\item $S^0$ splits off $MTK$ when $K=U(n)$ or $Sp(n)$ { if $p$ doesn't divide $n+1$}.
\end{enumerate}
\end{thm}
\begin{proof}[of Theorem \ref{splitbyMTW}]
{Suppose that $M$ is a manifold with reduction of the structure bundle to $G$.   Choose an embedding of the $m$-dimensional manifold $M$ in an Euclidean space $\R^{m+k}$, say $i:M\lra\R^{m+k}$.  Then denoting $\nu_i$ the normal bundle of the embedding $i$ which we identify with the tubular neighbourhood of $M$ in $\R^{m+k}$, the Thom-Pontrjagin construction provides a map
$$S^{m+k}\lra M^{\nu_i}=M^{\R^{m+k}-TM}.$$
{This construction yields a stable map $S^0\to M^{-TM}$.} 
Denote by  $f:M\lra BG$ the classifying map of the tangent bundle $TM$ of $M$.
Then the stable normal bundle $-TM$ is a pull-back of $-\gamma$ by $f$ where $\gamma $ is the universal vector bundle over $BG$. Consider the Thomified map $M^{-TM} \lra MTG$. }

The composition $S^0\lra M^{-TM} \lra MTG$ is the Madsen-Tillmann-Weiss map associated to the fibre bundle $M\times pt\lra pt$ with $TM$ having a $G$-structure.
It is then a consequence of Hopf's vector filed theorem, \textcolor{black}{stated above as Theorem \ref{Hopfthm} },
that the composition
$$S^0\lra M^{-TM}\lra MTG\stackrel{{\omega}}{\lra} BG_+\stackrel{{c}}{\lra} S^0$$
where ${\omega}:MTG\lra BG_+$ is \textcolor{black}{obtain{ed} by Thomifying the 
inclusion $-\gamma\to (-\gamma)\oplus\gamma$ of bundles over $BG$} and ${c:}BG_+\lra S^0$ the collapse map, is a map of degree $\chi(M)$. Therefore, if $\chi (M)$ is {prime to} $p$, we obtain a splitting of $S^0$ off $MTG$ using the Madsen-Tillmann-Weiss map. Noting that $\R P^{2n}$ has the tangent bundle with structure group $O(2n)$, that can be lifted to $Pin^{\pm}(2n)$ according to the parity of $n$ and $\chi (\R P^{2n})=1$ , we get (i). Noting that $S^{2n}$ has the tangent bundle with structure group $SO(2n)$ and $\chi (SO(2n))=2$, we get (ii). Finally, noting that $\R P^n$, $\C P^n$ and $\Hp P^n$ have the tangent bundle with structure group $O(n)$, $U(n)$ and
$Sp(n)$ respectively,and $\chi (\C P^n)=\chi (\Hp P^n)=n+1$, we get the (iii).
\end{proof}

\subsection{\textcolor{black}{Twisted Becker-Gottlieb transfer maps}}
{We  record some} properties of certain (twisted) transfer maps among Thom spectra that will be useful later on.
\textcolor{black}{Developing and generalising these maps still is a subject of study (see for example \cite[Sections 1.2,4]{ABG}).} 

\subsubsection{The Becker-Gottlieb transfer}\label{BGtransfer}{Let $B$ be a space that admits a filtration by finite subcomplexes (compact subspaces).
  This, of course,  includes the case where $B$ is a finite
complex.
Let $\pi:E\to B$ be a fibre bundle whose {fibre $F$} is a smooth compact manifold.}
Roughly speaking, the transfer construction of \cite{BG} and \cite{BG2}, uses a `geometric integration along fibres' to generalise Theorem \ref{Hopfthm} as follows. The Becker-Gottlieb transfer provides a stable map $t_\pi:B_+\to E_+$ whose
stable homotopy class depends on {the} homotopy class of $\pi$, such that -

\begin{thm}\label{transcomp}\cite[Theorem 5.5]{BG}
The composition
$$B_+\stackrel{t_\pi}{\to}E_+\stackrel{\pi}{\to} B_+$$
induces multiplication by $\chi(F)$ in $\opt{original}{\widetilde}{H}^*(-;\Lambda)$ for any Abelian group $\Lambda$.
\end{thm}
The above theorem implies that if $\chi(F)$ is not divisible by $p$ then
\opt{local}{localised at $p$,} $B_+$ splits off $E_+$. {Moreover, for a variety of reasons, it is useful to twist the above transfer map with a} (virtual) {vector} bundle $\zeta\to B$ {whose construction we postpone to} Subsection \ref{umkehr}
In this case, one obtains a stable map
$$t_\pi^\zeta:B^\zeta\lra E^{\pi^*\zeta}$$
which we call the twisted Becker-Gottlieb transfer{, and whose stable homotopy class depends on homotopy class of $\pi$ and the bundle isomorphism class of $\zeta$}. We have: 
\begin{thm}\label{twistedtranscomp}
The composition
$$B^\zeta\stackrel{t_\pi^\zeta}{\to}E^{\pi^*\zeta}\stackrel{\pi^{{\zeta}}}{\to} B^\zeta$$
induces multiplication by $\chi(F)$ in  $\opt{original}{\widetilde}{H}^*(-;\Lambda)$ for any Abelian group $\Lambda$ where $\pi^{{\zeta}}$ denotes the induced map among Thom spectra. Consequently, if $\chi(F)$ is
{prime to $p$,}
\opt{local}{localised at $p$,}$B^\zeta$ splits off $E^{\pi^*\zeta}$.
\end{thm}
\begin{proof}
\textcolor{black}{By Lemma \ref{multiplicative1}
$${t_\pi^\zeta}^*( \pi^{\zeta *} (x)\cup y)=x\cup t_{\pi}^*(y)$$
for all $x\in H^*B^{\zeta}$ and $y\in H^*E$ where $\cup$ on the left side of the equation denotes the generalized cup product
{$H^*(E^{\pi^*\zeta})\otimes H^*(E)\to H^*(E^{\pi^*\zeta})$ (c.f. Subsection \ref{rec:Thom}).}
The rest follows by setting $y=1$, and noting that $t_{\pi}^*(1)=\chi(F)$ as in the proof of \cite[Theorem 5.5]{BG}.}
\end{proof}


\begin{exm}\label{splitMGtwisted}
Let $G$ be a compact Lie group, $K\subset G$ a closed subgroup{, $V$ } a (virtual) representation of $G$. Then, \opt{local}{{localised at $p$},} $BG^V$ splits off $BK^{V|{_K}}$ if $p\nmid \chi (G/K)$. {In particular,
{if we denote by $N_G(T)$  the normaliser of a maximal torus $T$}, one has $\chi(G/N_G(T))=1$ \cite[Section 6]{BG}. {Thus}  $BG^V$ splits off $BN_G(T)^{V|_{N_G(T)}}$.}
\end{exm}

\subsection{Multiplicative properties of twisted Becker-Gottlieb transfer}\label{BGconstruction1}
{We record some  multiplicative properties of twisted transfer maps,
analogous to the multiplicative properties of the usual Becker-Gottlieb transfer ( \cite[Sections 3,5]{BG}).
They follow immediately from the construction.}

\begin{enumerate} \item
{Suppose $\pi_i:E_i\to B_i$, $i=1,2$ are fibre bundles as above, with $\zeta_i $ (virtual) vector bundles
over $B_i$.}
Suppose further that we have a map of fibre bundles given by the following commutative square
$$
\begin{diagram}
\node{E_1} \arrow{e,t}{h_E}\arrow{s,t}{\pi_1}\node{E_2} \arrow{s,t}{\pi_2} \\
\node{B_1} \arrow{e,t}{h_B}                  \node{B_2}
\end{diagram}
$$
so that the maps $h_E$ and $h_B$ are covered by bundle maps
{$\pi_1^*\zeta_1\to\pi_2^*\zeta_2$ and $\zeta_1\to\zeta_2$}. This yields a commutative square as
$$
\begin{diagram}
\node{B_1^{\zeta_1}}\arrow{e,t}{h_B}\arrow{s,t}{t^{\zeta_1}_{\pi_1}} \node{B_2^{\zeta_2}} \arrow{s,t}{t^{\zeta_2}_{\pi_2}} \\
\node{E_1^{\pi_1^*\zeta_1}}\arrow{e,t}{h_E}                          \node{E_2^{\pi_2^*\zeta_2}}
\end{diagram}
$$
where we have retained $h_E$ and $h_B$ for the Thomified maps. This is analogous to \cite[(3.2)]{BG}.
\item  Next, note that for a fibre bundle $\pi:E\to B$ and a {CW complex $X$ a}dmitting a filtration by finite subcomplexes (compact subspaces), we may consider the fibre bundle $\pi\times 1_X:E\times X\to B\times X$, as well as the vector bundle $\zeta\times 0\to B\times X$. We then have
$$t^{\zeta\times 0}_{\pi\times 1_X}=t_\pi^\zeta\wedge 1:B^\zeta\wedge X_+\to E^{\pi^*\zeta}\wedge X_+.$$
This generalises to $t_{\pi_1\times\pi_2}^{\zeta_1\times\zeta_2}=t_{\pi_1}^{\zeta_1}\wedge t_{\pi_2}^{\zeta_2}$ as
 \cite[(2.2)]{BM}, but we only use the {special} case of $t^{\zeta\times 0}_{\pi\times 1_X}$.
\item  Finally, for the trivial bundle $\pi:F\to \{0\}$, identifying $\{0\}_+=S^0$, the composition $\pi \circ t_\pi:S^0\to S^0$
 has degree $\chi(F)$ \cite[(3.4)]{BG}. Note that this is just Hopf's vector field theorem \textcolor{black}{\ref{Hopfthm}}.
\end{enumerate}
\textcolor{black}{As an application, properties (1) and (2) can be used to prove a multiplicative formula for the (co)homology of twisted transfer maps. We have the following.}
\begin{lmm}\label{multiplicative1}
Suppose $\pi:E\to B$ and $\zeta\to B$ are as above. For $x\in H^*B^{\zeta}$ and $y\in H^*E$, we have
$${t_\pi^\zeta}^*( {\pi^\zeta}^*(x)\cup y)=x\cup {t_\pi}^*(y).$$
Here $\cup$ \textcolor{black}{on the left is a `generalised' cup product
{$$H^*(E^{\pi^*\zeta})\otimes H^*(E)\to H^*(E^{\pi^*\zeta})$$}
whereas the $\cup$ on the right is the usual cup product induced by the usual diagonal.}
\end{lmm}

\begin{proof}
For a fibre bundle $\pi:E\to B$, and a twisting vector bundle $\zeta\to B$, consider $1_B\times \pi:B\times E\to B\times B$. Note that the diagonal map
$d_B:B\to B\times B$ and $(\pi\times 1_E)d_E$ induce a map of fibre bundles
$$
\begin{diagram}
\node{E} \arrow{e,t}{d_E}\arrow{s,t}{\pi}\node{E\times E}\arrow{e,t}{\pi \times 1_E}\node{B\times E} \arrow{s,t}{1_B \times \pi} \\
\node{B} \arrow[2]{e,t}{d_B}                                                      \node[2]{B\times B.}
\end{diagram}
$$
Noting that the vector bundle $0\to B$ is just the identity map $1_B:B\to B$, we see that the horizontal maps are covered by bundle maps
$\zeta\to\zeta\times 0$ and $\pi^*\zeta\to( 1_B\times \pi)^*(\zeta\times 0)=\zeta\times 0$. Now, the above diagram, upon applying properties (1) and (2), yields a commutative diagram as
$$
\begin{diagram}
\node{B^\zeta}       \arrow[2]{e,t}{d_B}\arrow{s,t}{t_\pi^\zeta}   \node[2]{B^\zeta\wedge B_+}\arrow{s,b}{1_{B^\zeta}\wedge t_\pi=t^{\zeta\times 0}_{1_B\times\pi}}\\
\node{E^{\pi^*\zeta}}\arrow{e,t}{d_E}\node{E^{\pi^*\zeta}\wedge E_+}\arrow{e,t}{{\pi^{\zeta}}\wedge
 1_E}\node{B^{\zeta}\wedge E_+}
\end{diagram}
$$
where the horizontal arrows are the Thomification of the horizontal maps in the previous diagram.
The lemma follows, upon taking (co)homology of the above diagram together, using standard properties of products
{noting that $d_B$ and $d_E$ are the \hyperlink{Tddef}{Thom diagonals}} \ref{rec:Thom}.
\end{proof}



\subsection{Umkehr maps: Factoring transfer maps and Gysin homomorphism}\label{umkehr}
The material here ought to be standard and well known; we include a discussion for future reference. We begin with umkehr maps. Suppose $f:E\to B$ is an embedding of compact closed manifolds {with the normal bundle$\nu_f$. The} Pontrjagin-Thom construction yields a map $f_!:B_+\to E^{\nu_f}$
{called} the umkehr map associated to $f$. Moreover, if $\zeta\to B$ is some (virtual) {vector} bundle, then the above construction {may} be twisted to provide an umkehr map \cite{CohenKlein},
\cite[(4.4)]{BeckerSchultz1}
$$f_!^\zeta:B^\zeta\lra E^{\nu_f\oplus \zeta|_E}.$$
Suppose { this time that} $F\to E\stackrel{\pi}{\to} B$ is a fibre bundle
{such that $F$, $E$ and $B$ are all compact closed finite dimensional  manifolds.}
Since $E$ is compact, by Whitney's embedding theorem, we may choose an embedding $\iota:E\to\R^k$, which allows us to extend $\pi$ to an
embedding $j:E\to B\times\R^k$, say $j=(\pi,\iota)$. The umkehr map for such an embedding is a map
$$\textcolor{black}{j_!}:(B\times\R^k)_+=B^{\R^k}\to E^{\nu_j}.$$
The stable normal bundle does not depend on a particular embedding, hence stablising the embedding $\iota$ by composition with embeddings $\R^k\to\R^{k+l}$, $l>0$, we see that the stable homotopy class of $j_!$ does not depend on a particular choice of an embedding $E\to\R^k$. By the discussion of \cite[Section 4]{BG} (see also \cite[Section 2]{BM}) {the stable homotopy class of the composition
$$\Sigma^kB_+=(B\times\R^k)_+\stackrel{\textcolor{black}{j_!}}{\to} E^{\nu_j}\to E^{{\R^k}}=(E\times\R^k)_+=\Sigma^kE_+,$$
is the same as the stable homotopy class of the Becker-Gottlieb transfer $E_+\stackrel{t_\pi}{\to} B_+$. Here, the map $E^{\nu_j}\to E^{{\R^k}}$ is obtained by the decomposition of the trivial bundle $E\times\R^k\simeq\nu_j\oplus N_\pi E$ where $N_\pi E$ is the orthogonal complement of the tangent bundle of $E$ along the fibre of $\pi$; we can talk about $N_\pi E$ as $E$ is compact and has a Riemannian metric.} Moreover, for a (virtual) bundle $\zeta\to B$, using the axial embedding of bundles $\R^k\to\R^k\oplus \zeta$ of bundles over $B$, the Pontrjagin-Thom construction yields a (stable) map
$$\textcolor{black}{j_!^\zeta}:B^{\R^k\oplus \zeta}\to E^{\pi^*\zeta\oplus \nu_j}.$$
{The umkehr map above, leads {to a stable map that}  we call Boardman transfer associated to $\pi$ twisted by $\zeta$ \cite{Boardmanthesis} whose stable homotopy class depends {only} on homotopy class of $\pi$ as well as isomorphism class of $\zeta$ (here, similar to the untwisted case, {l}etting $k$ {large enough} allows to work with the stable normal bundle which is independent of the embedding).} Similarly, the inclusion of bundles $\nu_j\to j^*(\R^k\times B)$ allows us to consider the composition
$$\Sigma^k B^\zeta=B^{\R^k\oplus \zeta}\stackrel{\textcolor{black}{j_!^\zeta}}{\to} E^{\pi^*\zeta\oplus \nu_j}\to E^{\R^k\oplus \pi^*\zeta}=\Sigma^k E^{\pi^*\zeta}$$
which agrees with the twisted Becker-Gottlieb transfer $t_\pi^\zeta:B^\zeta\to E^{\pi^*\zeta}$; this latter being a mere generalisation of the first
factorisation \textcolor{black}{similar to} \cite[Section 6]{BG}. 
We record this for future reference.
\begin{prop}\label{BGfactorisation}
Suppose $\pi:E\to B$ is a fibre bundle whose fibre $F$ is a compact closed manifold, where $B$
is a space that admits a filtration by compact closed manifolds. Moreover, suppose $\zeta\to B$ is a (virtual) vector bundle. Then the (twisted) Becker-Gottlieb transfer $t_\pi^\zeta$ factors through a some 
umkehr map.
\end{prop}

As an immediate application, the above \textcolor{black}{factorisations} allow to relate cohomology of transfer maps to the well known Gysin homomorphisms (also known as the integration along fibers). {Assum}ing
 that $F$ is $n$-dimensional,
$\nu_j$ is
 $(k-n)$-dimensional.
{suppose that} $\pi$ is $R$-orientable for a ring spectrum $R$, that is, if the fiberwise tangent bundle $T_\pi E$ is
\hyperlink{DefTiso}{$R$-orientable} in the usual sense.
{T}hen
{using the  Thom isomorphism $Th:R^*E^{\nu_j}\to R^{*-k+n}E_+$ and the umkehr map $j_!:(B\times\R^k)_+=B^{\R^k}\to E^{\nu_j}$ 
we can define the Gysin homomorphism $\pi_!$ by the composition} 
$${\pi_!= R^{*}(j_!)\circ Th^{-1} :R^{*-k+n}E_+\to R^{*+k}B_+}$$
(see also \cite[Section 4]{ABG}). {We are using the same notation $( )_{!}$ for Gysin homomorphism and umkehr map following
usual conventions, but this shouldn't cause confusion, as the Gysin homomorphism only appear on this page.}

Let $e=e(T_\pi E)\in R^nE_+$ {denote} the Euler class of $T_\pi E$. We then have the following.
\begin{thm}\label{pushforward}
\cite[Theorem 4.3]{BG}
\begin{enumerate}
\item For the Becker-Gottlieb transfer $t_\pi$ we have $t_\pi^*(x)=\pi_!(x\cup e)$.
\item Suppose $\zeta\to B$ is a vector bundle for which Thom isomorphism in $R$-homology holds. Then, for the twisted Becker-Gottlieb transfer $t_\pi^\zeta:B^\zeta\to E^{\pi^*\zeta}$ we have
$${t_\pi^\zeta}^*(x)=\pi_!^\zeta(x\cup e^\zeta)$$
where {$\pi^\zeta_!= R^{*}(j_!^\zeta)\circ (Th^\zeta)^{-1}:R^{*-k+n}E^{\pi^*\zeta}\to R^{*+k}B^\zeta$}  is the twisted Gysin
homomorphism. Here, $Th^\zeta:R^*E^{\nu_j\oplus \pi^*\zeta}\to R^{*-k+n}E^{\pi^*\zeta}$ is the twisted Thom isomorphism, and
$e^\zeta\in R^{n+\dim\zeta}E^{\pi^*\zeta}$ is the image of the Euler class under the Thom isomorphism $R^nE_+\to
R^{n+\dim\zeta}E^\zeta$.
\end{enumerate}
\end{thm}
Part (ii) of the above theorem, like part (i), follows from the above decomposition of the twisted Becker-Gottlieb transfer $t_\pi^\zeta$ through the umkehr map $t^\zeta$ together with an application of the Thom isomorphism, which we refer to the reader to fill in the details in the same manner as \cite{BG} (by twisting  when necessary). Note that choosing $\zeta=0$ yields part (i).\\

\textcolor{black}{Furthermore}, suppose $\xi\to E$ and $\zeta\to B$ are (virtual) {vector} bundles together with a choice of a (relative) trivialisation/framing $\phi:\xi\oplus \R^k\cong\pi^*\zeta\oplus \nu_j$. {Using the relative framing $\phi$, the twisted umkehr ${j_!^\zeta}:B^{\R^k\oplus \zeta}\to E^{\pi^*\zeta\oplus \nu_j}$ provides us with a (stable) map
$$\textcolor{black}{t^{\zeta,\xi}_{\pi,\phi}:}B^\zeta\lra E^\xi$$
referred to as the transfer map associated to the data $(\xi,\zeta,\pi,\phi)$ \cite[(1.5)]{Milequi}, \cite[(2.1)]{Mil2}}.

\begin{rem}\label{BGinfinitecomplex1}
Let's conclude by noting that the compactness assumption above allows to choose $k<+\infty$ and in fact obtain an unstable map
whose stable homotopy class depends on $\pi$ and the twisting bundle $\zeta$. In general, if $E$ and $B$ admit a filtration by finite
subcomplexes or compact submanifolds then inductively by restricting to each filtration, one obtains a collection of maps which
determine a map of Thom spectra whose stable homotopy class depends on $\pi$ and the twisting bundle $\zeta$.
\end{rem}

\subsection{Transfer for Lie groups}
We are interested in the cases that our fibrations arise from Lie groups. {For a compact Lie group} $G$ and $K$ a closed subgroup, we have a fibre bundle
$$G/K\lra BK\stackrel{{\pi}}{\lra} BG.$$
{The classifying spaces $BK$ and $BG$ are not finite dimensional, but admit filtration by finite compact submanifolds. By the discussion in Subsection \ref{BGtransfer} we may speak of the associated Becker-Gottlieb transfer map $BG_+\to BK_+$ which enjoys main the properties of the transfer such as \cite[Theorem 5.5]{BG} (see also Theorem \ref{twistedtranscomp}) which shows that if $\chi (G/K)$ is {prime to} $p$, then $BG_+$  splits off $BK_+$
\opt{local}{(or splits when localised at $p$)}. Such phenomenon is well known, and in the case where $G$ is finite, has been used extensively to study the stable homotopy type of the classifying space $BG$ (e.\,g.\,\cite{SplitLecture}). The case when $G$ is not finite, is also well-known, and for example, it has been shown \cite[Lemma 1]{Yan} that $BSO(2n+1)_+$ splits off $BO(2n)_+$ (this splitting occurs without localisation) and $BSU(n+1)_+$ splits off $BU(n)_+$ unless $p$ divides {$n+1$}.

\subsubsection{Becker-Schultz-Mann-Miller-Miller transfer}\label{BSMMM}
Let $G$ be a compact Lie group and  $M$ a smooth compact manifold with free $G$ action. Consider $\mathfrak{g}$, the Lie algebra of $G$, with an action of $G$ through the adjoint representation. Let $\mu_G=M\times_G\mathfrak{g}\to M/G$ be the adjoint bundle associated to the fibre bundle $M\to M/G$. 
{Note that we may approximate/filter $EG$ by compact manifolds; we shall write $\ad_G=EG\times_G\mathfrak{g}$ for the vector bundle associated to the adjoint representation of $G$ over $BG$ noting that on compact manifold $M$ approximating $EG$ it restrict to $\mu_G$ over $M$}. By compactness of $M$, we may
assume that $M$ has a Riemannian metric. Note that $G$ acts on the tangent bundle $TM$. By the existence of Riemannian metric on $M$, it appears that there is a decomposition of bundles over $M$ \cite[(3.1)]{BeckerSchultz1}, \cite[Lemma 2.1]{Milequi}
\hypertarget{tmgdec}{$$TM/G\cong \mu_G\oplus T(M/G)$$}.
Now, suppose $K<G$ is a closed subgroup and consider the fibre bundle $\pi:M/K\to M/G$. By the umkehr map construction, for some embedding $j:M/K\to\R^k\times M/G$, and a (virtual) vector bundle $\zeta\to M/G$, we have a transfer map
$$(M/G)^{\R^k\oplus \zeta}\lra (M/K)^{\nu_j\oplus \pi^*\zeta}.$$
Now, we have the following commutative diagram.
$$\begin{diagram}
\node{}  \node{}\node{T(M/K)\oplus \nu _j }\arrow{wsw}\arrow{s}\arrow{ese} \node{} \node{} \\
\node{T(\R ^k \times M/G)}\arrow{s}\arrow[4]{e,..} \node{} \node{M/K}\arrow{wsw,r}{j}\arrow{ese,b}{\pi} \node{}
\node{T(M/G)\oplus \R^k}\arrow{s}\\
\node{\R ^k \times M/G}\arrow[4]{e,t}{\pi _2} \node{} \node{} \node{} \node{M/G}
  \end{diagram}
$$
Here $\pi _2$ denotes the projection to the second factor, and all parallelograms are pull-back squares.  Thus we have
$T(M/K)\oplus \nu_j{\simeq j}^*T(M/G\times\R^k){\simeq}\pi^*(T(M/G)\oplus \R^k)$. Now, by plugging in the
\hyperlink{tmgdec}{above decomposition} for $TM/G$ as well as $TM/K$,  we {get} a relative framing
$$\pi^*\mu_G\oplus \nu_j=\mu_K\oplus \R^k.$$
Hence, replacing $\zeta$ with $\mu_G\oplus \alpha$ for {an} arbitrary virtual bundle $\alpha\to M/G$, together with the above relative framing, we obtain a transfer map {as in \cite[(3.7)]{BeckerSchultz1}, \cite[Section 2]{Milequi}}
$$(M/G)^{\mu_G\oplus \alpha}\lra (M/K)^{\mu_K\oplus \pi^*\alpha}.$$
{ We call it} Becker-Schultz-Mann-Miller-Miller transfer {(BSMMM transfer for short)} associated to $\pi$ and twisted with $\alpha$. If we choose $M$ to be a contractible space with a $G$ action, then by approximating $M$ with compact submanifolds, as discussed in Remark \ref{BGinfinitecomplex1}, we may consider a transfer map
$$BG^{\ad_G\oplus \alpha}\lra BK^{\ad_K\oplus \alpha|_K}$$
where $\alpha|_K=\pi^*\alpha$. It may seem that the approximation by compact submanifolds, the transfer map above may not be uniquely determined. However, the graph construction of Becker and Shultz \cite[Section 6]{BeckerSchultz1} (see also \cite[Theorem 3.4 and Theorem 3.9]{Milequi}) allows to obtain a unique stable map, up to weak homotopy equivalence. We note that if $\alpha$ corresponds to a representation $\phi $ of $G$, then $\alpha|_K$ corresponds to the restriction $\phi|_K$, and that the adjoint bundle corresponds to the adjoint representation. We {have seen} that for a fibre bundle $E\to B$ and
{a} twisting bundle $\alpha\to B$, the twisted Becker-Gottlieb transfer admits a factorisation through a suitable umkehr map. This is also the case when we work with compact Lie groups; the Becker-Gottlieb transfer admits a factorisation through BSMMM transfer. The following has to be well known, but we don't know of any published account.
\begin{prop}\label{transferfactor}
Suppose {$G$  and $K$} are as above. Then 
the Becker-Gottlieb transfer $BG_+\to BK_+$ admits a factorisation {through} the BSMMM transfer as
$$BG_+\lra BK^{\ad_K-{\ad_G}|_K}\to BK_+.$$
Similarly, for arbitrary $\eta\to BG$, the twisted Becker-Gottlieb admits a factorisation as
$$BG^{\eta}\to BK^{\eta|_K\oplus \ad_K-{\ad_G|}_K}\lra BK^{\eta|_K}.$$
\end{prop}
\begin{proof}
First, note that for $K<G$, $\pi:BK\to BG$, and $\alpha\to BG$, the twisted Becker-Gottlieb transfer $t_\pi^\alpha:BG^\alpha\to
 BK^{\pi^*\alpha}$, agrees with the Becker-Gottlieb transfer if we choose $\alpha=0$, i.e. choose $\alpha$ to {the} trivial bundle.
 Second, recall that the twisted Becker-Gottlieb transfer admits a factorisation as
$$\Sigma^k B^\zeta=B^{\R^k\oplus \zeta}\to E^{\pi^*\zeta\oplus \nu_j}\to E^{\R^k\oplus \pi^*\zeta}=\Sigma^k E^{\pi^*\zeta}$$
for any fibre bundle $\pi:E\to B$ over some $B$ admitting a filtration by compact subspaces, where $\zeta\to B$ is some {(virtual) vector bundle. By choosing $M$ a compact manifold on which $G$ acts freely, $\pi:E\to B$ the fibre bundle $\pi:M/K\to M/G$, and replacing $\zeta$ with $\mu_G\oplus\alpha$ and using the relative framing $\pi^*\mu_G\oplus \nu_j=\mu_K\oplus \R^k$, we have a factorisation
$$\Sigma^k (M/G)^{\mu_G\oplus\alpha}=B^{\R^k\oplus \mu_G\oplus\alpha}\to (M/K)^{\alpha|_K\oplus\mu_K}\to (M/K)^{\R^k\oplus{\mu_G}|_K\oplus\alpha|_K}.$$}
{By allowing $M$ to approximate $EG$, hence eventually taking $M=EG$, we obtain a factorisation of the twisted Becker-Gottlieb transfer as
$$BG^{\R^k\oplus \ad_G\oplus \alpha}\to BK^{\R^k\oplus \ad_K\oplus \alpha|_K}\to BK^{\R^k\oplus {\ad_G}|_K\oplus \alpha|_K}$$}
which upon choosing $\alpha=-\ad_G$ yields the first factorisation. For the second factorisation, it follows if we simply replace $\alpha$ by {$\eta -\ad_G$}.
\end{proof}
The above proposition, has the following immediate corollary.

\begin{cor}\label{transfersplit}
Let $G$ be a compact Lie group, $K$ its closed subgroup, such that $\chi (G/K)$ is prime to $p$.  Then, localised at the prime $p$,
$BG_+$ splits off $BK^{(\ad_K-\ad_G|_K)}$.
\end{cor}

\subsection{Cofibre of transfer maps}\label{cofibresec}
Suppose we have a fibration $F\to E\to B${, with $\chi(F)$ prime to  $p$. Then}
 the Becker-Gottlieb transfer $t: B_+\to E_+$ transfer provides a stable splitting of $E_+$ off $B_+$.  The other summand
is just the cofibre of $t$, that is, we have a homotopy equivalence $E_+\simeq B_+\vee C_t$.
A similar statement holds, if we replace the Becker-Gottlieb transfer with a twisted BSMMM transfer.
Morisugi's cofibration \cite[Theorem 1.3]{Morisugi} allows us to identify this cofibre in favorable cases. By setting
$E=EG$ in loc.cit. Theorem 1.3, we obtain:
\begin{thm}\label{Mori}
(\cite[Theorem 1.3]{Morisugi})
{Let $G$ be a compact Lie group, $K$ its closed subgroup, such that
there exists a  $G$-representation $V$ with  $G/K=S(V)$ as $G$-spaces,
where $S(V)$ is the sphere in $V$ with a certain $G$-invariant metric. Let $\alpha$ be a vector bundle over $BG$.
Denote by $\lambda$ the vector bundle over $BG$ induced by the representation $V$,  $EG\times_G V\to BG$. Then, there exists a cofibration of spectra:}
$$BG^{\ad_G\oplus \alpha-\lambda}\lra BG^{\ad_G\oplus \alpha}\stackrel{t_K^G}{\lra}BK^{\ad_K\oplus \alpha|_K}\lra
\textcolor{black}{BG^{\R \oplus \ad_G\oplus \alpha-\lambda}\cong \Sigma BG^{\ad_G\oplus \alpha-\lambda}}$$
\end{thm}
 The following lemma provides an application of the above theorem.
\begin{lmm}\label{cofibreoftransfer}
Let ${\mathbf K}=O,SO,Pin^{+},Pin^{-},Spin,U$, or $Sp$. Then there is a cofibration sequence of  spectra
$$MT{\mathbf K}(n+1)\stackrel{\omega}{\lra} B{\mathbf K}(n+1)_+
\stackrel{t}{\lra} \Sigma^{1-d}MT{\mathbf K}(n)\stackrel{j}{\lra} \Sigma MT{\mathbf K}(n+1)$$
where $d=1$ if ${\mathbf K}=O,SO,Pin^{+},Pin^{-}$ or $Spin$, $d=2$ if ${\mathbf K}=U$ or $SU$, and $d=4$ if ${\mathbf K}=Sp$.
\end{lmm}
\begin{proof}First we deal with the case ${\mathbf K}=O,SO,U$ and $Sp$.
Let $\F =\R$ if ${\mathbf K}=O,SO$, $\C$ if ${\mathbf K}=U$, $\Hp$ if ${\mathbf K}=Sp$.
Thus $\F$ is $d$-dimensional vector space over $\R$.  The group ${\mathbf K}(k)$ admits a canoical representation
$\gamma _k^{\F}$ on $\F ^k$, and the corresponding group action preserves the metric.
Thus the ${\mathbf K}(k)$
action on $\F ^k$ restricts to a transitive ${\mathbf K}(k)$ action on the sphere $S^{V}$ where $V$ is $\F ^k$ viewed as
${\mathbf K}(k)$-space.
Now, set $k=n+1$.  The isotropy
subgroup of any unit vector is isomorphic to  ${\mathbf K}(n)$, so we have ${\mathbf K}(n+1)
{\mathbf K}(n)/\cong S(V)$.  Thus we can apply
Theorem \ref{Mori} with $G={\mathbf K}(n+1), K={\mathbf K}(n), \lambda =\gamma _{n+1}^{\F}$.  Set $\alpha=-\ad_G$ for the twisting bundle. It now remains to identify its restriction $\alpha |K$ or its inverse $\ad_G|K$. We have
$$
   \left(
\begin{array}{rr}
X & \\
 & 1
         \end{array}
\right)
\left(
\begin{array}{rr}
A & B \\
{-}B^{\ast } & D
         \end{array}
\right)
 \left(
\begin{array}{rr}
X ^{-1}& \\
 & 1^{}
         \end{array}
\right)=
\left(
\begin{array}{rr}
XAX^{-1}
 & {}XB \\
{-}({}XB)^{\ast } & D
         \end{array}
\right) ,$$
where $X, A$ are $n\times n$ matrices, $B$ is a $n\times 1$ matrix, and $W,D$ are $1\times 1$ matrices with coefficients
in $\F$.
Furthermore, for the $\left(
\begin{array}{rr}
A & B \\
{-}B^{\ast } & D
         \end{array}
\right)$ to lie in the appropriate Lie algebra, we must have $A^*=-A$, $D^*=-D$.  Thus
$A$ is an element of the Lie algebra of $K$.  The block $XB$ corresponds to the canonical representation
$\gamma _n^{\F}$ whereas the block $D$ corresponds to a trivial representation of appropriate dimension.  $D\in \F$ with
$D^*=-D$, so the dimension is $d-1$.  Thus we see that $ad_G|K=ad_K\oplus \gamma _n^{\F}\oplus \R ^{d-1}$.  This
concludes the proof in the cases considered.

The cases ${\mathbf K}=(S)Pin^{\pm}$ follow from the case ${\mathbf K}=(S)O$ noting that the canonical and adjoint
 representations factors through those of the latter.  Finally, the case ${\mathbf K}=SU$ can be handled as in the above,
noting that $\mathfrak{u}(n)=\mathfrak{su}(n)\oplus \mathfrak{u}(1)$ as $SU(n)$-representation,
the splitting map $ \mathfrak{u}(1)\to \mathfrak{u}(n)$ being given by the diagonal divided by $n$.

We note that when ${\mathbf K}=SO,Spin$ or $SU$ and $n=0$, our definition of $B{\mathbf K}(0)$ makes the sequence
$S^d\rightarrow B{\mathbf K}(n)\rightarrow B{\mathbf K}(n+1)$ a fibration.  Thus we can modify the proof of \cite[Theorem 1.3]{Morisugi}
to fit our case.
\end{proof}
\begin{rem}
We note that the cofibre sequences above coincide with those of \cite[Proposition 3.1]{GMTW} when ${\mathbf K}=O$ or $SO$,
and give rise to the fibration of infinite loop spaces as in \cite[(1.3)]{Gspin} when ${\mathbf K}=Spin$ and $n=1$.
To verify these claims, let's note that both the cofibrations of \cite{GMTW} and Morisugi's cofibration are obtained by appealing to James' cofibration. Recall that, for a vector bundle $p:V\to B$ and a (virtual) vector bundle $W\to B$ over a finite complex $B$, there is a cofibre sequence of spectra, due to James:
$$S(V)^{p^*W}\lra B^W\lra B^{V\oplus W}\lra \Sigma S(V)^{p^*W}$$
where $S(V)\to B$ is the sphere bundle of $V$, and the map $B^W\to B^{V\oplus W}$ is  induced by the embedding $W\to V\oplus W$. Hence, it is standard to see that, the map $MTO(n)\to BO(n)_+$ in \cite[Proposition 3.1]{GMTW} and $BG^{\ad_G\oplus \alpha-\lambda}\to BG^{\ad_G\oplus \alpha}$ in \cite[Theorem 1.3]{Morisugi} are both Thomification of the obvious embedding.
Now an easy application of five lemma show that the two cofibre sequences are isomorphic.
The other case, follows by similar considerations.
\end{rem}

As a non-example, where Morisugi's result does not apply, at least integrally, consider embedding of $K=O(n)$ in ${\mathbf
 G}=SO(n+1)$ by $X\mapsto (\det X)(X\oplus 1)$. We have $G/K=\R P^n$ which cannot be identified as a sphere in some vector space
 as $\pi_1\R P^n\simeq\Z/2$.

\subsection{Cohomology of Madsen-Tillmann spectra}
Let ${\mathbf K}=SU,U$ or $Sp$ and $k$ be an arbitrary {field}, or ${\mathbf K}=O$ and $k$ a field of characteristic $2$, and
$d$ be as in Lemma \ref{cofibreoftransfer}. Then we have the following.
\begin{lmm}\label{cohomologymt}
The cofibration $\Sigma ^{-d}MT{\mathbf K}(n-1)\rightarrow MT{\mathbf K}(n)\rightarrow \Sigma ^{\infty} B{\mathbf K}(n)_+$ gives rise to a short exact sequence in cohomology
$$H^*(\Sigma ^{\infty} B{\mathbf K}(n)_+;k)\rightarrow H^*(MT{\mathbf K}(n);k)\rightarrow H^*(\Sigma ^{-d}MT{\mathbf K}(n-1);k),$$
and dually to a short exact sequence in homology
$$H_*(\Sigma ^{-d}MT{\mathbf K}(n-1);k)\rightarrow H_*(MT{\mathbf K}(n);k)\rightarrow H_*(\Sigma ^{\infty}B{\mathbf K}(n)_+;k).$$
Therefore, as graded $k$-vector spaces, we have isomorphisms
$$H_*(MT{\mathbf K}(n);k)\cong \oplus _{j=0}^n \Sigma ^{-dj}H_*(B{\mathbf K}(j);k),H^*(MT{\mathbf K}(n);k)\cong \oplus _{j=0}^n \Sigma ^{-dj}H^*(B{\mathbf K}(j);k).$$
\end{lmm}
\begin{proof}
We have $H^*(B{\mathbf K}(n);k)\cong k[z_i,\cdots ,z_n]$ where $i=2$ if ${\mathbf K}=SU$ and $i=1$ otherwise, with degree of polynomial generators $z_m$ being equal to $dm$. By the Thom isomorphism, we have $H^*(MT{\mathbf K}(n);k)\cong z_n^{-1}k[z_1,\cdots ,z_n].$  Here the notation means the free $k[z_1,\cdots ,z_n]$ module generated by one element $z_n^{-1}$, and we can consider that this is included in an appropriate localisation of $H^*(B{\mathbf K}(n);k)$.  Similarly we have $H^*(MT{\mathbf K}(n-1);k)\cong z_{n-1}^{-1}k[z_1,\cdots ,z_{n-1}]$. Since the canonical representation $\gamma _n$ of ${\mathbf K}(n)$ pulls back to $\gamma _{n-1}\oplus \R$ over ${\mathbf K}(n-1)$, the map $ H^*(MT{\mathbf K}(n);k)\rightarrow H^*(\Sigma ^{-1}MT{\mathbf K}(n-1);k)$ is given by $z_n^{-1}f(z_1{,}\cdots z_{n-1},z_n)\rightarrow \sigma ^{-1}z_{n-1}^{-1}f(z_1{,}\cdots{,} z_{n-1},0)$ where {$f$ is a polynomial in the
cohomology ring and} $\sigma $ denotes the suspension isomorphism which is surjective{. Thus the kernel is clearly
 isomorphic to $k[z_1,\cdots ,z_n]$ which is isomorphic to $H^*(B{\mathbf K}(n);k)$, which
implies  the exactness in the cohomology. By} dualising we get the result in homology. The last statement follows by induction on $n$.
\end{proof}

\section{Splitting Madsen-Tillmann spectra}\label{transfersplitsection}
In this section we deduce the splitting of Madsen-Tillmann spectra from the general theory of splitting of Thom spectra
First, we have:
\begin{thm}\label{thsplitmainallprestated}
\begin{enumerate}
 \item
 Suppose $(K,G)$ is one of the pairs
$$(O(2n),SO(2n+1)),\ (Pin^{+}(4n),Spin(4n+1)),\ (Pin^{-}(4n+2),Spin(4n+3)).$$
Then $BG_+$ stably splits off $MTK$ with no localisation involved.
\item Let $p$ be an odd prime. Let $(K,G)$ be one of the pairs $(SO(2n),SO(2n+1))$, equivalently $(Spin(2n),Spin(2n+1))$, or $(O(2n),O(2n+1{)})$. Then
\opt{local}{ localized at an odd prime $p$,} 
we have $MTK\simeq BG_+\vee\Sigma MTG$.
Furthermore, 
the splitting of $MTO(n)$ reduces to
$$MTO(2n)\simeq BO(2n)_+,\ MTO(2n-1)\simeq *.$$
\end{enumerate}
\end{thm}
We begin with proving part (i) of the Theorem. Consider the embedding 
$$O(2n)\ni X \mapsto j(X)=(\det X)(X\oplus 1)\in SO(2n+1).$$
 One sees that the fibre of $Bj$ is
$$SO(2n+1)/O(2n)\cong \R P^{2n}$$  with $\chi (\R P^{2n})=1$ (with any coefficient). Furthermore, we have
$$
   \left(
\begin{array}{rr}
X & \\
 & W
         \end{array}
\right)
\left(
\begin{array}{rr}
A & B \\
{-}B^{\ast } & D
         \end{array}
\right)
 \left(
\begin{array}{rr}
X ^{-1}& \\
 & W^{-1}
         \end{array}
\right)=
\left(
\begin{array}{rr}
XAX^{-1}
 & W^{-1}XB \\
{-}(W^{-1}XB)^{\ast } & D
         \end{array}
\right) ,$$
where $X, A$ are $2n\times 2n$ matrices and $W,D$ are $1\times 1$ matrices. Replacing $X$ and $W$ with $det(X)\cdot X$ and $det(X)$ respectively, we see that
$$\textcolor{black}{ad_{SO(2n+1)}|_{O(2n)}=}j^*ad_{SO(2n+1)}=ad_{O(2n)}\oplus \gamma _{2n}
{ \mbox{ i.e., }
- \gamma _{2n}=ad_{O(2n)}-ad_{SO(2n+1)}|_{O(2n)}.}$$
Applying Corollary  \ref{transfersplit} to the embedding $j:O(2n)\to SO(2n+1)$ proves Theorem \ref{thsplitmainallprestated}(i) for the pair $(O(2n),SO(2n+1))$.

To prove the other cases, we will need the following.

\begin{lmm}\label{swpullback}
Let $j:O(2n)\rightarrow SO(2n+1)$ be as above.  Then we have
$$Bj^*(w_2)=w_2+nw_1^2$$
in mod 2 cohomology.
\end{lmm}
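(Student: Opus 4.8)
The plan is to compute the effect on $H^2(-;\Z/2)$ of the composite classifying map $Bj\colon BO(2n)\to BSO(2n+1)$ directly from the definition $j(X)=(\det X)(X\oplus 1)$, using the fact that $H^2(BSO(2n+1);\Z/2)$ is generated by $w_2$, and that $H^2(BO(2n);\Z/2)$ has basis $\{w_1^2, w_2\}$. So $Bj^*(w_2)$ must be a $\Z/2$-linear combination $aw_1^2+bw_2$, and the task reduces to pinning down the two coefficients $a,b\in\Z/2$.

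First I would identify the virtual (or actual) bundle on $BO(2n)$ classified by $Bj$. Writing $\gamma=\gamma_{2n}$ for the universal bundle and $\ell=\det\gamma$ for its determinant line bundle (so $w_1(\ell)=w_1(\gamma)=w_1$), the homomorphism $j$ sends the structure group element $X$ to $(\det X)\cdot(X\oplus 1)$; the associated bundle is therefore $\ell\otimes(\gamma\oplus\R)=(\ell\otimes\gamma)\oplus\ell$, a $(2n+1)$-dimensional bundle whose classifying map into $BO(2n+1)$ lifts, by construction, through $BSO(2n+1)$ (indeed the $\det X$ twist is arranged precisely so the determinant is trivial). Hence $Bj^*(w_2)=w_2\bigl((\ell\otimes\gamma)\oplus\ell\bigr)$, and the whole computation is a Whitney-sum/tensor-twist Stiefel--Whitney class calculation. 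By the Whitney formula, $w_2\bigl((\ell\otimes\gamma)\oplus\ell\bigr)=w_2(\ell\otimes\gamma)+w_1(\ell\otimes\gamma)w_1(\ell)+w_2(\ell)=w_2(\ell\otimes\gamma)+w_1(\ell\otimes\gamma)w_1$, since $w_2(\ell)=0$ for a line bundle.

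Next I would apply the standard formula for Stiefel--Whitney classes of a tensor product with a line bundle: for a rank-$2n$ bundle $\gamma$ and a line bundle $\ell$ with $x=w_1(\ell)$, one has $w_1(\ell\otimes\gamma)=w_1+2n\,x=w_1$ (mod $2$) and $w_2(\ell\otimes\gamma)=w_2+(2n-1)w_1x+\binom{2n}{2}x^2=w_2+w_1x+\binom{2n}{2}x^2$ mod $2$; here $x=w_1$, so $w_1(\ell\otimes\gamma)=w_1$ and $w_2(\ell\otimes\gamma)=w_2+w_1^2+\binom{2n}{2}w_1^2$. Substituting, $Bj^*(w_2)=\bigl(w_2+w_1^2+\binom{2n}{2}w_1^2\bigr)+w_1\cdot w_1=w_2+\binom{2n}{2}w_1^2$. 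Finally $\binom{2n}{2}=n(2n-1)\equiv n\pmod 2$, giving $Bj^*(w_2)=w_2+nw_1^2$, as claimed. (As a sanity check I would test $n=0$: then $j$ is the identity on $O(0)$ composed with an inclusion, and the formula gives $w_2$, consistent with a trivial $\R$ being thrown in; and the parity dependence on $n$ is exactly what makes the $Pin^\pm$ distinction in Theorem~\ref{thsplitmainallp}(ii),(iii) work, which is reassuring.)

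The main obstacle is getting the tensor-with-line-bundle formula for $w_2$ exactly right, including the binomial coefficients mod $2$; the cleanest way to sidestep any sign/coefficient slip is to verify it via the splitting principle, pulling back along $BO(1)^{2n}\times BO(1)\to BO(2n)\times BO(1)$ (using Theorem~\ref{cohoclassicdetect}) so that $\gamma$ becomes a sum of line bundles $L_i$ with $w_1(L_i)=t_i$ and $\ell$ corresponds to $t_1+\cdots+t_{2n}$; then $\ell\otimes\gamma\oplus\ell$ splits as line bundles with first Stiefel--Whitney classes $t_i+\sum_j t_j$ and $\sum_j t_j$, and $Bj^*(w_2)$ is read off as the degree-$2$ elementary symmetric-type expression in these, which one reduces to $\sigma_2(t)+ n\,\sigma_1(t)^2$ by an elementary symmetric-function manipulation mod $2$, matching $w_2+nw_1^2$. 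This is bookkeeping rather than a genuine difficulty, and either route closes the proof.
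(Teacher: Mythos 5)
Your proof is correct and is essentially the paper's own argument: identifying the bundle classified by $Bj$ as $(\ell\otimes\gamma)\oplus\ell$ with $\ell=\det\gamma$ and then applying the Whitney formula together with the $w(\ell\otimes E)$ expansion is just a repackaging of the symmetric-function computation the paper performs after restricting along $O(1)^{2n}\subset O(2n)$, where $j$ becomes $(a_1,\ldots,a_{2n})\mapsto(aa_1,\ldots,aa_{2n},a)$ with $a=\prod_i a_i$, and your stated ``splitting principle'' fallback is verbatim the paper's proof. One cosmetic slip: since $\ell=\det\gamma$ is not an independent line bundle, the fallback route needs only the restriction $BO(1)^{2n}\to BO(2n)$ of Theorem \ref{cohoclassicdetect}, not the extra $BO(1)$ factor you introduce.
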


\begin{proof}
Consider the following commutative diagram where all unnamed arrows are the obvious inclusions, and $\varphi$ is given by
$\textcolor{black}{\varphi} (a_1,\ldots ,a_{{2n}})=(aa_1,\ldots ,aa_{{2n}}, a)$ with $a=\Pi _{i=1}^{{2n}}a_i$.
$$
\begin{diagram}
 \node{O(1)^{2n}}\arrow{s}\arrow[2]{e,t}{\varphi}\node[2]{O(1)^{2n+1}}\arrow{s}\\
\node{O(2n)}\arrow{e,t}{j}\node{SO(2n+1)}\arrow{e}\node{O(2n+1)}
\end{diagram}
$$
Thus to determine $Bj^*(w_2)$, it suffices to compute $B\varphi ^*(\sigma _2(t_1,\ldots ,t_{2n+1}))$ by Theorems \ref{cohoclassic}
and \ref{cohoclassicdetect}.
Now, let's note that in general, we have
$$\begin{array}{lll}
   \sigma _2(\alpha +x_1,\ldots ,\alpha +x_{2n})&=&\Sigma _{1\leq i<k\leq 2n}(\alpha ^2+(x_i+x_k)\alpha+x_ix_k) \\
& = & n(2n-1)\alpha ^2 +  (2n-1)\alpha \sigma _1(x_1,\ldots x_{2n})+\sigma _2(x_1,\ldots x_{2n})
  \end{array}
$$
Thus we get
$$
\begin{array}{lll}
B\varphi ^*(\sigma _2(t_1,\ldots ,t_{2n+1}))& = & B\varphi ^*(\sigma _2(t_1,\ldots ,t_{2n})+t_{2n+1}\Sigma _{i=1}^{2n}t_i)\\
& =&\sigma _2(B\varphi ^*(t_1),\ldots ,B\varphi ^*(t_{2n}))+B\varphi ^*(t_{2n+1})\Sigma _{i=1}^{2n}(B\varphi ^*(t_i)) \\
& = & \sigma _2 ( t +t_1, \ldots , t+t_{2n})+ t \cdot \Sigma _{i=1}^{2n}(t+t_i)\\
& = & n(2n-1)t^2+ t\cdot (2n-1)t +\sigma _2(t_1,\ldots ,t_{2n})+t^2\\
& = & nt^2 +\sigma _2(t_1,\ldots ,t_{2n}) \\
& = & n\sigma _1(t_1,\ldots ,t_{2n})^2+\sigma _2(t_1,\ldots ,t_{2n})
\end{array}
$$
as required, where 
$t=\Sigma _{i=1}^{2n}t_i =\sigma _1(t_1,\ldots ,t_{2n})$.
\end{proof}

\opt{sc}{Now Corollary \ref{pullbackext} implies}
\opt{short}{Noting that $Pin^+()$ groups are classified by $w_2$ and $Pin^{-}()$ groups by $w_2+w_1^2$, we see}
 that $j$ induces a map of double covers $Pin^{\pm}(2n)\rightarrow Spin (2n+1)$, where the sign $\pm$ is $+$ if $n$ is even, $-$ if $n$ is odd. Thus with the choice of appropriate sign, we get the following commutative square
$$\begin{diagram}
   \node{Pin^{\pm}(2n)}\arrow{e,t}{\tilde{j}}\arrow{s} \node{Spin(2n+1)}\arrow{s}\\
\node{O(2n)}\arrow{e,t}{j}\node{SO(2n+1)}
  \end{diagram}
$$
where the vertical arrows are the canonical projection{s}.  Thus we get a diffeomorphism
$$Spin(2n+1)/\tilde{j}(Pin^{\pm}(2n))\cong SO(2n+1)/j(O(2n))\cong \R P^{2n+1}{.}$$
 On the other hand, by definition the canonical representations of $Pin^{\pm}(2n)$ and $Spin(2n+1)$ are the pull-back of the canonical representations of $O(2n)$ and $SO(2n+1)$ by the canonical projection. Furthermore, the adjoint representations of $Pin^{\pm}(2n)$ and $Spin(2n+1)$ are the pull-back of the adjoint representations of $O(2n)$ and $SO(2n+1)$ by the canonical projection, \textcolor{black}{s}ince the kernel of the canonical projection is the center.  Thus we can apply Proposition \ref{transfersplit} to prove Theorem \ref{thsplitmainallprestated}(i) for the other two pairs.

Next, we prove Theorem \ref{thsplitmainallprestated}(ii).

\begin{proof}[of Theorem \ref{thsplitmainallprestated}(ii)]
Th{r}ough the usual embeddings $O(2n)\subset O(2n+1)$ and $SO(2n)\subset SO(2n+1)$ we have diffeomorphisms $$O(2n+1)/O(2n)\cong SO(2n+1)/SO(2n)\cong S^{2n}.$$ Moreover, by passing to the $\Z/2$-central extension, we see that $$Spin(2n+1)/Spin(2n)\cong S^{2n}.$$ Since $\chi(S^{2n})=2$ and {$p$ is odd,  by Corollary}  \ref{transfersplit} the transfer map $BG_+\to BK^{(\ad_K-\ad_G|_K)}$ is split.
The cofibre of transfer maps associated to these embeddings is identified in Lemma \ref{cofibreoftransfer}. The result then follows by the discussion of Section \ref{cofibresec}. The identification of $MTO(n)$ at odd primes is
{postponed} to the end of the section  (Lemma \ref{MTO(n)-p-odd}). This completes the proof.
\end{proof}
%
For the unitary and special unitary groups, we need somewhat odd looking condition on $p$, and we have
\begin{thm}\label{thsplitmainusu}
Let $K=U(n)$, $G=SU(n+1)$.  Suppose that $p$ doesn't divide $n+1$.  Then $BG_+$ splits off stably off $MTK$
\opt{local}{, after $p$-localisation}.
\end{thm}
We begin with a lemma.
\begin{lmm}\label{gtensordet}
Let $p\nmid n+1$.  Then the homomorphism $\varphi: A\mapsto {\det (A)A}$ induces a self homotopy equivalence  of $BU(n)$, as well as a homotopy equivalence $$BU(n)^{-\det \otimes \gamma _n}\simeq BU(n)^{-\gamma _n}=MTU(n).$$
\end{lmm}
\begin{proof}
It suffices to show that it induces an automorphism on $H^*(BU(n);\Z/p)$, as $BU(n)$ is of finite type.  Consider the following commutative diagram
$$
\begin{diagram}
 \node{U(1)^n} \arrow{s} \arrow{e,t}{\overline{\varphi}}   \node{U(1)^n} \arrow{s} \\
\node{U(n)}\arrow{e,t}{\varphi} \node{U(n)}
\end{diagram}
$$
where the vertical arrows are the inclusions of the diagonal matrices with entries in $U(1)$, $\overline{\varphi}$
is given by $$\overline{\varphi}(e^{i\theta _1},\cdots ,e^{i\theta _n})=(e^{i(\theta _1+\theta)},\cdots
e^{i(\theta _n+\theta)})\mbox{ where }\theta =\theta _1+\cdots +\theta _n.$$
Now we see that $H^*(B\overline{\varphi})$ on $H^*(BU(1)^n;\Z/p)\cong \Z _{(p)}[x_1,\cdots ,x_n]$ is given by
$$H^*(B\overline{\varphi})(x_i)=x_i+c_1\mbox{ with }c_1=x_1+\cdots +x_n$$.  Thus by restricting to $H^*(BU(n);\Z _{(p)})\cong \Z _{(p)}[c_1,\cdots ,c_n]$, we see that
$$H^*(B\varphi)(c_1)=(1+n)c_1,H^*(B\varphi)(c_i)\equiv c_i \bmod (c_1)\mbox{ for $i>1$.}$$
Thus $H^*(B\varphi)$ is an automorphism if and only if $p$ doesn't divide $n+1$. Now, we note that the pull-back by $\varphi $ of the canonical representation $\gamma _n$ is just $\det \otimes \gamma _n$, so using the same notation for the bundle and representation,  we get a bundle map
$\det \otimes \gamma _n\lra \gamma _n$ over the map $\varphi$, and thus $-\det \otimes \gamma _n\lra -\gamma _n$ as well.  Since $\varphi$ is a homotopy equivalence, we see that the map between the Thom spectra $BU(n)^{-\det \otimes \gamma _n}\lra BU(n)^{-\gamma _n}=MTU(n)$
is also a homotopy equivalence.
\end{proof}

\begin{proof}[of Theorem \ref{thsplitmainusu}]
Consider the embedding $$U(n)\ni X\mapsto X\oplus (\det X)^{-1}\in SU(n+1).$$
The fibre of the map of classifying spaces $BU(n)\lra BSU(n+1)$  is given by the diffeomorphism $SU(n+1)/U(n)\cong \C P^n$. As in the above,
$$
   \left(
\begin{array}{rr}
X & \\
 & W
         \end{array}
\right)
\left(
\begin{array}{rr}
A & B \\
B^{\ast } & D
         \end{array}
\right)
 \left(
\begin{array}{rr}
X ^{-1}& \\
 & W^{-1}
         \end{array}
\right)=
\left(
\begin{array}{rr}
XAX^{-1}
 & W^{-1}XB \\
(W^{-1}XB)^{\ast } & D
         \end{array}
\right) ,$$
where $X, A$ are $m\times m$ matrices and $W,D$ are $1\times 1$ matrices. By setting $W=det(X)^{-1}$ we see that the representation $\ad_G|_K-\ad_K$ is isomorphic to the tensor product (over $\C$) of the canonical representation with the determinant representation. The proof is complete by Lemma \ref{gtensordet}.
\end{proof}
{
We conclude the section by identifying the $MTO(n)$ spectra at odd primes.}
The following
{generalises the known cases of $MTO(1)$ and $MTO(2)$, c.\ f.\ \cite[subsection 5.1]{Ra}.}
\begin{lmm}\label{MTO(n)-p-odd}Suppose that $p$ is odd.
\opt{local}{Localised at $p$,} {F}or all $n\geqslant 0$, there are homotopy equivalences
$$MTO(2n)\simeq BO(2n)_+\simeq BSO(2n+1)_+\simeq BSp(n)_+,\ MTO(2n+1)\simeq *.$$
\end{lmm}
\begin{proof}
The proof is by strong induction{.}
Note that the induction starts since $MTO(0)\simeq S^0\simeq BO(0) _+$. Suppose now we have $MTO(2n)\simeq BO(2n)_+$ and consider the commutative 
{square}
{$$
\begin{diagram}
       \node{BO(2n+1)_+} \arrow{s,r}{{=}}\arrow{e} \node{MTO(2n)}\arrow{s,r}{\omega} \\
                  \node{BO(2n+1)_+} \arrow{e,t}{t_{B\iota}}               \node{BO(2n)_+}{.}
\end{diagram}
$$
}
{corresponding to the
factorisation of the Becker-Gottlieb transfer through the BSMMM transfer Proposition \ref{transfersplit}.
By the induction hypothesis, the right vertical arrow is a homotopy equivalence.}
Since the inclusion $\iota : O(2n)\subset
O(2n+1)$ induces $p$-local equivalence $B\iota: BO(2n)\simeq BO(2n+1)$ by {\cite[Theorem 1.6]{Thomas}}, and the composition
$$BO(2n+1)_+\stackrel{t_{B\iota}}{\to} BO(2n)_+ \stackrel{{B\iota}}{\to} BO(2n+1)_+$$
{induces} the multiplication by $\chi (S^{2n})=2$ in homology (Theorem \ref{transcomp}), we see that the
{bottom} horizontal
arrow is a homotopy equivalence.
{Thus the top horizontal row is also a homotopy equivalence.  However, by Lemma \ref{cofibreoftransfer}, $MTO(2n+1)$ is its fibre,
thus contractible.}

Next, the cofibration (Lemma \ref{cofibreoftransfer})$$MTO(2(n+1))\to BO(2(n+1))_+
\stackrel{\omega}{\to} MTO(2n+1)$$ together with $MTO(2n+1)\simeq *$ will show that $\omega: MTO(2(n+1))\to BO(2(n+1))_+$ is an equivalence\opt{local}{ at $p$.}{.} This finishes the induction.

The standard maps $BO(2n+1)\to BSO(2n+1)$ and $BSp(n)\to BO(2n)$ are well-known to be
homotopy equivalences. This complete the proof.
\end{proof}
\section{Cohomology of infinite loop spaces associated to the Madsen-Tillmann spectra}
{
The splitting of Madsen-Tillmann spectra discussed
previously (Theorems \ref{splitbyMTW}, \ref{thsplitmainallprestated} and \ref{thsplitmainusu}) implies the splitting of associated
infinite loop space\opt{sc}{ (Lemma \ref{splittingloopspaeofwedge})}.  Thus we can derive some information on their (co)homology, including
information on various characteristic classes that live in their cohomology rings.  In this section we see some examples.}
\subsection{Polynomial families in $H^*(\Omega^\infty MT{G};\Z/p)$}\label{sec:pf}
{We start with the following corollary of Theorem \ref{splitbyMTW}, identifying a polynomial family in the cohomology
ring of $\Omega^{\infty }MTK$ for numerous groups $K$.}
\begin{cor}\label{charclass1}
Let $K$ be $O(2n)$, $U(2n)$, $Sp(2n)$, $Pin^{+}(4n)$ or $Pin^{-}(4n+2)$. The composition
$$ MTK\stackrel{\omega}{\lra} BK_+\stackrel{c}{\lra} S^0\stackrel{i}{\lra} KO,$$
where
$i$ is the unit map, induces an injection in mod $2$ cohomology of infinite loop spaces
$$H^*(\Z \times BO;\Z/2)\hookrightarrow  H^*(\Omega^{\infty }MTK;\Z/2).$$
Thus if we define the  class $\xi _i\in H^*(\Omega^{\infty }_0MTK;\Z/2)$ by
$$\xi _i=(\omega \circ c \circ i)^*(w_i), $$ then we have
$$\Z /2[\xi _1,\ldots ,\xi _k,\ldots ]\subset H^*(\Omega ^{\infty }_0MTG;\Z/2).$$
\end{cor}
\begin{proof}
{  Consider the composition
$$\Omega ^{\infty }MTG \stackrel{\Omega^{\infty}\omega}{\lra} QBG_+\stackrel{Qc}{\lra} QS^0 \lra \Z\times BO.$$
We first show that this composition {induces} an injection in
 cohomology. By Lemma \ref{QS-BO} the map $H_*(QS^0;\Z/2)\to H_*(\Z\times BO;\Z/2)$ is surjective in homology.
By the hypothesis and Corollary \ref{splitbyMTW} the map $\Omega ^{\infty }MTG \lra QBG_+\stackrel{Qc}{\lra} QS^0$ splits, so it is also surjective in homology.  Thus by composing and dualising, we see that $H^*(BO;\Z/2)\cong \Z/2[w_1,\ldots , w_k ,\ldots]$ injects to $H^*(MTG;\Z/2)$.  Noting that the image of $w_k$ is $\xi _k$, we get the desired result.}
\end{proof}

In the special case ${G}=O(2)$, the family discussed above agrees with the one defined in \cite[Section 6]{Ra} up to conjugation, and generates the same subalgebra. 
We now discuss its complex analogue. That is:
\begin{cor}\label{charclass2}
Let $K$ and $p$ be as in \textcolor{black}{Theorem \ref{splitbyMTW}}.
\opt{local}{localied at $p$, t} The composition
$$MTK\stackrel{\omega}{\lra} BK_+\stackrel{c}{\lra} S^0 \stackrel{i}{\lra} KU $$
factors through the Adams summand $E(1)$ and induces an injection in mod $p$ cohomology of infinite loop spaces
$$H^*(\Omega ^{\infty }E(1);\Z/p)\cong \Z/p[c_{p-1},c_{2(p-1)},\ldots ]\hookrightarrow H^*(\Omega ^{\infty} MTK;\Z/p).$$
Thus if we define the  class $\xi _i\in H^*(\Omega^{\infty }_0MTK;\Z/p)$ by
$$\xi _i^{\C}=(\omega \circ c \circ i)^*(c_{i{(p-1)}}), $$
then we have
$$\Z /2[\xi _1^{\C},\ldots ,\xi _k^{\C},\ldots ]\subset H^*(\Omega ^{\infty }_0MTK;\Z/{p}).$$
\end{cor}

\begin{proof}[of Theorem \ref{charclass2}] Note that the unit map of $KU$
\opt{local}{localised at $p$} factors through {that of $E(1)$},
for degree reasons, {as} $\pi _0(\Sigma ^{2i}E(1))=0$ if $0\leq i\leq p-2$.  Thus the result follows by Lemma \ref{cohe1}, Proposition
\ref{splitkono} and Corollary \ref{splitbyMTW} as in the proof of Theorem \ref{charclass1}.
\end{proof}

\subsection{Recollections on homology suspension}
Let $E$ be a spectrum in the sense of \cite{A}, that is, a sequence of pointed spaces $E_j,$ with maps $\Sigma E_j\to E_{j+1}$.
Its homology with coefficients in $k$ is defined to be  $H_*(E;k)=\colim _jH_{*+j}(E_{j};k)$. Note that inside the colimit,
the homology of the basepoint suspending trivially, one can use interchangeably unreduced or reduced homology,
although it is customary to use the reduced homology.  When $E=\Sigma ^{\infty}$
the suspension spectrum of a (pointed) space $X$, we get the isomorphism $H_*(\Sigma ^{\infty }X;k)\cong
\widetilde{H}_*(X;k)$, the unreduced homology of the space $X$. The elementary decomposition of the
direct sum into unreduced homology and the coefficient ring, from our point of view, reflects the splitting of spectra
$\Sigma  ^{\infty }X_+\simeq \Sigma  ^{\infty }X\vee S^0$, and we have
$$H_*(\Sigma  ^{\infty }X_+;k)\cong \widetilde{H}_*(X_+;k) \cong {H}_*(X;k) \cong \widetilde{H}_*(X)\oplus H_*(pt).$$.

The stable homology suspension homomorphism $\sigma_*^\infty:H_*(\Omega^\infty E;k)\to H_*(E;k)$ is
the standard map to the colimit above obtained by replacing $E$ by an equivalent $\Omega$-spectrum.  It can also be defined by
 the map induced by the evaluation map $\Sigma^\infty\Omega^\infty E\to E$ with the latter being adjoint to the identity map $\Omega^\infty E\to \Omega^\infty E$.
{When $X$ is a suspension spectrum, the generalities of adjoint functors ( \cite[Chapter IV, Theorem 1(8)]{Maclanebook}) imply the following:
\begin{lmm}\label{suspensiononto}
For a pointed topological space $X$, the composition
$\Sigma ^{\infty }X\rightarrow \Sigma ^{\infty }QX\rightarrow \Sigma ^{\infty }X$ is the identity, i.e., $\Sigma ^{\infty }X$ splits off $\Sigma ^{\infty }QX$. Thus stable homology suspension
$$\sigma_*^\infty:H_*(QX;k)\to H_*(\Sigma^\infty X;k)\cong \widetilde{H}_*(X;k)$$
is an epimorphism.
\end{lmm}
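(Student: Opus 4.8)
The plan is to deduce the whole statement from the triangle (zig-zag) identities of the adjunction $\Sigma^\infty\dashv\Omega^\infty$ that is already invoked in the statement, so the argument is purely formal. First I would pin down the two maps in the composition. The map $X\to QX$ occurring implicitly is the unit $\eta_X$ of the adjunction, i.e.\ the map adjoint to $\mathrm{id}_{\Sigma^\infty X}$; applying $\Sigma^\infty$ produces the first arrow $\Sigma^\infty X\to\Sigma^\infty QX=\Sigma^\infty\Omega^\infty\Sigma^\infty X$. The second arrow $\Sigma^\infty QX\to\Sigma^\infty X$ is the evaluation map $\Sigma^\infty\Omega^\infty E\to E$ specialised to $E=\Sigma^\infty X$, which is exactly the counit $\varepsilon_{\Sigma^\infty X}$ of the same adjunction.

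Next, the first triangle identity reads $\varepsilon_{\Sigma^\infty X}\circ\Sigma^\infty\eta_X=\mathrm{id}_{\Sigma^\infty X}$; this is precisely the cited item \cite[Chapter IV, Theorem 1 (8)]{Maclanebook}. Hence the composition in the statement is the identity of $\Sigma^\infty X$, and $\Sigma^\infty\eta_X$ exhibits $\Sigma^\infty X$ as a retract, hence a wedge summand, of $\Sigma^\infty QX$, which is the first assertion.

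For the homological consequence I would apply $H_*(-;k)$. By the definition recalled just above, $\sigma_*^\infty\colon H_*(QX;k)=H_*(\Omega^\infty\Sigma^\infty X;k)\to H_*(\Sigma^\infty X;k)\cong\widetilde{H}_*(X;k)$ is the homomorphism induced by the evaluation map $\varepsilon_{\Sigma^\infty X}$. Since $\varepsilon_{\Sigma^\infty X}$ has a section $\Sigma^\infty\eta_X$ already on the level of spectra, the induced map $(\varepsilon_{\Sigma^\infty X})_*$ has the section $(\Sigma^\infty\eta_X)_*$ on homology; in particular it is surjective, giving the claimed epimorphism. (The same observation of course works for any generalised homology theory.)

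The argument has no real obstacle: the only point needing care is the bookkeeping of identifying the ``adjoint of the identity'' used to construct $QX=\Omega^\infty\Sigma^\infty X$ and $\sigma_*^\infty$ with the literal unit/counit pair of the adjunction, so that the abstract triangle identity applies verbatim rather than merely up to homotopy.
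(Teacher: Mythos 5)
Your proof is correct and takes essentially the same approach as the paper: the paper's one-line justification is exactly the triangle identity for the adjunction $\Sigma^\infty\dashv\Omega^\infty$, citing \cite[Chapter IV, Theorem 1 (8)]{Maclanebook}, and you have simply spelled out that the two maps are $\Sigma^\infty\eta_X$ and the counit $\varepsilon_{\Sigma^\infty X}$ before applying the identity and passing to homology.
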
}

However, this is not sufficient for our purpose, since we often have to deal with the map from $H_*(Q_0X;k)$ which is slightly smaller
if $X$ is not connected because of the decomposition
$$QX\simeq Q_0(X)\times \pi _0(QX), \pi _0(QX)\cong \lim \pi _0(\Omega ^n\Sigma ^nX)\cong \pi _0^S(X).$$
Fortunately, the spaces $X$ we deal with have the form $Y_+$ with $Y$ connected.  Thus we have the decomposition
$$QX= QY_+\simeq QY\times QS^0=Q_0Y\times QS^0
.$$ {Noting that $H_*(QS^0)$ suspends to $\widetilde{H}^*(S^0)\cong H_*(pt)$, we deduce} the following:

\begin{lmm}\label{suspensiononto0}
For a connected topological space $Y$, the composition
$$\sigma_*^\infty:H_*(Q_0Y_+;k)\to \widetilde{H}_*(Y_+;k)\to \widetilde{H}_*(Y;k)$$
is onto.
\end{lmm}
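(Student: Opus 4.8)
The plan is to reduce this to Lemma \ref{suspensiononto} by means of the stable wedge splitting $\Sigma^\infty Y_+\simeq\Sigma^\infty Y\vee S^0$, using connectivity of $QY$ only to keep all classes inside the correct path-component. Concretely, let $\iota\colon\Sigma^\infty Y\to\Sigma^\infty Y\vee S^0=\Sigma^\infty Y_+$ be the inclusion of the first wedge summand and $r\colon\Sigma^\infty Y_+\to\Sigma^\infty Y$ the corresponding retraction, so $r\circ\iota=\mathrm{id}$. On homology, $\iota_*\colon\widetilde H_*(Y;k)\to\widetilde H_*(Y_+;k)=H_*(Y;k)$ is the canonical inclusion and $r_*\colon\widetilde H_*(Y_+;k)\to\widetilde H_*(Y;k)$ is exactly the projection appearing in the statement (it kills the degree-zero class coming from the disjoint basepoint), so $r_*\iota_*=\mathrm{id}$. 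Since $Y$ is connected, $QY$ is connected ($\pi_0QY=\pi_0\Sigma^\infty Y=\widetilde H_0(Y;\Z)=0$), so the pointed map $\Omega^\infty\iota\colon QY\to QY_+$ has connected image through the basepoint and therefore corestricts to a map $s\colon QY\to Q_0(Y_+)$; moreover $(\Omega^\infty r)\circ s=\Omega^\infty(r\iota)=\mathrm{id}_{QY}$, so $s_*\colon H_*(QY;k)\to H_*(Q_0(Y_+);k)$ is (split) injective.

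Next I would invoke naturality of the stable homology suspension with respect to the map of spectra $\iota$: this yields a commuting square whose top arrow is $(\Omega^\infty\iota)_*$, which factors through $H_*(Q_0(Y_+);k)$ via $s_*$, whose vertical arrows are the two copies of $\sigma_*^\infty$, and whose bottom arrow is $\iota_*$. Post-composing the bottom row with $r_*$ and using $r_*\iota_*=\mathrm{id}$ gives $r_*\circ\sigma_*^\infty\circ s_*=\sigma_*^\infty\colon H_*(QY;k)\to\widetilde H_*(Y;k)$. Now, given $\alpha\in\widetilde H_*(Y;k)$, Lemma \ref{suspensiononto} applied to the pointed space $Y$ produces $\beta\in H_*(QY;k)$ with $\sigma_*^\infty(\beta)=\alpha$; then $s_*(\beta)\in H_*(Q_0(Y_+);k)$ and the composition in the statement sends it to $r_*\sigma_*^\infty s_*(\beta)=\alpha$. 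Hence that composition is onto.

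The only point requiring genuine care — and the one I would double-check — is the component bookkeeping: one must verify that $\Omega^\infty\iota$ really lands in the \emph{base} component $Q_0(Y_+)$ (this is where connectivity of $QY$, hence of $Y$, enters) and that the map $r_*$ coming from the splitting is literally the projection $\widetilde H_*(Y_+;k)\to\widetilde H_*(Y;k)$ of the statement rather than some other splitting of $H_*(Y;k)=\widetilde H_*(Y;k)\oplus k$. Everything else is formal once naturality of $\sigma_*^\infty$ for maps of spectra is at hand. Equivalently, one may argue dually with the projection $QY_+\simeq QY\times QS^0\to QY$ restricted to $Q_0(Y_+)=QY\times Q_0S^0$, which is split surjective on homology, and then compose with the surjection $\sigma_*^\infty\colon H_*(QY;k)\twoheadrightarrow\widetilde H_*(Y;k)$ of Lemma \ref{suspensiononto}; the content is identical.
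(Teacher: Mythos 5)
Your proof is correct and follows essentially the same route the paper takes: both exploit the stable splitting $\Sigma^\infty Y_+\simeq\Sigma^\infty Y\vee S^0$ (equivalently $QY_+\simeq QY\times QS^0$), observe that connectivity of $Y$ forces $QY=Q_0Y$ so that the relevant maps stay inside the component $Q_0(Y_+)$, and reduce to the surjectivity statement of Lemma~\ref{suspensiononto} for $\Sigma^\infty Y$. The only cosmetic difference is that you run the argument chiefly through the inclusion/section $\Sigma^\infty Y\hookrightarrow\Sigma^\infty Y_+$ and naturality of $\sigma_*^\infty$, whereas the paper's implicit proof points directly at the product decomposition $Q_0(Y_+)\simeq QY\times Q_0S^0$ and projects; as you note yourself in the final sentence, these are the same argument.
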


It is known that homology suspension kills decomposable elements; this for examples follows from \cite[Corollary 3.4]{Whitehead-elements} applied to the path-loop fibration. An immediate corollary of this observation is the following well-known fact about the homology suspension

\begin{lmm}\label{suspensionkilldec}
Let $k$ be a field. The homology suspension $\sigma_*:H_*(\Omega X;k)\to H_{*+1}(X;k)$ factors through the module of indecomposables (with respect
to the Pontrjagin product) $QH_*(\Omega X;k)$.  In particular, $\sigma_*^\infty:H_*(QX;k)\to H_*(\Sigma^\infty X;k)$ factors through
$QH_*(QX;k)$. Dually, $\sigma ^{\infty *}: \widetilde{H}^*(X;k)\to  H^*(QX;k)$ factors through the set of
primitives $PH^*(QX;k)$.
\end{lmm}
%
Finally, we note that if $f:E\to F$ is a map of spectra then there is a commutative diagram as
$$
\begin{diagram}
\node{H_*(\Omega^\infty E;k)} \arrow{e,t}{\Omega^\infty f}\arrow{s,t}{\sigma_*^\infty} \node{H_*(\Omega^\infty F;k)}
 \arrow{s,t}{\sigma_*^\infty} \\
\node{H_*(E;k)} \arrow{e,t}{f}                                                         \node{H_*(F;k)}
\end{diagram}
$$
that is $\sigma_*^\infty(\Omega^\infty f)_*=f_*\sigma_*^\infty$.
\subsection{The universally defined characteristic classes {in modulo $p$ cohomology}}\label{sec:ucc}
we will now discuss how our splitting results can be used to analyze the universally defined characteristic classes (\ref{def-univchar})
in modulo $p$ cohomology. We will restrict ourselves to the case of $H^*(\Omega ^{\infty}_0 MTO({m});\Z/2)$ for the sake of concreteness.

{The} composition $$H^*(BSO({m}+1) ;\Z/2)
\stackrel{\sigma ^{\infty *}}{\lra }H^*(Q_0BSO({m}+1)_+ ;\Z/2) \lra H^*(\Omega ^{\infty }_0MTO({m}) ;\Z/2)$$ is injective
{because the first map is injective by dualising Lemma \ref{suspensiononto0}, and the second
is so by Corollary \ref{infsplit}.} Of course, it is not a ring map
as $\sigma ^{\infty *}$ is not, {but} its natural right inverse is.  Thus the universally defined characteristic classes that are images of the standard polynomial generators of $H^*(BSO({m}+1) ;\Z/2)$ are algebraically independent.
{Unfortunately} the standard polynomial generators of $H^*(BSO({m}+1);\Z/2)$ do not map to polynomial generators of $H^*(BO({m}) ;\Z/2)$, which makes things a little bit complicated.

For example, let's take the case of $\Omega ^{\infty}_0MTO(2)$, $p=2$.  Then we have
$$H^*(BSO(3) ;\Z/2)\cong \Z/2[w_2,w_3]\mbox{, } H^*(BO(2) ;\Z/2)\cong \Z/2[w_1,w_2],$$ and the map $BO(2)\lra BSO(3)$ induces a map $w_2\mapsto w_2+w_1^2$ by Lemma \ref{swpullback}, and by similar arguments we get  $w_3\mapsto w_1w_2$. One can derive from this the classes  {$\mu _{0,1}+ \mu _{1,0}^2
=\overline{\nu}_{w_2+w_1^2}$ and $\mu _{1,1}=\overline{\nu}_{w_1w_2}$} as defined in \cite{Ra} are algebraically independent.  {A more detailed analysis of the homology suspension map leads to the following:}
\begin{thm} Let $m=2n$ and  \label{univ}
$\nu _I$ be the image of $w^I\in H^*(BSO(m+1);\Z/2)$ in
$H^*(\Omega ^{\infty}_0MTO(m);\Z /2)$ 
under the composition
$$H^*(BSO(m+1);\Z/2)\to H^*(BO(m);\Z/2)\to H^*(Q_0(BO(m));\Z/2)\to H^*(\Omega ^{\infty}_0MTO(m);\Z/2).$$
In other words,
$\nu _I=\overline{\nu }_{Bj^*w^I}$
where $j:O(2n)\rightarrow
SO(2n+1)$
{was}
be defined in Section \ref{transfersplitsection}.
Then the only relations among these classes are the ones generated by
$$\nu _{I}^2=\nu _{2I}.$$
Thus the classes $\nu _{I}$, $I=(i_2,\ldots ,i_{m+1})$ with at least one $i_k$ odd are algebraically independent.
\end{thm}
\begin{proof} Consider the following diagram, which  commutes by the naturality of the homology suspension.
$$
\begin{diagram}
\node{H^*(BSO(2m+1);\Z/2)}\arrow{s,r}{\sigma ^{\infty *}}\arrow{e,t}{Bi^*} \node{H^*(BO(2m);\Z/2)}\arrow{e,t}{\omega ^*}\node{H^*(MTO(2m);\Z/2)}\arrow{s,r}{\sigma ^{\infty *}}\\
\node{H^*(Q_0BSO(2m+1)+;\Z/2)}\arrow[2]{e} \node{}\node{H^*(\Omega ^{\infty}_0MTO(2m);\Z/2)}
\end{diagram}
$$
The bottom horizontal map is injective by Corollary \ref{infsplit}, thus
it suffices to show the corresponding results, with
$\nu _{I}$ replaced with $\sigma ^{\infty *}(w^I)$  in $H^*(Q_0BSO({m}+1_+);\Z/2)$.

Now, let $X$ be any space.
Consider the following diagram.
$$
\begin{diagram}
 \node{H^d(X;\Z/2)}\arrow{e,t}{\sigma ^{\infty *}}\arrow{s,r}{Sq^d= (-)^2} \node{H^d(Q
_0X;\Z/2)}\arrow{e}\arrow{s,r}{Sq^d= (-)^2}
 \node{H^d(X;\Z/2)}\arrow{s,r}{Sq^d= (-)^2} \\
\node{H^{2d}(X;\Z/2)}\arrow{e,t}{\sigma ^{\infty *}} \node{H^{2d}(Q_0X;\Z/2)}\arrow{e} \node{H^{2d}(X;\Z/2)}
\end{diagram}
$$
The maps on the left are induced by map of space, those on the right are induced by those of spectra,
so they commute with the Steenrod squares.
 Moreover, by Lemmata \ref{suspensiononto} and \ref{suspensiononto0} the 
horizontal compositions are the identities.  Therefore, an element of ${{H}}^*(X;\Z/2)$ is a square if and only if its image in $H^*(Q_0X;\Z/2)$ is a square. Furthermore, by Lemma \ref{suspensionkilldec} the map $H^*(X;\Z/2)\to H^*(Q_0X;\Z/2)$ factors through $PH^*(Q_0X;\Z/2)$, and by \cite[Proposition 4.21]{MM}, the only elements in the kernel of the map
$PH^*(Q_0X;\Z/2)\to QH^*(Q_0X;\Z/2)$ are squares.  Thus the kernel of the map $Sym(H^*(X;\Z/2))\to H^*(Q_0X;\Z/2)$ extending the map $H^*(X;\Z/2)\to H^*(Q_0X;\Z/2)$  is the ideal generated by the elements $[x^2]-[x]^2$ where $x\in H^*(X;\Z/2)$, $[x]$ is the corresponding element in $Sym(H^*(X;\Z/2))$.
The proof is complete, once we observe that $H^*(Q_0BSO({m}+1);\Z/2)$ is polynomial. {
However,  $H^*(BSO({m}+1);\Z/2)$ is polynomial, so by  \cite[Theorem 3.11]{W} (see also \cite[Lemma 7.2]{G}) $H^*(Q_0BSO({m}+1);\Z/)2$ is also a polynomial algebra.} 
\end{proof}
\begin{rem}
{\begin{enumerate}
  \item  
We denote $\mu _I=\overline{\nu }_{w^I}$
for $H^*(BO(n);\Z /2)\cong \Z/2[w _1,\ldots ,w _n]$.
This generalises the classes $\mu _{i,j}$'s defined in  \cite[Example 2.6]{Ra}.
It is easy to see that once we express $\nu $'s in terms of $\mu $'s, the relations
$\nu _I^2=\nu _{2I}$
follow from the ones
$\mu _J^2=\mu _{2J}$
and the latter relations were essentially found in \cite{Ra}.
\item The arguments above also apply to other pairs $G,K$ and at any prime
satisfying the hypothesis of Corollary \ref{infsplit}, as long as $H^*(BK;\Z /p)$ is polynomial.
The proof is completely similar{,} 
and at odd primes, we {remark} that we only have to work with the subalgebras
of $H^*(Q_0BK_+;\Z/p)$ generated by the elements dual to that of $H^*(BK;\Z/p)$, which is polynomial by \cite[Theorem 3.11]{W}.
 \end{enumerate}
}
\end{rem}

\begin{exm}
Let's consider the case $n=1$.  As the map induced by $BO(2)\rightarrow BSO(3)$ in cohomology is a ring homomorphism, we have $(w_2)^i\mapsto (w_2+w_1^2)^i$, $w_3^j\mapsto w_1^jw_2^j$.  Thus in low degrees, we have following algebraically independent elements in degrees less than or equal to 9.  We show the detail of computation for first few elements.

\begin{minipage}{\textwidth}
\begin{displaymath}
\begin{array}{|r|l|l|}\hline
degree & \mbox{elements in terms of $\nu$ }& \mbox{elements in terms of $\mu$}\\ \hline
2 & \nu _{1,0} =\overline{\nu}_{w_2}& \mu _{0,1}+\mu _{1,0}^2 =\mu _{0,1}+\mu _{2,0}=\overline{\nu}_{w_2+w_1^2}
\footnotemark \\ \hline
3 &  \nu _{0,1}=\overline{\nu}_{w_3}&\mu _{1,1} =\overline{\nu}_{w_1w_2} \\ \hline
4 & N.A. & N.A. \\ \hline
5 & \nu _{1,1}=\overline{\nu}_{w_2w_3} &\mu _{1,2}+\mu _{3,1}=\overline{\nu}_{w_1w_2^2+w_1^3w_2}  \\ \hline
6 & \nu _{3,0} &\mu _{0,3}+\mu _{1,1}^2+\mu _{4,1}+\mu _{3,0}^2  \\ \hline
7 & \nu _{1,2}& \mu _{2,3}+\mu _{2,1}^2 \\ \hline
8 & \nu _{2,1} & \mu _{2,3}+\mu _{2,1}^2 \\ \hline
9 &\nu _{3,1},\ \nu _{0,3}  &\mu _{1,4}+\mu _{3,3}+\mu _{5,2} +\mu _{7,1},\  \mu _{3,3}\  (resp.)  \\ \hline
  \end{array}
  \end{displaymath}
\end{minipage}

\end{exm}

\subsection{Cohomology with integer coefficients}
In this section, we discuss the implication of our splitting theorems to the cohomology of
{the infinite loop spaces associated to} $MTK$ spectra with $p$-local integer coefficients.
Let $(K,G)$ be a pair satisfying hypotheses of Theorems \ref{thsplitmainallprestated} or \ref{thsplitmainusu}, and choose $p$ accordingly.  Then $p$-locally $BG_+$ splits off $MTK$, therefore $H^*(QBG_+;\Z_{(p)})$ is a \textcolor{black}{tensor factor}
of $H^*(\Omega ^{\infty }MTK;\Z_{(p)})$. Since $H^*(QBG_+;\Z_{(p)})$ can be described completely in terms of
$H^*(BG_+;\Z_{(p)})$, which is completely known in all cases ($H^*(BSpin(n);\Z)$ which we \textcolor{black}{have not discussed in
Appendix \ref{sec:rec}} is known by \cite{KSpin}) we have a complete knowledge of this summand. Unfortunately because of the presence of torsion, we don't have K\"{u}nneth isomorphism,
{so} $H^*(QBG_+;\Z_{(p)})$ as well as  $H^*(\Omega ^{\infty }MTK;\Z_{(p)})$ only have the structure of algebras, and not coalgebras,
which makes it rather difficult to work with them concretely.  However, we still can get some information on them.  For example, it
follows immediately from \cite[Theorem 4.13]{Maybook} that they contain a summand of order $p^i$ for any $i$.

Without localisation, even less can be said.
{Still we can} say, under the hypotheses of Theorem \ref{thsplitmainallprestated}\textcolor{black}{(i)}, we have
$H^*(BG;\Z)$ that splits off $H^*(\Omega ^{\infty }MTK;\Z)$.  As a matter of fact a similar statement holds for any generalized
cohomology. We show that in the case of ordinary cohomology with integer coefficients, this implies that the non-divisibility of generalised
Wahl classes (Theorem \ref{wahlc}). Let's start with a definition.
\begin{defn}
$\zeta _{I}\in H^*(MTO(2m);\Z)$
 is the universally defined characteristic class associated to the monomial in
the Pontryagin classes  $p^I$, $\overline{\nu} _{p^I}$.
\end{defn}
Thus given a $2m$-dimensional {manifold} bundle $E\rightarrow B$ classified by Madsen-Tillmann-Weiss map $f:B\rightarrow \Omega ^{\infty}_0MTO(2m)$,
one can define 
$\zeta _{I}(E)=f^*(\zeta _{I})\in H^*(B;\Z)$. When $m=1$, by writing $i_1=i$, we recover Wahl's classes $\zeta _i$. Given a surface bundle $E\rightarrow B$, Wahl defines $\zeta _i \in H^{4i}(B;\Z)$ to be the image of $p_1(T_v(E))^i$ by the transfer
$H^*(E;\Z )\to H^*((B;\Z)$ where $T_v(E)\to E$ is the vertical tangent bundle(\cite[p.391]{Wa}). Although our definition differs from hers,  as in \cite[Theorem 2.4]{Ra} one can prove that the both definitions agree
\cite[Example 2.5]{Ra}.
{Furthermore, in this case, if the fibre is orientable, then $\zeta _i(E)$ agrees with $\kappa _{2i}(E)$, where $\kappa _{2i}$ is the well-known $2i$-th
Mumford-Miller-Morita class (c.f. loc.cit.).}
\begin{thm}\label{wahlc}
The classes $\zeta _{I}\in H^*(MTO(2m);\Z)$  are not divisible in $H^*(\Omega ^{\infty}_0MTO(2m);\Z)$.
\end{thm}

\begin{proof}
By the naturality of the cohomology suspension, the following square commutes.
$$
\begin{diagram}
\node{H^*(BSO(2m+1);\Z)}\arrow{s}\arrow{e} \node{H^*(BO(2m);\Z)}\arrow{e}\node{H^*(MTO(2m);\Z)}\arrow{s}\\
\node{H^*(Q_0BSO(2m+1);\Z)}\arrow[2]{e} \node{}\node{H^*(\Omega ^{\infty}_0MTO(2m);\Z)}
\end{diagram}
$$
Note that by Theorem \ref{thsplitmainallprestated}, $BSO(2m+1)_+$ splits off $MTO(2m)$, thus $QBSO(2m+1)_+$ splits off $\Omega ^{\infty}MTO(2m)$. By Proposition \ref{suspensiononto} $\Sigma ^{\infty }BSO(2m+1)_+$ splits off $\Sigma ^{\infty }QBSO(2m+1)_+$. Combining these we see that $\Sigma ^{\infty }BSO(2m+1)_+$ splits off $\Sigma ^{\infty }\Omega ^{\infty}MTO(2m)$. Thus the composition $H^*(BSO(2m+1);\Z)\to H^*(\Omega ^{\infty}MTO(2m);\Z)$ is a split monomorphism of abelian groups.

On the other hand, $H^*(BSO(2m+1);\Z)$ is also a direct summand of $H^*(BO(2m);\Z)$, with the quotient group consisting only of torsion elements.  Thus we have a sequence of maps
$$\Z[p_1,\ldots p_m]\subset H^*(BSO(2m+1);\Z) \to H^*(BO(2m);\Z) \cong \Z[p'_1,\ldots p'_m]\oplus T \to \Z[p_1,\ldots p_m]$$
where the composition is an isomorphism.  {Here we used the notation
$p'_i$ to distinguish the Pontryagin classes in $H^*(BO(2m);\Z )$ from  those in $H^*(BSO(2m+1);\Z )$.}
In other words,  a monomial in $p'$'s is, up to torsion elements, the image of a non-divisible element in $H^*(BSO(2m+1);\Z)$. But by definition the $\zeta$-classes are the images of monomials in $p'$'s, thus up to torsion elements, they are images of
non-divisible element in $H^*(BSO(2m+1);\Z)$.  Since $H^*(BSO(2m+1);\Z)\to H^*(\Omega ^{\infty}MTO(2m);\Z)$ is a split mono, a non-divisible element in the former maps to a non-divisible element in the latter. Now, as in the proof of \ref{suspensiononto}, we can replace $ \Omega ^{\infty}MTO(2m)$ with
$\Omega ^{\infty}_0MTO(2m)$ which completes the proof.
\end{proof}

We remark that in general, there is no reason to expect that a monomial in $p'$'s in $H^*(BO(2m);\Z)$
is actually the image of an element in $H^*(BSO(2m+1);\Z)$.  As a matter of fact, Chern classes in $H^*(BSO(2m+1);\Z)$
map to Chern classes in  $H^*(BO(2m);\Z)$, but $c_{2i}$ can pull back to a polynomial involving $c_{2j+1}$ with $j<i$.

{We also note that in the above, we started with non-divisible elements in $H^*(BG;\Z)$.  In some cases,
it may happen that the characteristic class is already divisible in $H^*(BG;\Z)$, in which case its image in
$H^*(\Omega ^{\infty}MTK;\Z)$ will have the same divisibility.  Thus
if we take the pair $(K,G)$ to be $(Pin^{-}(2), Spin(3))$, we get the first part of \cite[Proposition 5.3]{Rpin}, modulo
the homological stability \cite[Theorem 4.19]{Rpin}.}

\subsection{The failure of the exactness}
{We now proceed to prove the following:}
\begin{prop}\label{nonexact}
Suppose the pair of groups $({\mathbf K}(m),G)$, and the prime $p$ satisfies hypotheses of Theorem \ref{thsplitmainallprestated} or Theorem \ref{thsplitmainusu}, so that $BG_+$ splits off $MT{\mathbf K}(m)$, and that $G$ is non-trivial. Suppose further if $K=O$, then $p=2$.  Then, at the prime $p$, the sequence of Hopf algebras
$$ H_*(\Omega^\infty_0MT{\mathbf K}(m+1);\Z/p)\lra H_*(Q_0B{\mathbf K}(m+1)_+;\Z/p)\lra H_*(\Omega^\infty_0 MT{\mathbf K}(m);\Z/p)$$
induced by the cofibration for Madsen-Tillmann spectra (Lemma \ref{cofibreoftransfer}) is not short exact.
\end{prop}
%
\begin{proof}
Let ${\bf K}$, $m$, $G$ and $p$ be as in hypothesis of Proposition{.}
We show that the sequence of Hopf algebras induced by the cofibre sequence $MT{\mathbf K}(m+1)\to \Sigma ^{\infty}B{\mathbf K}(m)_+\to MT{\mathbf K}(m)$
$$H_*(\Omega^\infty _0MT{\mathbf K}(m+1);\Z /p)\lra H_*(Q_0B{\mathbf K}(m+1)_+;\Z /p)\lra H_*(\Omega^\infty _0MT{\mathbf K}(m);\Z /p)$$
is not short exact.  More precisely, we will show that the map
{on the right}
is not surjective. By naturality of the homology suspension, the following square is commutative,
$$
\begin{diagram}
\node{H_*(Q_0B{\mathbf K}(m+1)_+;\Z /p)} \arrow{e}\arrow{s,t}{\sigma_*^\infty} \node{H_*(\Omega^\infty _0MT{\mathbf K}(m);\Z /p)} \arrow{s,t}{\sigma_*^\infty} \\
\node{H_*(\Sigma ^{\infty}B{\mathbf K}(m+1)_+;\Z /p)} \arrow{e}                                                         \node{H_*(MT{\mathbf K}(m);\Z /p).}
\end{diagram}
$$
Suppose that the top horizontal map
 is onto. By  Lemma \ref{suspensiononto0}. the left vertical map is onto. On the other hand, Lemma \ref{cohomologymt} implies that the bottom horizontal map is trivial.  Combining these, we see that the right vertical map
is trivial.  However, our splitting results imply that $H_*(\Omega _0^\infty MT{\mathbf K}(m);\Z /p))$ contains a tensor factor isomorphic to $H_*(Q_0BG_+;\Z /p)$, on which the homology suspension is nontrivial again by Lemma \ref{suspensiononto0}, which is a contradiction.
\end{proof}

\appendix
\section{\texorpdfstring{Recollection on Lie groups, and characteristic classes}{recollection}}
\label{sec:rec}
In this section, we collect some preliminary materials on classical Lie groups, their cohomology, their extension{.}
We mostly intend to fix our notation.

\subsection{Cohomology of classifying spaces and characteristic classes}
\textcolor{black}{For a moment, let's write $\mathrm{Gr}^{\mathbf{G}}(d,+\infty)$ for the classifying space of $\mathbf{G}(d)$-vector bundles. Often, the notation $B\mathbf{G}(d)=E\mathbf{G}(d)/\mathbf{G}(d)$ denotes the classifying space of a $\mathbf{G}(d)$ which most of the time coincides $\mathrm{Gr}^{\mathbf{G}}(d,+\infty)$. However, for some choices of $\mathbf{G}$, in the case of $d=0$, there are a few exceptions. For instance, for $\mathbf{G}=SO$, $\mathrm{Gr}^{\mathbf{SO}}(d,+\infty)=\cup _kG^+(k,d+k)$ where $G^+(k,d+k)$ is the Grassmann manifold of oriented $d$-codimensional linear subspaces of $\R^{d+k}$, yields $\mathrm{Gr}^{\mathbf{G}}(d,+\infty)=S^0$ which is {\it not} homotopy equivalent to $BSO(0)\simeq B1\simeq *$. This occurs because of existence of $+$ and $-$ orientations for a point. Similarly, $\mathrm{Gr}^{\mathbf{Spin}}(0,+\infty)=B\Z/2\times S^0$ and $\mathrm{Gr}^{\mathbf{SU}}(0,+\infty)=S^1$, do not agree with $BSpin(0)$ and $BSU(0)$, respectively. By abuse of notation, we keep writing $B\mathbf{G}(d)$ for the classifying space of $\mathbf{G}(d)$-vector bundles as we declared in Section $1$.}\\
The following is well-known: the ring structure is given by \cite[Proposition 23.2]{Bo53} for ${\mathbf K}=SO$, Theorem 19.1 loc.cit in other cases. The computation for ${\mathbf K}=SO,O$ with integral coefficient is  \cite[Theorem A, Theorem 12.1]{Thomas}.   The identification of generators with characteristic classes follow, for example, from \cite[Section 9]{BH}.

\begin{thm}\label{cohoclassic}
Let $k$ be any ring if ${\mathbf K}=U,Sp$ or $SU$, an algebra over $\Z/2$ if ${\mathbf K}=O$ or $SO$, $k'$ be a ring in which
$2$ is invertible. Then, for $n\geqslant 1$, $H^*(B{\mathbf K}(n);k)$ is given as follows:
for $n\geq 0$ we have
$$
\begin{array}{lll}
H^*(BO(n);k)& \cong & k[w_1,w_2,\ldots ,w_n]\\
H^*(BU(n);k) & \cong &  k[c_1,c_2,\ldots ,c_n]\\
H^*(BSp(n);k) & \cong & k[p_1,p_2,\ldots ,p_n]\\
\end{array}
$$
and for $n\geq 1$ we have
$$
\begin{array}{lll}
H^*(BSO(n);k) & \cong & k[w_2,\ldots ,w_n] \\
H^*(BSU(n);k) & \cong & k[c_2,\ldots ,c_n]
\end{array}
$$
and further for $m\geq 1$ we have
$$
\begin{array}{lll}
H^*(BSO(2m);\Z) & \cong & \Z[p_1,\cdots p_m,\chi]/(\chi ^2-p_m)\oplus T\\
H^*(BSO(2m);k') & \cong & k'[p_1,\cdots p_m,\chi]/(\chi ^2-p_m)\\
H^*(BSO(2m+1);\Z )\cong H^*(BO(2m+1);\Z ) \cong H^*(BO(2m);\Z )& \cong & \Z [p_1,\cdots p_m] \oplus T\\
H^*(BSO(2m+1);k')\cong H^*(BO(2m+1);k') \cong H^*(BO(2m);k')& \cong & k'[p_1,\cdots p_m]
\end{array}
$$
where $w_i$, the $i$-th Stiefel-Whitney class, has degree $i$,  $c_i$, the $i$-th Chern class, has degree $2i$, and
{$p_i\in H^{4i}(BSp(n);k)$}, the $i$-th symplectic Pontryagin class, $p_i\in H^{4i}(BSO(n);k')$, the $i$-th Pontryagin class, $T$ is
an elementary abelian $2$-group.  Furthermore, the standard inclusions $SO(n)\subset O(n)$
 induce the obvious projections sending $w_1$  to $0$ and other $w_i$'s to $w_i$'s
with $k$ coefficients, and similar statement holds for the standard inclusions $SU(n)\subset U(n)$.
The inclusions $O(n)\subset U(n)$, sends $c_i$ to $w_i^2$ when characteristic of $k$ is 2, otherwise $c_{2i}$ to $p_i$.
\end{thm}
This can be stated in a more economic way by saying that for ${\mathbf K}=O,U$ or $Sp$, $H^*(B{\mathbf K(n);k})\cong
k[x_1,\ldots x_n]$ with the degree of $x_i$ equal to $di$, where $d=1,2$ or $4$ depending on whether ${\mathbf K}=O,U$ or $Sp$,
similarly for $H^*(B{\mathbf SG(n);k})$. Then the standard inclusions ${\mathbf K}(n-1)\subset {\mathbf K}(n)$ induce the obvious
projections sending $x_n$ to $0$, and other $x_i$'s to $x_i$.

As the names suggest, these polynomial generators are characteristic classes, more precisely the characteristic classes for universal
bundles, or the universal characteristic classes.  That is, for example, if $V$ is a real $n$-dimensional vector bundle over the base
space $X$ with classifying map $f:X\rightarrow BO(n)$, that is, $V$ is the pull-back of the universal  $n$-dimensional vector bundle over
$BO(n)$ via $f$, then the $i$-th Stiefel-Whitney class of $V$ is given by $w_i(V)=f^*(w_i)$.

We will need the following property of these classes (the injectivity is given by \cite[Proposition 29.2]{Bo53}, the image of
characteristic classes in \cite[9.1, 9.2, and 9.6]{BH}):
\begin{thm}\label{cohoclassicdetect}
 Let ${\mathbf K}=O,U$ or $Sp$, $k$ be any ring if  ${\mathbf K}=U$ or $Sp$, a $\Z /2$-algebra if ${\mathbf K}=O$, $d$ as above.
  The
usual inclusion $j:{\mathbf K}(1)^n\rightarrow {\mathbf K}(n)$ induces an injection in cohomology, such that we have
$$Bj^*(x_i)=\sigma _i(t_1,\ldots , t_n)\in H^*(B{\mathbf K}(1)^n)\cong k[t_1,\ldots ,t_n]$$
where $t_i$'s have degree $d$, and $\sigma _i $ denotes the $i$-th elementary symmetric polynomial.
\end{thm}

\opt{sc}{
\subsection{Central extension of Lie groups}
Let $\alpha: A\rightarrow H\rightarrow G$ be a central extension of Lie groups with $A$ finite.  That is,
$A$ lies in the centre of $H$ with an isomorphism $H/A\cong G$.  Then we get a principal fibration
$BA\rightarrow BH \rightarrow BG$, which is classified by a map $BG\rightarrow K(A,2)$, which corresponds to
a cohomology class $f(\alpha)\in H^2(BG;A)$.  According to \cite[Chapter IV, Lemma 1.12]{AM},
$f$ is well-defined and bijective when $G$ is finite.  It is easy to see that for $G$ Lie groups, $f$ is still
well-defined.  Unfortunately, the proof of bijectivity given there doesn't generalize to the Lie group case as is.
However, according to \cite[Theorem 4]{Wig}, $H^*(BG;A)$ is isomorphic to the ``Borel cohomology'' of
$G$ with coefficients in $A$, and \cite{MCC} identifies the second Borel cohomology with the set of central extension.
It is now easy to see that our map $f$ coincides with the composition of the two bijections.  Thus we get:

\begin{prop}
 Denote $E(A,G)$ be the set of isomorphism classes of central extension of $G$ by $A$.  Then the above construction gives
a well-defined bijection $f_G:E(A,G)\rightarrow H^2(BG;A)$.
\end{prop}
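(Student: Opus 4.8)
The plan is to avoid adapting the finite-group argument of \cite[Chapter IV, Lemma 1.12]{AM}, whose proof of bijectivity genuinely uses finiteness of $G$ --- the naive substitute, taking $\Omega$ of a fibration $BA\to E\to BG$ classified by a class in $H^2(BG;A)$, only recovers a loop space mapping to $\Omega BG\simeq G$, not a Lie group extension matching $G$ on the nose --- and instead to assemble $f_G$ out of two already-known bijections, then check that the composite agrees with the topologically defined $f_G$.

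First I would settle well-definedness. An isomorphism of central extensions $\alpha\cong\alpha'$, i.e.\ a topological group isomorphism $H\to H'$ that is the identity on $A$ and induces the identity on $G$, yields via functoriality of $B(-)$ a homotopy equivalence $BH\to BH'$ of spaces over $BG$ compatible with the fibrewise $BA$-actions; hence the two classifying maps $BG\to K(A,2)$ agree up to homotopy, so $f(\alpha)=f(\alpha')$ and $f$ descends to a map $f_G\colon E(A,G)\to H^2(BG;A)$. One may also check that $f_G$ intertwines the Baer sum with the group law on $H^2$, but this is not needed for the statement.

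For bijectivity I would invoke two facts. By \cite[Theorem 4]{Wig}, for $G$ a Lie group (hence a locally compact, second countable topological group) there is a natural isomorphism $H^*(BG;A)\cong H^*_{\mathrm{Borel}}(G;A)$ between the singular cohomology of the classifying space and the Borel (measurable) cochain cohomology of the topological group $G$ with coefficients in the discrete module $A$. By \cite{MCC}, the degree-two Borel cohomology $H^2_{\mathrm{Borel}}(G;A)$ is in natural bijection with the set $E(A,G)$ of central extensions of $G$ by $A$ in the category of topological (Lie) groups --- all of which admit Borel cross-sections since $A$ is finite --- the correspondence sending an extension with Borel section $s$ to the class of the $2$-cocycle $(g,h)\mapsto s(g)s(h)s(gh)^{-1}$. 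Composing gives a bijection $E(A,G)\xrightarrow{\sim}H^2_{\mathrm{Borel}}(G;A)\xrightarrow{\sim}H^2(BG;A)$.

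It remains, and this is the only real content, to identify this composite with $f_G$. Here I would use that Wigner's isomorphism is implemented through the simplicial bar construction: the geometric realization of $B_\bullet G$ is $BG$, and the Borel $2$-cocycle $c(g,h)=s(g)s(h)s(gh)^{-1}$ attached to $\alpha$ is exactly the $A$-valued simplicial cocycle on $B_\bullet G$ classifying the simplicial principal $A$-bundle $B_\bullet H\to B_\bullet G$. Geometric realization turns the latter into the principal $A$-bundle, equivalently the $BA$-fibration, $BH\to BG$ that defines $f(\alpha)$; hence the class produced by the composite above is precisely $f(\alpha)$. The main obstacle is therefore this compatibility check: one must match the cocycle-level descriptions underlying \cite{Wig} and \cite{MCC} with the space-level classifying map of $BA\to BH\to BG$, i.e.\ verify that realizing the bar construction of the Lie group extension reproduces the bundle defining $f_G$. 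Once that square commutes, well-definedness together with the two cited bijections shows that $f_G$ is a well-defined bijection, as claimed.
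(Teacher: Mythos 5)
Your proposal is correct and follows essentially the same route as the paper: well-definedness is handled directly, and bijectivity is obtained by composing Wigner's isomorphism $H^*(BG;A)\cong H^*_{\mathrm{Borel}}(G;A)$ with Moore's identification of $H^2_{\mathrm{Borel}}(G;A)$ with central extensions, then checking that the composite agrees with $f_G$. The only difference is that you spell out the compatibility check (via the simplicial bar construction) in more detail than the paper, which simply asserts it is easy to see.
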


An immediate corollary is the following:
\begin{cor}\label{pullbackext}
Let $\alpha _i: A\rightarrow H_i\rightarrow G_i$ be a central extension of Lie groups with $A$ finite
for $i=1,2$, $\varphi :G_1
\rightarrow G_2$ be a Lie group homomorphism. There is a Lie group homomorphism
${\varphi}^{\prime}:H_1\rightarrow H_2$ that makes the following diagram
commutative  if and only if $\varphi ^*(f_{G_2}(\alpha _2))=\alpha _1$.
$$
\begin{diagram}
 \node{H_1}\arrow{e}\arrow{s,r}{\varphi} \node{G_1}\arrow{s,r}{\varphi ^{\prime}}\\
\node{H_2}\arrow{e} \node{G_2}
\end{diagram}
$$
\end{cor}
}
\subsection{\textcolor{black}{$Pin$ groups, $Pin$-bundles, and $Pin$-structures}}\label{pin}
The orthogonal group $O(n)$ admits several double covers, notably we have central extensions
$\Z /2 \rightarrow Pin^+(n)\rightarrow O(n)$ corresponding to $w_2$ and $\Z /2 \rightarrow Pin^-(n)\rightarrow O(n)$
corresponding to $w_2+w_1^2$ in $H^2(BO(n);\Z /2)$. Similarly the special
orthogonal group $SO(n)$ admits a central extension $\Z /2 \rightarrow Spin(n)\rightarrow SO(n)$ corresponding to $w_2$   (\cite[p.434]{KT}).
These groups can also be defined directly using
Clifford algebras \cite{ABS,KTlow,Lam}.

Given a real vector bundle $V$ over $X$, one can ask whether the structure map can be lifted through the {\it canonical projection} $Pin^{\pm}(n)\rightarrow O(n)$.  Such a lift is called $Pin^{\pm}(n)$-bundle structure.  $V$ admits a $Pin^+$ ( $Pin^-$ respectively) structure if and only if $w_2(V)$ ($w_2(V)+w_1(V)^2$ resp.) vanishes (\cite[Lemma 1.3]{KTlow}).  For a $n$-dimensional manifold $M$, we say that $M$ admits a $Pin^{\pm}(n)$ structure if its tangent bundle admits a $Pin^{\pm}(n)$ structure.  Here we note that this is about a factorisation through particular maps $Pin^{\pm}(n)\rightarrow O(n)$.  Thus although as abstract Lie groups, $Pin^+(4n)$ and $Pin^-(4n)$ are isomorphic (c.f. \cite[example 3 in 1.7, pp. 25-27]{CBDWM}{, communicated to us by Theo Johnson-Freyd,)} where they are called $Pin(4n,0)$ and $Pin(0,4n)$), they are not isomorphic as double covers of $O(4n)$, thus the notion of $Pin^+(4n)$ bundle structure and that of $Pin^-(4n)$ structure don't agree.

The following is well-known (e.g. \cite{KT} p.434):
\begin{prop}\label{rpispin}
 $\R P^{4k}$ has a $Pin^{+}$ structure and $\R P^{4k+2}$ has a $Pin^{-}$ structure.
\end{prop}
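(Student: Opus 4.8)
The plan is to reduce the statement to a computation of the first two Stiefel--Whitney classes of $T\R P^{n}$ and then to apply the characteristic-class criterion for $Pin^{\pm}$ structures recorded in the subsection on $Pin$ groups above: a vector bundle $V$ over $X$ admits a $Pin^{+}$ structure precisely when $w_{2}(V)=0$, and a $Pin^{-}$ structure precisely when $w_{2}(V)+w_{1}(V)^{2}=0$ (\cite[Lemma 1.3]{KTlow}). Since a manifold $M$ of dimension $n$ is by definition said to admit a $Pin^{\pm}(n)$ structure when its tangent bundle does, it suffices to evaluate $w_{1}$ and $w_{2}$ of $T\R P^{n}$ for $n=4k$ and $n=4k+2$.

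First I would invoke the well-known stable description of the tangent bundle of real projective space: if $\gamma\to\R P^{n}$ is the tautological line bundle, then $T\R P^{n}\oplus\R\cong\gamma^{\oplus(n+1)}$. Writing $a\in H^{1}(\R P^{n};\Z/2)$ for the generator, so that $w(\gamma)=1+a$, stability of Stiefel--Whitney classes gives $w(T\R P^{n})=(1+a)^{n+1}$, hence $w_{1}(T\R P^{n})=\binom{n+1}{1}a=(n+1)a$ and $w_{2}(T\R P^{n})=\binom{n+1}{2}a^{2}$, all binomial coefficients read mod $2$ (and $a,a^{2}\neq 0$ in $H^{*}(\R P^{n};\Z/2)$ for $n\geqslant 2$). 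Now I would treat the two cases. For $n=4k$ one has $n+1=4k+1$, so $\binom{n+1}{2}=\tfrac{(4k+1)(4k)}{2}=(4k+1)\cdot 2k\equiv 0\pmod 2$, whence $w_{2}(T\R P^{4k})=0$ and $\R P^{4k}$ carries a $Pin^{+}$ structure. For $n=4k+2$ one has $n+1=4k+3$, so $w_{1}(T\R P^{4k+2})=a$ and $\binom{n+1}{2}=\tfrac{(4k+3)(4k+2)}{2}=(4k+3)(2k+1)\equiv 1\pmod 2$, whence $w_{2}(T\R P^{4k+2})=a^{2}$; therefore $w_{2}+w_{1}^{2}=a^{2}+a^{2}=0$ and $\R P^{4k+2}$ carries a $Pin^{-}$ structure.

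There is essentially no genuine obstacle here; the only points needing care are purely bookkeeping ones. One must match the two double covers with the correct classes---$Pin^{+}\leftrightarrow w_{2}$ and $Pin^{-}\leftrightarrow w_{2}+w_{1}^{2}$, rather than the reverse---which is exactly the normalisation fixed in the preceding subsection, and one must verify the two binomial coefficients mod $2$, which can alternatively be seen via Lucas's theorem by comparing the binary expansions of $2$ and of $n+1$. I would also note in passing that the same computation shows $\R P^{n}$ admits a $Pin^{+}$ structure iff $n\equiv 0,3\pmod 4$ and a $Pin^{-}$ structure iff $n\equiv 2,3\pmod 4$, but only the two cases above are needed for the statement.
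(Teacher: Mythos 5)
Your proof is correct and follows essentially the same route as the paper: invoke $T\R P^{n}\oplus\R\cong(n+1)\gamma$, apply the Whitney sum formula to read off $w_{1}=(n+1)a$ and $w_{2}=\binom{n+1}{2}a^{2}$, and then apply the characteristic-class criterion for $Pin^{\pm}$ structures.

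One remark worth making: your intermediate computation is actually the careful one. The paper's own sketch asserts that $w_{2}(T\R P^{4k})=x^{2}$ and $w_{2}(T\R P^{4k+2})=0$, which has the two cases reversed. As you correctly observe, $\binom{4k+1}{2}=(4k+1)\cdot 2k$ is even, so $w_{2}(T\R P^{4k})=0$ (giving $Pin^{+}$ directly), while $\binom{4k+3}{2}=(4k+3)(2k+1)$ is odd, so $w_{2}(T\R P^{4k+2})=a^{2}$ and one must invoke $w_{2}+w_{1}^{2}=a^{2}+a^{2}=0$ to obtain the $Pin^{-}$ structure. With the paper's stated values the two conclusions would come out swapped, contradicting the Proposition it is proving; the paper's final line is therefore a typo, and your version fixes it.
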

{
The proof is left as an exercice to the interested reader.  One can use, for example, the relationship between
the tangent bundle and the canonical line bundle c. f. \cite[Chapter 2, Example 4.8]{Hu}.}
\section{Homology of infinite loop spaces}
For an infinite loop space $X$, since $X\simeq\Omega^2X_2$, the homology $H_*(X;\Z/p)$ is a graded
commutative ring under the Pontryagin product. Moreover, there are Kudo-Araki-Dyer-Lashof homology operations that
we will call Dyer-Lashof operations, $\beta ^{\epsilon}Q^i$ which act on $H_*(X;\Z/p)$. The
operation $Q^i$ is a group homomorphism, is natural with respect to infinite loop maps
\cite[Theorem 1.1]{Maybook}. 
These operations satisfy Adem relations, various Cartan formulae, and Nishida relations \cite[Theorem 1.1]{Maybook}. The algebra
wherein these operations live is the Dyer-Lashof algebra $R$; it is the free algebra generated by these operations, modulo Adem
relations and excess relations. The homology of $H_*(X;\Z/p)$ then becomes an $R$-module. In some cases, these operations allow a neat
description of $H_*(X;\Z/p)$. For instance, if $X=QY$ with $Y$ some path connected space,
then $H_*(X;\Z/p)$ is a free algebra generated by Dyer-Lashof operations on $\widetilde{H}_* (Y;\Z/p)$ (\cite[{Chapter 1,} Lemma 4.10]{Maybook}). Furthermore, when $Y=S^0$, the Dyer-Lashof operations act on the fundamental class of $\widetilde{H}^0(S^0
;\Z /p)$ in such a way that $\{\beta ^{\epsilon}Q^i[1];i\in N, \epsilon =0,1\}$ is precisely the image of $H_*(B\Sigma _p)$ by the
``standard inclusion'' $H_*(B\Sigma _p;\Z /p)\to H_*(QS^0; \Z/p)$.  This latter also coincides with the map induced by the
adjoint of the transfer associated to the inclusion of the trivial group in $\Sigma _p$.

The other cases that we shall consider in this paper, are the spaces $\Z\times BO$ and $\Z\times BU$ which are infinite loop spaces
under Bott periodicity; the monoid structure coming from the Whitney sum is compatible with Bott periodicity.
They correspond to ring spectra $KO$ and $KU$, thus there are maps $QS^0\to \Z \times BO$ and $QS^0\to \Z \times BU$.

We have notably
$$
\begin{array}{lllll}
H^*(BO;\Z/2) &\cong & \Z /2[w_1,w_2, \ldots w_n, \ldots ] & \cong & \varprojlim H^*(BO(n);\Z/2)\\
H_*(BO;\Z/2) &\cong & \Z /2[a_1,a_2, \ldots a_n, \ldots ] & \cong & \varinjlim H_*(BO(n);\Z/2)\\
H^*(BU;\Z) &\cong & \Z [c_1,c_2, \ldots c_n, \ldots ] & \cong & \varprojlim H^*(BU(n);\Z)\\
H_*(BU;\Z) &\cong & \Z [b_1,b_2, \ldots b_n, \ldots ] & \cong & \varinjlim H_*(BU(n);\Z)
\end{array}
$$
The elements $w_i$'s and $c_i$'s are as in Theorem \ref{cohoclassic}.
As $BO$ classifies stable virtual bundles, this means that we can define the Stiefel-Whitney class
$w_i(V)$ for a stable virtual bundle over $X$ with classifying map $f:X\rightarrow BO$ by
$w_i(V)=f^*(w_i)$.  We note that the multiplication by $(-1)$ on the set of virtual bundles corresponds to
the ``multiplication by $(-1)$'' self-map on $BO$, thus the conjugation $\tau$ on $H^*(BO;\Z/2)$ satisfies
$f^*\tau(w_i)=w_i(-V)$.  Similar statements hold for $BU$. {As to the homology is concerned, we will use the fact that the elements
$a_i$'s and $b_i$'s are respectively the image of a generator of $H_i(BO(1);\Z/2)$ and $H_{2i}(BU(1);\Z)$.}\\

The map induced in homology by the unit map was determined in \cite{Priddyoperations}, in particular, we have 
\begin{lmm}\label{QS-BO} \cite[Proposition 4.10, $n=1$ case]{Priddyoperations}
The map $H_*(QS^0;\Z/2)\lra H_*(\Z\times BO;\Z/2)$ is an epimorphism.
\end{lmm}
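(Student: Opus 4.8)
\emph{The plan} is to exhibit a set of homology classes that (a) lies in the image of $(\Omega^{\infty}j)_*$, where $j\colon S^0\to KO$ is the unit map, and (b) generates $H_*(\Z\times BO;\Z/2)$ as a ring under the Pontryagin product. Since $\Omega^{\infty}j$ is an infinite loop map, hence an $H$-map, its image in $\Z/2$-homology is a subring, so such a set of classes suffices. I would write
$$H_*(\Z\times BO;\Z/2)\cong \Z/2[t,t^{-1}]\otimes \Z/2[a_1,a_2,\ldots],$$
the tensor product of the group ring of $\pi_0=\Z$ (with $[n]=t^{n}$ the class of the point in the $n$-th component) with the polynomial algebra $H_*(BO;\Z/2)$, where, as recalled above, $a_i$ is the image under the standard inclusion $BO(1)\hookrightarrow BO$ of the generator $e_i$ of $H_i(BO(1);\Z/2)$. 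On $\pi_0$ the map $(\Omega^{\infty}j)_*$ is the identity $\Z\to\Z$, so every $t^{n}$ ($n\in\Z$) already lies in its image; hence it is enough to produce, for each $i\geqslant1$, a single element $t^{m}\otimes a_i$ in the image (with $m$ possibly depending on $i$), because multiplying by the available class $t^{-m}$ then puts $a_i$ itself in the image, and the subring generated by the $t^{\pm1}$ together with all the $a_i$ is all of $H_*(\Z\times BO;\Z/2)$.

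\emph{To produce the $a_i$} I would use the double-cover transfer. Consider the tautological real line bundle $\gamma_1$ over $BO(1)=B\Z/2=\R P^{\infty}$ and the double cover $S(\gamma_1)=S^{\infty}\to \R P^{\infty}$ that it classifies. Its Becker--Gottlieb transfer is a stable map $\tau\colon \Sigma^{\infty}_{+}\R P^{\infty}\to \Sigma^{\infty}_{+}S^{\infty}\simeq S^0$; passing to the adjoint and restricting to $\R P^{\infty}$ gives a map $\widehat{\tau}\colon \R P^{\infty}\to QS^0$ whose image lies in the component $Q_2S^0$, the degree over a point being the cardinality $2$ of the fibre. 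Composing with $\Omega^{\infty}j$ I would then apply the Becker--Gottlieb formula for the transfer in $K$-theory — the composite of a finite cover's transfer with the unit $S^0\to KO$ represents the bundle of $\R$-valued functions on the fibre — to identify $\R P^{\infty}\xrightarrow{\widehat{\tau}}QS^0\xrightarrow{\Omega^{\infty}j}\{2\}\times BO$ as the map classifying $[\underline{\R}\oplus\gamma_1]=1+[\gamma_1]\in KO^0(\R P^{\infty})$ (the monodromy of the cover being the sign representation). This class has the same image in $\widetilde{KO}^0(\R P^{\infty})$ as $[\gamma_1]$, so $\Omega^{\infty}j\circ\widehat{\tau}$ agrees, up to translation by the component class $t^{2}$, with the standard inclusion $BO(1)\hookrightarrow BO$; since the latter sends $e_i$ to $a_i$ and translation just multiplies by a power of $t$, one gets $(\Omega^{\infty}j\circ\widehat{\tau})_*(e_i)=t^{2}\otimes a_i$ for all $i\geqslant1$. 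These are exactly the classes called for in the first paragraph, so the lemma follows.

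\emph{The main obstacle} is the single non-formal input: the identification $\Omega^{\infty}(j\circ\tau)=[\underline{\R}\oplus\gamma_1]$, i.e.\ that composing a $d$-fold cover's Becker--Gottlieb transfer with the unit $S^0\to KO$ yields the associated ``functions on the fibre'' bundle. This is the classical $K$-theoretic transfer formula, of exactly the type recorded among the properties of (twisted) Becker--Gottlieb transfers developed in the earlier sections; once it is in hand, everything else is bookkeeping with the ring and component structure of $H_*(\Z\times BO;\Z/2)$. As an alternative I could try to avoid the transfer: $a_1$ is in the image because $(\Omega^{\infty}j)_*$ is an isomorphism on $H_1(-;\Z/2)$ (the unit is an isomorphism on $\pi_1$), $(\Omega^{\infty}j)_*$ commutes with the Dyer--Lashof operations $Q^{i}$ (it is an infinite loop map), and $H_*(BO;\Z/2)$ is generated by $a_1$ as an algebra over the Dyer--Lashof algebra; but the last point requires the explicit $Q^{i}$-action on $H_*(BO;\Z/2)$, which is less immediate than the transfer computation above.
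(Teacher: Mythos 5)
Your proposal is correct and is essentially the argument the paper sketches: the paper also reduces to the fact that $H_*(BO;\Z/2)$ is generated by the image of $H_*(BO(1);\Z/2)$, that $H_*(BO(1);\Z/2)$ maps into $H_*(QS^0;\Z/2)$, and that the two maps out of $BO(1)$ are compatible. You simply make the compatibility step precise by identifying the composite $\R P^\infty\to QS^0\to\Z\times BO$, via the double-cover transfer and the $K$-theoretic transfer formula, as the classifying map of $\underline{\R}\oplus\gamma_1$, which is exactly the content hidden behind the paper's word ``compatible.''
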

Basically this is because $H_*(BO;\Z/2)$ is generated by $H_*(BO(1);\Z/2)$,  $H_*(BO(1);\Z/2)$ is ``contained''
in $H_*(Q_0S^0;\Z/2)$, and the inclusions $H_*(BO(1);\Z/2)\subset H_*(\Z\times BO;\Z/2)$ and $H_*(BO(1);\Z/2)\subset
H_*(Q_0S^0;\Z/2)$ are compatible.

Now we would like to generalize to the ``comlex'' case.  Although it is still true that the homology of $BU$ is generated by
that of $BU(1)$, for odd prime $p$ the homology of $BU(1)$ contains elements that are unrelated to the Dyer-Lashof operations.
That is, the homology of $BU(1)$ is the even degree part of the homology of $B\Z/p$, which is much
lagrer than that of $B\Sigma _p$ for an odd prime $p$, which is related to the Dyer-Lashof operations.  However,
as is well-known, (stably) $B\Sigma _p$ splits off $BZ/p$, and it turns out that we can also split $BU(1)$ and $BU$ in
a compatible way, which
which we will discuss below.

Denote by $KU$ the complex $K$-theory spectrum.  \opt{local}{After localising at $p$, }$KU$ splits as $KU\simeq \vee _{i=0}^{p-2}\Sigma ^{2i}E(1)$, where $\pi_*(E(1))\cong \Z _{(p)}[v_1,v_1^{-1}]$ with degree of $v_1$ equal to $2(p-1)$ \cite[Lecture 4]{A}. Denote $j_E$ the resulting splitting map $E(1)\rightarrow KU$. Similarly we have a splitting $\C P^{\infty }_{\opt{local}{(p)}}\cong \vee _{i=0}^{p-2}X(i)$ with $H^*(X(i))=0$ unless $*\equiv  2i.\bmod 2(p-1)$\cite{MNT}.

Since $BU\times \Z_{(p)}$ is the infinite loop space associated to $KU$, it also splits as a product of spaces
$${BU\times\Z_{(p)}\simeq \Pi _{i=0}^{p-2}\Omega ^{\infty}\Sigma ^{2i}E(1).}$$
Thus $\Omega ^{\infty}E(1)$ is a \textcolor{black}{decomposition factor} of $BU\times\Z_{(p)}$.  Its cohomology can be described as follows:

\begin{lmm}Let $k=\Z _{(p)},\Q$ or $\Z /p$.\label{cohe1}
 $H^*(\Omega ^{\infty}E(1),k)\cong k[c_{p-1},c_{2(p-1)},\ldots c_{m(p-1)},\ldots]$, and $j_E^*$ sends $c_{m(p-1)}\in
H^{2m(p-1)}(BU;k)$ to $c_{m(p-1)}$ and other $c_i$'s to $0$.
\end{lmm}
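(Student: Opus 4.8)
The plan is to leverage the Adams splitting $KU_{(p)}\simeq\bigvee_{i=0}^{p-2}\Sigma^{2i}E(1)$ at the level of infinite loop spaces, together with the well-known cohomology of $BU$, to pin down $H^*(\Omega^\infty E(1);k)$ as the subalgebra "detected in degrees $\equiv0\bmod 2(p-1)$". First I would recall that $BU\times\Z_{(p)}=\Omega^\infty KU_{(p)}\simeq\prod_{i=0}^{p-2}\Omega^\infty\Sigma^{2i}E(1)$, so that $\Omega^\infty E(1)$ is the factor indexed by $i=0$ and its cohomology is a retract of $H^*(BU\times\Z_{(p)};k)\cong H^*(\Z_{(p)};k)\otimes k[c_1,c_2,\ldots]$ (Theorem \ref{cohoclassic}). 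Restricting to the basepoint component, $H^*(\Omega^\infty_0 E(1);k)$ is a retract of $k[c_1,c_2,\ldots]$ via the splitting map $j_E:E(1)\to KU_{(p)}$ and its section.

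The key computational input is the action of the grading: the homotopy groups satisfy $\pi_*E(1)\cong\Z_{(p)}[v_1,v_1^{-1}]$ concentrated in degrees divisible by $2(p-1)$, and dually $H^*(X(i))$ is concentrated in degrees $\equiv 2i\bmod 2(p-1)$ (the Adams/MNT splitting of $\C P^\infty$). Since $E(1)$ is the $i=0$ summand, I would argue that the retraction $H^*(BU;k)\to H^*(\Omega^\infty_0 E(1);k)$ kills all Chern classes $c_j$ with $j\not\equiv0\bmod(p-1)$ and is "the identity" on the Chern classes $c_{m(p-1)}$. Concretely: the composite $\C P^\infty\hookrightarrow BU\times\Z_{(p)}\to\Omega^\infty\Sigma^{2i}E(1)$ is, up to the stable splitting, the inclusion of the summand $X(i)$ of $\C P^\infty$; pulling back the tautological class $c_1\in H^2(\C P^\infty)$ through the $i=0$ piece detects exactly the portion of $c_1$ living in $X(0)$, which vanishes unless $1\equiv0\bmod(p-1)$. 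Then one uses multiplicativity and the fact that $H^*(BU;k)$ is polynomial on the $c_j$'s, together with the fact that $c_j$ restricted to $B(U(1)^n)$ is the $j$-th elementary symmetric polynomial in classes pulled back from $\C P^\infty$ factors (Theorem \ref{cohoclassicdetect}), to conclude that $j_E^*(c_{m(p-1)})=c_{m(p-1)}$ and $j_E^*(c_j)=0$ for $j\not\equiv0\bmod(p-1)$, and that these image classes are polynomial generators.

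The main obstacle I expect is bookkeeping the compatibility between the two distinct splittings — the Adams splitting of the spectrum $KU_{(p)}$ and the MNT splitting of the space $\C P^\infty$ — and verifying that under the inclusion $\C P^\infty\to BU$ they correspond degree-by-degree, so that the summand $\Omega^\infty E(1)$ really does "see" precisely the Chern classes in degrees divisible by $2(p-1)$ and nothing else. Once that compatibility is in hand, polynomiality of $H^*(\Omega^\infty_0 E(1);k)$ follows either from the retract structure (a retract of a polynomial algebra that is itself free on the surviving generators) or, at $k=\Z/p$, from the Verschiebung argument recorded earlier in the excerpt, since the Verschiebung is surjective on $H_*(\Omega^\infty E(1);\Z/p)$ as a summand of $H_*(\Z\times BU;\Z/p)$. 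I would phrase the final statement as: $H^*(\Omega^\infty E(1);k)\cong k[c_{p-1},c_{2(p-1)},\ldots]$ with $j_E^*$ acting on Chern classes as stated, completing the proof.
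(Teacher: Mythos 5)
Your argument takes a genuinely different route from the paper's and, unfortunately, has a real gap at its core. The paper first computes the case $k=\Q$ using Proposition \ref{hinfrational} (for a spectrum $X$ with $\pi_{\mathrm{odd}}(X)=0$ one has $H^*(\Omega^\infty_0 X;\Q)\cong Sym_\Q \mathrm{Hom}(\pi_{*>0}X,\Q)$ functorially, and $\pi_*(j_E)$ is an isomorphism in degrees $2m(p-1)$ and zero otherwise), then deduces the $\Z_{(p)}$ case from the torsion-freeness of $H_*(\Omega^\infty E(1);\Z_{(p)})$ as a summand of $H_*(BU\times\Z_{(p)};\Z_{(p)})$, and finally gets $\Z/p$ by the universal coefficient theorem. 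Your route instead tries to detect $j_E^*$ via $\C P^\infty$ and the compatibility of the Adams splitting of $KU$ with the Mimura--Nishida--Toda splitting of $\C P^\infty$.

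The gap is in the step ``pulling back $c_1$ through the $i=0$ piece detects exactly the portion of $c_1$ living in $X(0)$.'' What you actually show is that the composite $\C P^\infty\to BU\times\Z_{(p)}\to\Omega^\infty E(1)\xrightarrow{j_E} BU\times\Z_{(p)}$ kills $c_1$ in $H^2$, i.e., the pullback of $j_E^*(c_1)$ to $\C P^\infty$ vanishes. But this does not give $j_E^*(c_1)=0$: the map $H^*(\Omega^\infty E(1);k)\to H^*(\C P^\infty;k)$ is far from injective (its target is $k[x]$, rank one in each even degree), so vanishing upon restriction to $\C P^\infty$ tells you nothing about the class itself. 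The same objection applies to the proposed splitting-principle upgrade via $BU(1)^n$: you would need the composite $BU(1)^n\to BU\times\Z_{(p)}\to\Omega^\infty E(1)$ to induce an injection on cohomology, and that is never established and is not true. More fundamentally, the compatibility of the two splittings is a statement about stable maps out of $\Sigma^\infty X(j)$; the conclusion you want, that $H^*(\Omega^\infty E(1);\Z/p)$ is concentrated in degrees $\equiv 0\bmod 2(p-1)$, is a statement about the infinite loop \emph{space}, and passing from the first to the second is precisely the nontrivial content of the lemma (it holds over $\Q$ by Proposition \ref{hinfrational} and over $\Z_{(p)}$ and $\Z/p$ only via torsion-freeness and UCT, not by any purely degree-theoretic reasoning). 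Without that sparsity in hand you cannot conclude $j_E^*(c_j)=0$ for $j\not\equiv0\bmod(p-1)$, and your argument for polynomiality (Verschiebung, or ``retract of a polynomial algebra'') would at best give that the answer is polynomial without identifying the generators or the map $j_E^*$.

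To repair the argument along your lines you would essentially need to re-derive the rational statement first (which is what Proposition \ref{hinfrational} supplies) and then descend, which brings you back to the paper's route. I would recommend structuring the proof as: rational case via Proposition \ref{hinfrational}; $\Z_{(p)}$ via torsion-freeness of the summand $H_*(\Omega^\infty E(1);\Z_{(p)})\subset H_*(BU\times\Z_{(p)};\Z_{(p)})$; $\Z/p$ via universal coefficients.
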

\begin{proof}
This can be shown using \cite{HH}, but here we follow rather the arguments in \cite{HR}.
Let's start with the case $k=\Q$.  For $k$-vector spaces $V$, denote by $Sym_k(V)$ the symmetric algebra generated by $V$,
i.e., $\oplus _q V^{\otimes _kq}/\Sigma _q$ with the product induced by the concatenation.
\opt{sc}{By Proposition \ref{hinfrational}, we see} \opt{short}{It is well known} that for any spectra $X$ with
$\pi _{odd}(X)=0$, we have natural isomorphisms
$$H^*(\Omega ^{\infty}_0X;\Q)\cong H^*((\Omega ^{\infty}_0X)_{\Q};\Q)\cong H^*(\Omega ^{\infty}_0(X_{\Q});\Q)\cong
Sym _{\Q}Hom(\pi _{*>0}(X);\Q ).$$
Since $\pi _*(j_E)$ is bijective for $*=2m(p-1)$ and $0$ otherwise, we get the desired result in
this case.
As $\Omega ^{\infty}E(1)$ is a \textcolor{black}{direct factor} of $BU\times \Z_{(p)}$,
$H_*(\Omega ^{\infty}E(1);\Z_{(p)})$ is torsion-free.  Therefore $H_*(\Omega ^{\infty}E(1);\Z_{(p)})$ injects to
$H_*(\Omega ^{\infty}E(1);\Q )$.  Similarly for $BU\times \Z_{(p)}$.  Thus we get the result when $k=\Z_{(p)}$. Finally, one can derive the case $k=\Z /p$ follows from this by the universal coefficient theorem.
\end{proof}
There is another splitting involving the spaces related to them. Consider the orientation map for the $KU$-theory $\C P^{\infty }\to BU$.
Since the target is an infinite loop space, by the adjointness, it factors through $Q\C P^{\infty }$.    Then the map
$Q\C P^{\infty }\to BU$ splits as a map of spaces, that is, there is a space $F$ such that $Q\C P^{\infty }
\simeq BU \times F$ \cite[Theorem]{Seg}.  It turns out that the {Adams'} splittings of $KU$ and $\C P^{\infty }$ are compatible, so that Segal's splitting can be refined to the splitting of corresponding Adams' pieces \cite[Theorem 1.1]{Kono}. We have
\begin{prop}\label{splitkono}
 The map $\Omega ^{\infty}X(0)\to  \Omega ^{\infty}E(1)$ splits, that is we have a space $F^{\prime}$ such that
$\Omega ^{\infty}X(0)\simeq  \Omega ^{\infty}E(1)\times F^{\prime}$.  In particular, it induces a surjection in homology with
any coefficient.
\end{prop}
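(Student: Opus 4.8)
The plan is to read the statement off from the two cited splittings, so the work is mostly a matter of unwinding them with the right conventions. First I would recall that the $K$-theory orientation gives a map of spaces $Q\C P^\infty\to BU$ which, by Segal's theorem \cite[Theorem]{Seg}, admits a homotopy section, whence $Q\C P^\infty\simeq BU\times F$ for some $F$; and by \cite[Theorem 1.1]{Kono} this equivalence may be chosen compatibly with the Adams splittings $\Sigma^\infty\C P^\infty\simeq\bigvee_{i=0}^{p-2}X(i)$ and $KU\simeq\bigvee_{i=0}^{p-2}\Sigma^{2i}E(1)$. The next step is to make precise the sense in which it is compatible: applying the right adjoint $\Omega^\infty$ converts these finite wedges of spectra into products, $Q\C P^\infty\simeq\prod_{i=0}^{p-2}\Omega^\infty X(i)$ and $\Omega^\infty KU\simeq BU\times\Z_{(p)}\simeq\prod_{i=0}^{p-2}\Omega^\infty\Sigma^{2i}E(1)$, and \cite[Theorem 1.1]{Kono} refines Segal's equivalence to a product of equivalences $\Omega^\infty X(i)\simeq\Omega^\infty\Sigma^{2i}E(1)\times F_i$ whose first-factor projection is $\Omega^\infty$ of the orientation restricted to the $i$-th summands. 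Taking $i=0$, so that $\Sigma^0E(1)=E(1)$, yields $\Omega^\infty X(0)\simeq\Omega^\infty E(1)\times F'$ with $F'=F_0$, the projection being precisely the map in the statement.

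The homology assertion is then formal: the projection $\Omega^\infty X(0)\to\Omega^\infty E(1)$ in such a product decomposition has the evident section, which is a right inverse on $H_*(-;k)$ for every coefficient ring $k$, so the map is an epimorphism in homology.

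The main obstacle is genuinely just this bookkeeping around \cite[Theorem 1.1]{Kono}. One has to check that it pairs the summand $X(0)$ of $\C P^\infty$ — whose reduced cohomology sits in degrees $\equiv 0\bmod 2(p-1)$ — with the summand $E(1)$ of $KU$ — whose homotopy sits in those same degrees — rather than with a shift $\Sigma^{2i}E(1)$, and, relatedly, that the base-point components match up so that the displayed equivalence of spaces is literally correct and not merely correct after passing to identity components. Once the indexing conventions are aligned there is nothing further to prove; everything else follows from adjointness together with the cited splittings of Adams and Segal.
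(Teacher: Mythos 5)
Your proposal follows exactly the route the paper takes: the proposition is stated as an immediate consequence of Segal's splitting together with Kono's refinement to the Adams pieces, and the paper gives no further argument beyond the two citations. Your unpacking via $\Omega^\infty$ converting finite wedges to products, and your caution about indexing conventions and base-point components, is precisely the bookkeeping the paper leaves implicit.
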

Now we are ready to prove the following.
\begin{lmm}
The unit map of the ring spectrum $E(1)$ induces a surjection $H_*(QS^0;\Z /p)\to H_*(\Omega ^{\infty}E(1);\Z /p)$.
\end{lmm}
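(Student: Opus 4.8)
The statement to prove is the "complex analogue" of Lemma \ref{QS-BO}: the unit map $S^0 \to E(1)$ induces a surjection $H_*(QS^0; \Z/p) \to H_*(\Omega^\infty E(1); \Z/p)$ after $p$-localisation at an odd prime. The natural strategy is to factor the unit map through spaces whose homology we understand, and reduce to the already-established surjectivity statements. First I would recall the chain of maps: the $KU$-orientation $\C P^\infty \to BU$ factors through $Q\C P^\infty$ (adjointness, since $BU$ is an infinite loop space), and Segal's splitting together with Kono's refinement (Proposition \ref{splitkono}) gives that $\Omega^\infty X(0) \to \Omega^\infty E(1)$ is a split surjection, hence surjective on homology with any coefficients. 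So it suffices to produce, compatibly, a surjection onto $H_*(\Omega^\infty X(0); \Z/p)$ coming from $H_*(QS^0;\Z/p)$ — or, more economically, to show that the composite $QS^0 \to \Omega^\infty E(1)$ induced by the unit hits everything.

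**Key steps, in order.** (1) Observe that the unit map $\Sigma^\infty S^0 \to E(1)$ of the ring spectrum $E(1)$ on infinite loop spaces is $QS^0 \to \Omega^\infty E(1)$, and that it factors as $QS^0 \to \Omega^\infty X(0) \to \Omega^\infty E(1)$, where the second map is Kono's split surjection: indeed the unit of $E(1)$ is compatible with the unit of $KU$ under $j_E$, and the Segal map $Q\C P^\infty \to BU$ followed by Adams' projection to $\Omega^\infty E(1)$ realises the unit, the point being that the bottom cell of $\C P^\infty$ carries the relevant information. (2) By Proposition \ref{splitkono}, $\Omega^\infty X(0) \to \Omega^\infty E(1)$ is onto in $\Z/p$-homology, so it is enough to show $H_*(QS^0;\Z/p) \to H_*(\Omega^\infty X(0);\Z/p)$ is onto — equivalently that $H_*(Q\C P^\infty;\Z/p) \to H_*(\Omega^\infty X(0);\Z/p)$, coming from the Segal splitting, is onto, since $Q\C P^\infty$ maps to $QS^0$-related data through the $0$-cell... here one must be slightly careful, as $\C P^\infty$ is not $S^0$. (3) The clean route: by Lemma \ref{cohe1}, $H^*(\Omega^\infty E(1);\Z/p) \cong \Z/p[c_{p-1}, c_{2(p-1)}, \ldots]$ is a polynomial algebra, and by the Kudo–Araki / Dyer–Lashof description $H_*(QS^0;\Z/p)$ is large; it suffices to check that the algebra generators $c_{m(p-1)}$ (or rather their homology duals) are in the image. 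This reduces to a statement about a single "generating class": the generator of $H_{2(p-1)}(\Omega^\infty E(1);\Z/p)$ dual to $c_{p-1}$ must be hit, and then apply Dyer–Lashof operations $Q^I$ (which commute with the infinite loop map $QS^0 \to \Omega^\infty E(1)$, being natural) together with the Pontryagin product to generate everything, exactly as in the proof of Lemma \ref{QS-BO} where $H_*(BO(1);\Z/p)$ plays this role. For $E(1)$ the analogous "bottom" class comes from $H_{2(p-1)}(X(0);\Z/p)$, which is the image of the appropriate power of the generator of $H_*(\C P^\infty;\Z/p)$ under the splitting $\C P^\infty \simeq \vee X(i)$, and this class lies in the image of $H_*(Q\C P^\infty;\Z/p) \to H_*(\Omega^\infty X(0);\Z/p)$ tautologically.

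**The main obstacle.** The crux is Step (3): verifying that $H_*(QS^0;\Z/p) \to H_*(\Omega^\infty E(1);\Z/p)$ surjects onto the polynomial (sub)algebra, i.e. that a single generating class in degree $2(p-1)$ is in the image and that Dyer–Lashof operations plus products then generate the rest. The subtlety is that the source is $QS^0$ (homology of the sphere), not $Q\C P^\infty$, so one cannot directly invoke the Segal factorisation to get the degree-$2(p-1)$ class for free; instead one must trace through that the unit $QS^0 \to \Omega^\infty E(1)$ hits $H_{2(p-1)}$ — which holds because $\pi_{2(p-1)}(E(1)) \cong \Z_{(p)}$ is generated by $v_1$, which is in the image of the Hurewicz-type map from $\pi_{2(p-1)}^s \to \pi_{2(p-1)}(E(1))$ (the first $p$-torsion-free reduced class, related to $\alpha_1$ or rather its non-torsion shadow), so the generator of $H_{2(p-1)}(\Omega^\infty E(1);\Z/p)$ is detected in the image of $H_{2(p-1)}(QS^0;\Z/p)$. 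Once this single class is in place, naturality of $Q^i$ under the infinite loop map $QS^0 \to \Omega^\infty E(1)$ and the polynomial structure from Lemma \ref{cohe1} finish the argument by the same bookkeeping as in Lemma \ref{QS-BO}; I would phrase the conclusion as: the surjectivity on this bottom class, the naturality of Dyer–Lashof operations, and the fact that $H^*(\Omega^\infty E(1);\Z/p)$ is polynomial on $c_{m(p-1)}$ together imply the map is an epimorphism.
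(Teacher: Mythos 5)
Your proposal correctly identifies the two structural ingredients the paper uses — Kono's split surjection $\Omega^\infty X(0)\to\Omega^\infty E(1)$ in $\Z/p$-homology (Proposition \ref{splitkono}) and the strategy of hitting a generating set and then appealing to naturality of Dyer--Lashof operations — but the reduction you make in the middle does not close, and the justification you offer for the step that is supposed to close it is incorrect.

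First, the factorisation in your step (1) is not available: the unit $QS^0\to\Omega^\infty E(1)$ does not factor through $\Omega^\infty X(0)$. The map $\Omega^\infty X(0)\to\Omega^\infty E(1)$ arises from restricting the $KU$-orientation $\C P^\infty\to BU$ to Adams summands, which is unrelated to the unit of $KU$; since $X(0)$ is $(2(p-1)-1)$-connected there is in any case nothing in $\pi_0$ for the unit to land in. This is the ``slight carefulness'' you flag, and it is the whole difficulty.

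Second, the reduction to a \emph{single} class in degree $2(p-1)$ in your step (3) fails on degree grounds. Over $\Z/p$ with $p$ odd, if $x$ has degree $2(p-1)$ then $Q^i x\neq 0$ forces $2i\geq 2(p-1)$, i.e.\ $i\geq p-1$, so that $Q^i x$ lives in degree $2(p-1)(i+1)\geq 2p(p-1)$. Products only raise degree. Hence Dyer--Lashof operations and Pontryagin products applied to the degree-$2(p-1)$ class cannot produce, for instance, the generator $b_{2(p-1)}$ in degree $4(p-1)$. The analogy with Lemma \ref{QS-BO} is misleading precisely here: in that proof what is used is the image of the \emph{whole} of $H_*(BO(1);\Z/2)$, an infinite-dimensional space identified with $\{Q^i[1]\}_{i\geq 0}$ via $B\Sigma_2\to QS^0$, not a single bottom class.

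Third, the homotopy-theoretic reason you offer for hitting $H_{2(p-1)}$ is wrong: $\pi_{2(p-1)}^s\otimes\Z_{(p)}=0$ for $p$ odd (the class $\alpha_1$ lives in degree $2p-3=2(p-1)-1$, not $2(p-1)$), so the map $\pi_{2(p-1)}^s\to\pi_{2(p-1)}(E(1))\cong\Z_{(p)}$ is zero; $v_1$ has no preimage. Hitting a homology class does not require hitting a homotopy class, but the stated reason is not valid.

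The missing idea is the paper's use of $B\Sigma_p$: the even part of $\widetilde H_*(B\Sigma_p;\Z/p)$ is isomorphic to $\widetilde H_*(X(0);\Z/p)$, via the commutative square relating $B\Z/p\to\C P^\infty$ with $B\Sigma_p\to X(0)$, and under $B\Sigma_p\to QS^0$ these classes are exactly the Dyer--Lashof operations applied once to the fundamental class $[1]\in H_0(QS^0;\Z/p)$ (up to translation of component). That exhibits the entire infinite family of bottom generators of $H_*(\Omega^\infty X(0);\Z/p)$, not just the lowest one, as lying in the image of $H_*(QS^0;\Z/p)$; combined with Kono's surjection and the fact that $H_*(\Omega^\infty X(0);\Z/p)$ is generated by $H_*(X(0);\Z/p)$ under products and Dyer--Lashof operations, the surjectivity follows.
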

\begin{proof}
By Proposition \ref{splitkono} the map $H_*(\Omega ^{\infty}X(0);\Z /p)\to H_*(\Omega ^{\infty}E(1);\Z /p)$ is surjective. However, as $X(0)$ is a retract of a space $\C P^{\infty }$,  $H_*(\Omega ^{\infty}X(0);\Z /p)$ is generated by elements of $H_*(X(0);\Z /p)$ under the Pontrjagin product and Dyer-Lashof operations.  But  $H_*(X(0);\Z /p)$ is the even degree part of $H_*(B\Sigma _p;\Z/p)$, that is we have a following commutative diagram
$$\begin{diagram}
\node{B\Z/p}\arrow{e}\arrow{s} \node{\C P^{\infty}}\arrow{s}\\
\node{B\Sigma _p}\arrow{e} \node{X(0)}
  \end{diagram}
$$
so that the horizontal arrows kill $H_{odd}(-;\Z/p)$ and induce isomorphisms on $H_{even}(-;\Z/p)$. So the  image of $H_*(X(0);\Z/p)$ in $H_*(\Omega ^{\infty}E(1);\Z /p)$ is just Dyer-Lashof operations applied to the image of the fundamental class of $H_*(QS^0;\Z /p)$ in $H_*(\Omega ^{\infty}E(1);\Z /p)$, up to translation by connected component. Thus $H_*(\Omega ^{\infty}E(1);\Z /p)$ is generated by the image of the fundamental class of $H_*(QS^0;\Z /p)$ under the Pontrjagin product and Dyer-Lashof operations.  In other words, the map $H_*(QS^0;\Z /p)\to H_*(\Omega ^{\infty}E(1);\Z /p)$ is surjective.

Alternatively, this can be deduced from \cite[Proposition 4.6]{Priddyoperations} and Proposition \ref{cohe1}.
\end{proof}

\tb{Acknowledgements.}
We are grateful to Haynes Miller for helpful correspondences and to Oscar Randal-Williams for helpful correspondences and his
comments on earlier versions of this paper. We also thank the referees of various journals for their critical readings. The first author
thanks IPM and University of Tehran for their hospitality.  The second author thanks Institut Fourier for its hospitality during {visits
during June 2013 and October 2014}.

\end{document}